\documentclass[
11pt,  reqno]{amsart}
\usepackage{amsmath,amssymb,amscd,amsthm}

\usepackage{color}
\usepackage{hyperref}

\usepackage{cases}

\usepackage{bm}


\allowdisplaybreaks[3]

\newtheorem{theorem}{Theorem} [section]

\newtheorem{lemma}[theorem]{Lemma}
\newtheorem{proposition}[theorem]{Proposition}
\newtheorem{remark}[theorem]{Remark}
\newtheorem{definition}[theorem]{Definition}

\DeclareMathOperator*{\supp}{supp}

\newcommand{\Z}{\mathbb{Z}}
\newcommand{\R}{\mathbb{R}}
\newcommand{\C}{\mathbb{C}}
\newcommand{\T}{\mathbb{T}}

\newcommand{\F}{\mathcal{F}}

\newcommand{\al}{\alpha}
\newcommand{\be}{\beta}
\newcommand{\dl}{\delta}

\newcommand{\eps}{\varepsilon}

\newcommand{\G}{\Gamma}

\newcommand{\s}{\sigma}

\newcommand{\ft}{\widehat}
\newcommand{\Ft}{{\mathcal{F}}}
\newcommand{\wt}{\widetilde}
\newcommand{\cj}{\overline}
\newcommand{\dx}{\partial_x}

\newcommand{\dt}{\partial_t}


\newcommand{\pd}{\partial}

\newcommand{\ta}{\theta}

\renewcommand{\l}{\ell}

\newcommand{\les}{\lesssim}
\newcommand{\ges}{\gtrsim}

\newcommand{\jb}[1]
{\langle #1 \rangle}

\newcommand{\up}{\nearrow}
\newcommand{\down}{\searrow}

\newcommand{\ind}{\mathbf 1}

\newcommand{\D}{\mathcal{D}}
\newcommand{\Ml}{\mathcal{M}}
\newcommand{\Nl}{\mathcal{N}}

\newcommand{\X}{\mathcal{X}}

\newcommand{\N}{\mathbb{N}}

\newcommand{\CC}{\mathfrak C}

\newcommand{\fm}{\mathfrak m}

\newcommand{\I}{\mathrm{I}}
\newcommand{\II}{\mathrm{I \! I}}
\newcommand{\III}{\mathrm{I \! I \! I}}

\newcommand{\dual}[2]{\langle #1 , #2 \rangle}
\newcommand{\Dual}[2]{\Big \langle #1 , #2 \Big \rangle}

\newcommand{\floor}[1]{\lfloor #1 \rfloor}

\newcommand{\ceil}[1]{\lceil #1 \rceil}

\newcommand{\Ceil}[1]{\Big \lceil #1 \Big \rceil}

\numberwithin{equation}{section}
\numberwithin{theorem}{section}

\let\Re=\undefined\DeclareMathOperator*{\Re}{Re}

\title[Norm inflation for quadratic derivative fNLS]
{Norm inflation for quadratic derivative fractional nonlinear Schr\"odinger equations}

\author[T.~Kondo and M.~Okamoto]
{Toshiki Kondo and Mamoru Okamoto}

\address{
Toshiki Kondo\\
Department of Mathematics\\\
Graduate School of Advanced Science and Engineering\\
Hiroshima University\\
1-3-1 Kagamiyama,
Higashi-Hiroshima, 739-8526
Japan}
\email{d263049@hiroshima-u.ac.jp}

\address{
Mamoru Okamoto\\
Department of Mathematics\\\
Graduate School of Advanced Science and Engineering\\
Hiroshima University\\
1-3-1 Kagamiyama,
Higashi-Hiroshima, 739-8526
Japan}
\email{mokamoto@hiroshima-u.ac.jp}

\subjclass[2020]{35Q55}

\keywords{fractional Schr\"odinger equation;
well-posedness;
ill-posedness;
norm inflation;
infinite loss of regularity}

\begin{document}

\begin{abstract}
We consider the Cauchy problem for quadratic derivative fractional nonlinear Schr\"odinger equations on $\R$ or $\T$.
We determine the sharp exponents of the fractional derivatives for which the Cauchy problem is well-posed in the Sobolev space.
Thanks to the global well-posedness result established by Nakanishi and Wang (2025),
we can expand the solution as a sum of iterated terms.
By deriving estimates for each iterated term,
we establish norm inflation with infinite loss of regularity,
which in particular implies ill-posedness.
\end{abstract}

\maketitle

\tableofcontents

\newpage

\section{Introduction}

We consider the following Cauchy problem for the quadratic derivative fractional nonlinear Schr\"odinger equation (fNLS): 
\begin{equation}
\left\{
\begin{aligned}
&\dt u + i D^\al u = u D^\be u,
\\
&u|_{t=0} = \phi,
\end{aligned}
\right.
\label{dNLS}
\end{equation}
where $\al, \be > 0$, $u : \R \times \Ml \to \C$ is an unknown function, and $\phi : \Ml \to \C$ is a given function.
Here, $D^\al := \Ft^{-1} | \xi |^\al \Ft$
and
$\Ml$ denotes $\R$ or $\T := \R /(2\pi \Z)$.
The goal in this paper is to determine the sharp exponents $\al, \be$ for which \eqref{dNLS} is well-posed
in the Sobolev space $H^s(\Ml)$ for large $s \in \R$.

When the nonlinearity in \eqref{dNLS} has no derivative ($\be=0$):
\begin{equation}
\left\{
\begin{aligned}
&\dt u + i D^\al u = u^2,
\\
&u|_{t=0} = \phi,
\end{aligned}
\right.
\label{qNLS}
\end{equation}
the standard contraction mapping theorem implies that the Cauchy problem is well-posed in $H^s(\Ml)$ for $s>\frac 12$,
since the embedding
$H^s(\Ml) \hookrightarrow L^\infty(\Ml)$ holds.%
\footnote{%
Many researchers have also studied well-posedness in spaces of low regularity.
In particular, sharp well-posedness for \eqref{qNLS} on $\R$ is obtained in \cite{BeTa06,Kis19} for $\al=2$
and \cite{RLY18} for $\al=4$.
However, since this paper focuses on the exponents $\al$ and $\be$ for which the Cauchy problem is well-posed or ill-posed even in Sobolev spaces of high regularity,
we do not discuss in detail the regularity conditions under which well-posedness holds.
}
In particular, in this case there is no distinction between the problems posed on $\R$ and $\T$.
However, for $\be>0$, there is in general a difference in well-posedness in Sobolev spaces, even at high regularity, between $\R$ and $\T$.

\subsection{On the real-line}
\label{subsec:R}

First, we consider the Cauchy problem for the higher-order semi-linear Schr\"odinger equation:
\begin{equation}
\left\{
\begin{aligned}
&\dt u + i (-\dx^2)^j u = \Nl (u, \dots, \dx^{2j-1} u,
\cj u, \dots, \cj{\dx^{2j-1} u}),
\\
&u|_{t=0} = \phi,
\end{aligned}
\right.
\label{hNLS}
\end{equation}
where $j \in \N$ and $\Nl$ is a polynomial.
Kenig, Ponce, and Vega \cite{KPV93} proved that \eqref{hNLS} with $j=1$ is well-posed in $H^s(\R)$ for $s \ge \frac 72$,
provided that the degree of $\Nl$ is at least three;
see also \cite{Por18,HIT21}.
By combining the local smoothing estimate (Theorem 4.1 in \cite{KPV91}; see also Theorem \ref{thm:locs}) with the maximal function estimate (Corollary 2.9 in \cite{KPV91b}; see also Theorem \ref{thm:KVP2}),
their argument extends to all $j \ge 2$.
In particular, for any $j \in \N$, the equation \eqref{hNLS} is well-posed in $H^s(\R)$ for some (large) $s \in \R$,
provided that $\deg \Nl \ge 3$.
In this regime, apart from the requirement $\deg \Nl \ge 3$,
the well-posedness in $H^s(\R)$ for large $s$ is independent of the specific form of the nonlinearity $\Nl$.
See also \cite{KPV04,LP02,HGM22} for quasi-linear cases.

On the other hand,
if $\Nl$ contains a quadratic derivative term, the situation changes.
In fact, Christ \cite{Chr03} showed that the Cauchy problem
\begin{equation}
\left\{
\begin{aligned}
&\dt u - i \dx^2 u = u \dx u,\\
&u|_{t=0} = \phi
\end{aligned}
\right.
\label{dNLSC}
\end{equation}
is ill-posed in $H^s(\R)$ for any $s \in \R$.
Moreover,
Gr\"unrock \cite{Gr00} showed that the Cauchy problem
\[
\left\{
\begin{aligned}
&\dt u - i \dx^2 u = \cj u \dx \cj u,
\\
&u|_{t=0} = \phi
\end{aligned}
\right.
\]
is well-posed in $H^s(\R)$ for $s \ge 0$.

However, by working in a suitable function space, one can obtain well-posedness for \eqref{hNLS} even in the presence of quadratic derivative nonlinearities.
In fact, Kenig, Ponce, and Vega \cite{KPV93} proved that \eqref{hNLS} with $j=1$ is well-posed in an appropriate weighted Sobolev space when $\deg \Nl \ge 2$.
Moreover, Ozawa \cite{Oz98} showed that \eqref{dNLSC} is well-posed in the space
\[
\Big\{ f \in H^s(\R) \mid \sup_{x \in \R} \Big| \int_0^x f(y) dy \Big| < \infty \Big\}
\]
for $s \ge 1$.
Note that this additional condition is reminiscent of the Mizohata condition appearing in \cite{Mizo85}.
See also \cite{Mizu06,Tar11} for a fourth-order analogue of the Mizohata condition.

Recently, Nakanishi and Wang \cite{NaWa25} proved that the Cauchy problem for very general systems of evolution equations is globally well-posed in the space of distributions supported on the Fourier half-space, denoted by $\Ft^{-1} \X$.
Here, $\X$ is defined as follows.
 
\begin{definition}
\label{def:X}
Let $\D(\R)$ be the space of smooth functions compactly supported on $\R$.
We define $\X$ to be the space of $F \in \D'(\R)$ for which there exist $a \in (0,\infty)$ and a $\C$-valued Borel measure $\fm$ on $\R$ such that
\[
F = \fm \text{ on } (-\infty,a)
\quad
\text{and}
\quad
|\fm|((-\infty,0])=0,
\]
where $|\fm|$ stands for the total variation of $\fm$.
\end{definition}

A more precise definition of $\Ft^{-1} \X$, together with its properties, will be given in Section \ref{sec:NaWa}.
Note that their result \cite{NaWa25} applies in particular to \eqref{dNLS}.

In this paper, we determine the sharp exponents of the fractional derivatives for which the Cauchy problem \eqref{dNLS} is well-posed in Sobolev spaces.
The following theorem is the main result in this paper.

\begin{theorem}
\label{thm:ill}
When $\Ml = \R$,
let $\al, \be \in \R$ satisfy
$\al>0$ and
\begin{equation}
\be
>
\max \Big( \frac{\al-1}2, 0 \Big).
\label{cond:be}
\end{equation}
Let $s, \s \in \R$.
Then, for any $0< \eps \ll 1$, there exist
an initial datum $\phi \in H^s(\R)$ with $ \| \phi \|_{H^s} < \eps$ and $\supp \ft \phi \subset [0, \infty)$
and a time  $T \in (0,\eps)$
such that
the corresponding solution $u \in C([0,T]; \Ft^{-1} \X)$ to \eqref{dNLS} satisfies
\[ 
\| u(T) \|_{H^\s} > \eps^{-1}.
\]
\end{theorem}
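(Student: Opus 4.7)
The plan is a Picard iteration analysis in the spirit of Bejenaru--Tao and Iwabuchi--Ogawa, using the Nakanishi--Wang global existence in $\F^{-1}\X$ as the convergence framework. I would first use \cite{NaWa25} to decompose the solution as the Picard series $u(t) = \sum_{n \ge 1} U_n(t)$ with $U_1(t) = e^{-itD^\al}\phi$ and
\[
U_n(t) = -i \sum_{k+\ell = n} \int_0^t e^{-i(t-s)D^\al}\bigl(U_k(s) D^\be U_\ell(s)\bigr)\,ds \quad (n \ge 2).
\]
The condition $\supp \widehat{\phi} \subset [0,\infty)$ propagates, so every $\supp \widehat{U_n}(t,\cdot) \subset [0,\infty)$; on this half-space the phase function collapses to $\Phi(\xi_1,\xi_2) = (\xi_1+\xi_2)^\al - \xi_1^\al - \xi_2^\al$ and the derivative symbol to $\xi^\be$, with no absolute-value or sign ambiguity.

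I would then construct $\phi = \phi_N$ as a carefully tuned superposition of Fourier bumps supported at positive frequencies at scale $N \gg 1$, possibly at several disparate scales, with coefficients normalized so that $\|\phi_N\|_{H^s} \lesssim \eps$. The time $T = T(N) \in (0,\eps)$ is chosen so that $T|\Phi| = O(1)$ on the interaction region of a distinguished iterate $U_{n_*}$, making the Duhamel factor $(e^{iT\Phi}-1)/(i\Phi)$ as large as possible, namely of order $\min(T, |\Phi|^{-1})$. In Fourier space, $\widehat{U_{n_*}}(T,\cdot)$ is an $(n_*-1)$-fold integral whose integrand contains $n_*$ bump factors from $\widehat{\phi}$, $n_*-1$ derivative symbols contributing powers of $N^\be$, and $n_*-1$ Duhamel factors each contributing a negative power of $N$. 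Optimizing the scales, the condition $\be > \max((\al-1)/2, 0)$ emerges as exactly what is needed to make the resulting exponent of $N$ in $\|U_{n_*}(T)\|_{H^\s}$ positive, so that $N \to \infty$ forces $\|U_{n_*}(T)\|_{H^\s} > \eps^{-1}$ for an appropriate $n_* = n_*(s, \s, \al, \be)$.

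The main obstacle is to show that $U_{n_*}$ genuinely dominates the remaining contributions $\sum_{m \neq n_*} U_m(T)$ in $H^\s$. Two error sources must be controlled: the lower-order iterates $m < n_*$, for which I would need explicit $m$-linear estimates showing that they are either absent on our specific initial data or concentrate at frequencies where their $H^\s$-mass is negligible; and the tail $\sum_{m \ge M} U_m(T)$, whose absolute convergence follows from the Nakanishi--Wang a priori bounds in $\F^{-1}\X$ once $\eps$ and $N$ are tuned so that $\|\phi\|_{\F^{-1}\X}$ is small. Isolating a single dominant interaction in the tree expansion, while keeping $\supp \widehat{\phi} \subset [0,\infty)$ and achieving infinite loss of regularity uniformly in the target $\s$, is the heart of the technical work; the threshold $\be = (\al-1)/2$ should be sharp for this mechanism, matching the well-posedness range established elsewhere in the paper.
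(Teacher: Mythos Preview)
Your overall strategy---Picard expansion via the Nakanishi--Wang framework, two-scale bump initial data, and isolation of a dominant iterate $U_{n_*}$---matches the paper. But two concrete mechanisms in your proposal diverge from what actually works, and both are genuine gaps rather than cosmetic differences.

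First, the phase/Duhamel mechanism. You propose to tune $T$ so that $T|\Phi|=O(1)$ and harvest a factor $\min(T,|\Phi|^{-1})$ from each Duhamel integration, with the threshold $\be>(\al-1)/2$ emerging from balancing the gain $N^\be$ against a negative power of $N$ coming from the phase. That is not how the sharp threshold arises here. The paper places the low-frequency bump at scale $N^{-\ta}$ (not $\sim 1$) with $\ta>\al-1$, which forces the resonance function $\Phi(\xi,\eta)=\xi^\al-(\xi-\eta)^\al-\eta^\al$ to be $O(1)$ on the interaction set; the Duhamel factor is then simply $T=1/\log N$, a logarithmic (not polynomial) loss. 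The net gain per iteration is $N^{\be-\ta/2}$, where the loss $N^{\ta/2}$ comes from the $L^2$-normalization of a bump of width $N^{-\ta}$. One needs $\ta<2\be$ for this to be a gain and $\ta>\max(0,\al-1)$ for the phase to be harmless; the compatibility of these two constraints is exactly $\be>\max\bigl(\frac{\al-1}{2},0\bigr)$. A direct high--high or unit-scale low--high construction of the type you sketch gives the cruder threshold $\be>\al-1$.

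Second, and more seriously, your tail control is incomplete. Nakanishi--Wang gives convergence of $\sum_m U_m$ in $\F^{-1}\X$, but $\F^{-1}\X$ does not embed into $H^\s$ for any $\s$, so absolute convergence there does not bound $\bigl\|\sum_{m\ge M}U_m(T)\bigr\|_{H^\s}$. The paper avoids this entirely by a \emph{support-separation} argument: with the two-bump data $\phi=\phi_1+\phi_2$ ($\widehat\phi_1$ near $N^{-\ta}$, $\widehat\phi_2$ near $N$), one shows (Lemma~\ref{lem:suppuk}) that $\supp\Ft[u^{(l)}](t,\cdot)$ lies in an explicit union of intervals, so that at a target frequency $\xi\in[N+(k-1)N^{-\ta},\,N+kN^{-\ta})$ only \emph{finitely many} iterates contribute---namely $u^{(l)}$ for $\lfloor k/2\rfloor\le l\le k$, together with the pure-$\phi_1$ iterates $u_1^{(l)}$ with $l\sim N^{\ta+1}$ (see \eqref{inter}). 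The latter are a genuine additional error term you do not mention: these very-high-order iterates of the low bump reach frequency $\sim N$ and must be bounded separately (Proposition~\ref{prop:u1k}), which is where the extra constraint $\ta>\frac{2}{3}(\be-1)$ enters. Without this support argument and the explicit control of the $u_1^{(l)}$ terms, there is no way to pass from ``$U_{n_*}$ is large'' to ``$u(T)$ is large'' in $H^\s$.
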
 

Theorem \ref{thm:ill} establishes norm inflation with infinite loss of regularity, which implies that \eqref{dNLS} is ill-posed in Sobolev spaces whenever \eqref{cond:be} holds.
This generalizes the result of Christ \cite{Chr03}.
Moreover, the condition \eqref{cond:be} is optimal.
Indeed, if $\al$ and $\be$ satisfy
\[
\al>0, \quad
0 \le \be \le
\max \Big( \frac{\al-1}2, 0 \Big).
\]
then \eqref{dNLS} is well-posed in Sobolev spaces.
In other words, on the real line, the local smoothing effect allows one to recover up to $\frac{\alpha - 1}{2}$ derivatives in the quadratic nonlinearity.
The case $\beta = 0$  was mentioned in \eqref{qNLS} before Subsection \ref{subsec:R}.
Although the well-posedness for $\beta > 0$ can be derived from the linear estimates in \cite{KPV91,KPV91b},
we include a proof in Appendix \ref{app:WP} for completeness.

For the proof of Theorem \ref{thm:ill},
we use the global well-posedness in $\Ft^{-1} \X$.
The result in \cite{NaWa25} (see Theorem \ref{thm:wel} below) yields that
the solution $u$ to \eqref{dNLS} can be expanded the sum of iterated terms;
see \eqref{iter0}.
By choosing appropriate initial data $\phi$,
we can prove that the second iterate  $u^{(2)}$ is the leading term.
This yields norm inflation, that is, Theorem \ref{thm:ill} with $\s=s$.
To obtain norm inflation with infinite loss of regularity,
however,
an estimate for $u^{(2)}$ alone is not sufficient.
For a given $\sigma \in \R$,
we instead take a (large) $k \in \N$ and need to prove that the $k$-th iterate $u^{(k)}$ is the leading term in the expansion \eqref{iter0}.
In this setting, it is necessary to extract the main contributing part from each iterated term;
see Proposition \ref{prop:Ftuk}.

The most problematic part of the nonlinear term in \eqref{dNLS} is the interaction in which a low frequency and a high frequency generate an even higher frequency.
On $\T$, one may take the zero mode as the low frequency, and the linear term can then be regarded as the dominant contribution;
we discuss this in more detail in the next subsection.
On the other hand, on $\R$, one must treat a low-frequency component of nonzero width,
and it becomes necessary to estimate each term in the iterated expansion arising from this component.
See \eqref{inter} and \eqref{ukHs2}.

\begin{remark}
\label{rem:dx1}
\rm
The same conclusion as in Theorem \ref{thm:ill} remains valid if, in \eqref{dNLS}, we replace
$iD^\al$ and $D^\be$ by $D^{\al-1} \dx$ and $i D^{\be-1} \dx$, respectively.
In particular,
norm inflation with infinite loss of regularity occurs
for the Cauchy problem for the following equations:
\begin{enumerate}
\item
the $\C$-valued fractional KdV-type equation
\[
\dt u + D^{\al-1} \dx u = u D^\be u
\]
for \eqref{cond:be}.

\item
the quadratic higher-order nonlinear Schr\"odinger equation
\[
\dt u + i (-\dx^2)^j u = u \dx^k u
\]
for $j,k \in \N$ with $j \le k$.
\end{enumerate}
\end{remark}

\subsection{On the torus}

On the torus, even when the dispersive effect is strong and the nonlinearity has high degree,
an additional structural condition is required to obtain well-posedness in Sobolev spaces.
In fact, the authors in \cite{KoOk25} determined necessary and sufficient conditions for \eqref{hNLS} with $j=1$ to be well-posed in $H^s(\T)$ for $s>\frac 52$.
Their ill-posedness result is derived from the non-existence of solutions for certain initial data;
see also \cite{KKO25} for fNLS.

The existence of solutions depends sensitively on the choice of function space.
For instance, the result in \cite{KoOk25} implies that \eqref{dNLSC} is ill-posed in $H^s(\T)$ for $s > \frac 52$ in the sense that solutions fail to exist.
Note that Chihara \cite{Chi02} and Christ \cite{Chr03} proved that the solution map for \eqref{dNLSC} is discontinuous in Sobolev spaces.
In contrast, one can construct solutions to \eqref{dNLSC} in the space of analytic functions.
Furthermore, Chung, Guo, Kwon, and Oh \cite{CGKO17} established global well-posedness of \eqref{dNLSC} in $L^2(\T)$ under mean-zero and smallness assumptions on the initial data.

Moreover, the result of \cite{NaWa25} also applies to \eqref{dNLS} on $\T$,
since 
$\Ft^{-1} \X$ contains periodic distributions as a special case,
where $\X$ is the space defined in Definition \ref{def:X}.
Building on their result, we establish an ill-posedness result for \eqref{dNLS}.
Our main theorem here generalizes our previous work \cite{KoOk25a}, which treated the case $\be=1$;
see also Remark \ref{rem:dx1T2}.

We denote the set of nonnegative integers by $\N_0$.
On $\T$, we obtain the following ill-posedness result.

\begin{theorem} 
\label{thm:illT}
When $\Ml = \T$,
let $\al, \be > 0$ and $s, \s \in \R$.
Then, for any $0<\eps \ll 1$,
there exist an initial datum $\phi \in C^\infty(\T)$ with
$\|\phi\|_{H^s} < \eps$ and $\supp \ft \phi \subset \N_0$,
a time $T \in (0, \eps)$,
and
a function $u$
such that the following conditions hold:
\begin{enumerate}
\item
$u - \ft \phi(0) \in C([0,T]; \Ft^{-1} \X)$.

\item
$u$ is a solution to \eqref{dNLS} in the sense that
\[
\left\{
\begin{aligned}
&\dt \ft u(t,n) + i |n|^\al \ft u(t,n) = \sum_{\substack{n_1,n_2 \in \N_0 \\ n_1+n_2=n}} \ft u(t,n_1) |n_2|^\be \ft u(t,n_2),
\\
& \ft u(0,n) = \ft \phi (n)
\end{aligned}
\right.
\]
for $t \in [0,T]$ and $n \in \N_0$.

\item
$\|u(T)\|_{H^\s} > \eps^{-1}$.
\end{enumerate}
\end{theorem}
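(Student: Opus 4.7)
The plan is to exploit the special structure of the torus problem by taking initial data supported on only the zero mode and a single high mode $N$; the Fourier side then becomes a hierarchy on $N\N_0$ in which the mode-$N$ evolution decouples into a scalar linear ODE producing exponential-in-$N$ growth. Concretely, fix a large $N \in \N$ (to be chosen at the end), set $A := \eps/4$, $B := \eps \jb{N}^{-s}/4$, and $\phi(x) := A + Be^{iNx}$. Then $\phi \in C^\infty(\T)$, $\supp \ft\phi \subset \{0, N\} \subset \N_0$, and $\|\phi\|_{H^s}^2 = A^2 + \jb{N}^{2s} B^2 \le \eps^2 / 8$. Applying Theorem \ref{thm:wel} of Nakanishi--Wang to this datum produces a global solution $u$ with $u - A \in C([0, \infty); \Ft^{-1}\X)$ obeying the Fourier system in (ii).

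Since $N \N_0 := \{m N : m \in \N_0\}$ is closed under addition, $\supp \ft u(t, \cdot) \subset N \N_0$ for every $t \ge 0$. The zero mode is conserved because the right-hand side at $n = 0$ equals $|0|^\be \ft u(t, 0)^2 = 0$, so $\ft u(t, 0) \equiv A$. At $n = N$, the only decompositions $n_1 + n_2 = N$ with $(n_1, n_2) \in N \N_0 \times N \N_0$ are $(0, N)$ and $(N, 0)$; the latter vanishes because $|0|^\be = 0$, while the former contributes exactly $A N^\be \ft u(t, N)$. Mode $N$ therefore obeys the closed scalar linear ODE
\[
\dt \ft u(t, N) + i N^\al \ft u(t, N) = A N^\be \ft u(t, N),
\]
whose explicit solution is $\ft u(t, N) = B \exp\bigl( (A N^\be - i N^\al) t \bigr)$, giving $|\ft u(t, N)| = B e^{A N^\be t}$.

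Choosing $T := \eps / 2 \in (0, \eps)$ and bounding $\|u(T)\|_{H^\s}$ below by the single mode $N$ yields
\[
\|u(T)\|_{H^\s} \ge \jb{N}^\s |\ft u(T, N)| \ges \eps \, N^{\s - s} \exp\bigl( \tfrac{\eps^2}{8} N^\be \bigr),
\]
which exceeds $\eps^{-1}$ as soon as $N$ is chosen sufficiently large (depending on $\eps, s, \s$), since $N^\be$ beats any logarithmic correction. The only delicate point in the argument is identifying the abstract Nakanishi--Wang solution with the one prescribed mode-by-mode through the above ODEs; this is justified by uniqueness in $A + \Ft^{-1}\X$ together with the observation that the half-space Fourier support confines every convolution sum to finitely many terms at each $n$. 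Compared with the line case of Theorem \ref{thm:ill}, the torus argument is much cleaner: the zero mode is literally conserved and acts directly as an exact low-frequency driver at the linear-ODE level, so no fine extraction of a leading iterated term in the style of Proposition \ref{prop:Ftuk} is required.
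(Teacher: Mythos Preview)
Your overall strategy is exactly the paper's: zero mode plus a single high mode $N$, so that the mode-$N$ equation decouples into the scalar ODE $\dt \ft u(t,N) = (AN^\be - iN^\al)\ft u(t,N)$ with exact growth $|\ft u(t,N)| = B e^{AN^\be t}$, which immediately gives norm inflation with infinite loss of regularity. Your parameter choices ($A=\eps/4$, $T=\eps/2$) differ from the paper's ($A=1/\log N$, $T=(|\s-s|+1)(\log N)^2/N^\be$), but both work; yours are arguably cleaner.

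There is, however, a genuine gap at the existence step. You write ``Applying Theorem~\ref{thm:wel} of Nakanishi--Wang to this datum produces a global solution $u$ with $u - A \in C([0,\infty); \Ft^{-1}\X)$,'' but Theorem~\ref{thm:wel} (and likewise Theorem~\ref{thm:NaWa4.2}) requires $\phi \in \Ft^{-1}\X$. Your $\phi$ has $\ft\phi(0)=A\neq 0$, and Remark~\ref{rem:Xn1} shows that any such datum lies \emph{outside} $\Ft^{-1}\X$: for every $\mu,\nu\in C_0^\up$ one has $\rho_l^{\mu,\nu}(\ft\phi)\ge \nu(l)^{-1}|A|\to\infty$ as $l\to 0^+$. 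So neither well-posedness theorem applies to $\phi$ as stated. The paper closes this gap by passing to $v:=u-A$ and $\wt\phi:=\phi-A$ (now $\wt\phi\in\Ft^{-1}\X$ by Lemma~\ref{lem:suppT}), noting that $v$ solves the modified problem \eqref{dNLS+} with the extra linear term $A\,D^\be v$, and then proving a separate well-posedness result, Theorem~\ref{thm:wel-T}, for \eqref{dNLS+}. This is not a triviality: the new linear propagator $e^{-itD^\al + tAD^\be}$ is not an isometry on $\X^{0,\nu}$, and the proof needs a $t$-dependent weight $\wt\nu(t,\cdot)$ absorbing the growth $e^{tL(l)}$ (see \eqref{eq:Ll1} and \eqref{nus}). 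Your closing remark about ``uniqueness in $A+\Ft^{-1}\X$'' correctly flags the issue, but the justification you sketch is precisely the content of Theorem~\ref{thm:wel-T} and cannot be obtained by citing Theorem~\ref{thm:wel}. Once existence is secured in this way, your mode-$N$ computation coincides with the paper's \eqref{ftvN}, and the rest of your argument is correct.
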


Since we require $\ft \phi (0) \neq 0$ in the proof of Theorem \ref{thm:illT},
we can not directly apply the results of \cite{NaWa25};
see Remark \ref{rem:Xn1}.
To avoid any ambiguity regarding the notion of a solution, we state Theorem \ref{thm:illT} (ii).

In contrast to Theorem \ref{thm:ill} on $\R$,
Theorem \ref{thm:illT} establishes norm inflation with infinite loss of regularity for every $\al, \be > 0$. 
In particular, \eqref{dNLS} is ill-posed in Sobolev spaces for all $\al, \be > 0$.
This indicates that, in the periodic setting, there is no local smoothing effect and no recovery of derivative losses in the nonlinearity of \eqref{dNLS}.

The proof of Theorem \ref{thm:illT} is simpler than that of Theorem \ref{thm:ill}.
Let $\phi \in H^s(\T)$ with
$\supp \ft \phi \subset \N_0$.
Suppose that $u$ is a solution to \eqref{dNLS}. 
Set
\begin{align*}
\wt \phi &:= \phi - \ft \phi(0), \\
v &:= u - \ft \phi(0).
\end{align*}
Then, $v$ satisfies the following Cauchy problem:
\begin{equation}
\left\{
\begin{aligned}
&\dt v + i D^\al v = \ft \phi(0) D^\be v + v D^\be v,
\\
&v|_{t=0} = \wt \phi.
\end{aligned}
\right.
\label{dNLS+}
\end{equation}
Note that $\wt \phi \in H^s(\T)$ and $\supp \Ft[ \wt \phi] \subset \N$.

By the result of \cite{NaWa25} (see Theorem \ref{thm:wel-T} below),
the Cauchy problem \eqref{dNLS+} is globally well-posed in $\F^{-1} \X$.
Moreover, if there exists a solution $v$ to \eqref{dNLS+} in $\Ft^{-1} \X$, then by setting $u=v+ \ft \phi (0)$,
we obtain a solution to \eqref{dNLS}.
In the proof of Theorem \ref{thm:illT}, we exploit the fact that $v$ admits an expansion into iterated terms.
Assuming $\supp \Ft[\wt \phi] \neq \emptyset$,
we define $N := \inf \supp \Ft[\wt \phi]$.
Then,
we have
\begin{equation*}
|\ft v (t,N) | 
= |e^{-it N^\al + t \ft \phi (0) N^\be} \Ft[ \wt \phi](N)|
= e^{t (\Re \ft \phi(0)) N^\be} |\ft \phi(N)|.
\end{equation*}
By choosing $\Re \ft \phi (0)$ and $\ft \phi(N)$ appropriately,
we can make the right-hand side grow exponentially in $N$.
This yields norm inflation with infinite loss of regularity, and thus establishes Theorem \ref{thm:illT}.

\begin{remark}
\rm
\label{rem:dx1T2}

\begin{enumerate}
\item
The same comment in Remark \ref{rem:dx1} also applies to Theorem \ref{thm:illT}.

\item
The above argument extends to higher-order nonlinearities.
In particular, 
the same norm inflation result as in Theorem \ref{thm:illT} holds if we replace $u D^\be u$ by $u^k D^\be u$ for any $k \in \N$.
\end{enumerate}
\end{remark}

\begin{remark}
\rm

In the periodic case, when $0 < \be < \al-1$,
one can prove non-existence of solutions to \eqref{dNLS} in $H^s(\T)$ by applying the methods of \cite{KiTs18, KKO25}.
However, it may still be possible that a solution to \eqref{dNLS} exists in a space endowed with a weaker topology for the solution than that of the initial data.
Even in this case, Theorem \ref{thm:illT} shows that the solution map $\phi \mapsto u$ from $H^s(\T)$ to $C([0,T]; H^\s(\T))$ is discontinuous for any $s,\s \in \R$.
\end{remark}

\subsection{Notations}

We denote the set of positive integers by $\N$.
For any set $B$, $\ind_B$ denotes its characteristic function.

For any $x \in \R$,
we define
\begin{align*}
\floor{x} &:= \max \{ n \in \Z \mid n \le x \},
\\
\ceil{x} &:= \min \{ n \in \Z \mid n \ge x \}.
\end{align*}
We write $A \les B$ to mean that $A \le CB$ for some $C > 0$. 
We also write $A \sim B$ when both $ A \les B $ and $ A \ges B $.

Given an integrable function $f$, define
\[
\begin{aligned}
\Ft[f](\xi) &= \ft f(\xi)
:=
\frac 1{\sqrt{2\pi}}
\int_\Ml f(x) e^{-ix \xi} dx.
\end{aligned}
\]
We also denote $\Ft$ and $\ft \cdot$ the Fourier transform of tempered distributions.

We write $f \ast g$ to denote the convolution of $f$ and $g$. 
Set
\[
f^{\ast(0)} = \dl,
\]
where $\dl$ is the Dirac measure concentrated to $0 \in \R$.
We also define $f^{\ast (n)}$ as
\[
f^{\ast(n)} := f^{\ast (n-1)} \ast f
\]
for $n \in \N$.


We fix a $C^\infty$ function $\chi: \R \to [0,1]$ satisfying
$\chi (\xi)=1$ for $\xi \le 1$ and $\chi (\xi)=0$ for $\xi \ge 2$.
Define the smooth cut-off operators as
\begin{equation}
(X^l f)(\xi) := \chi \Big(\frac \xi l \Big) f(\xi), 
\quad (X_l f)(\xi) := \Big( 1 - \chi \Big( \frac \xi l \Big) \Big) f(\xi)
\label{X^_l}
\end{equation}
for $l>0$.

\subsection{Organization of the paper}

The rest of the paper is organized as follows.
In Section \ref{sec:NaWa}, we recall the definition of the space of distributions supported on the Fourier half-space $\Ft^{-1} \X$.
Moreover, we prove that \eqref{dNLS} is well-posed in a subspace of $\Ft^{-1} \X$ by using a contraction mapping argument.
In Section \ref{sec:illR}, after estimating each iterated term, we establish norm inflation with infinite loss of regularity for the real-line case, which yields Theorem \ref{thm:ill}.
In Section \ref{sec:illT}, we show that \eqref{dNLS+} is well-posed in a subspace of $\Ft^{-1} \X$,
and then we prove Theorem \ref{thm:illT}.
In Appendix \ref{app:WP}, we prove that \eqref{dNLS} is well-posed in $H^s(\R)$ when $\al$ and $\be$ do not satisfy condition \eqref{cond:be}, using linear estimates established in \cite{KPV91, KPV91b}.

\section{Global well-posedness in distributions
on the Fourier half-space}
\label{sec:NaWa}

In this section, we recall the notation, the Fourier half-space, and the global well-posedness results due to Nakanishi and Wang \cite{NaWa25}.

\subsection{The space of distributions supported on the Fourier half-space}

Set
\begin{align*}
C^\up &:= \{f:[0,\infty) \to [0,\infty) \mid \text{\rm continuous and non-decreasing} \},
\\
C^\down &:= \{f:[0,\infty) \to [0,\infty) \mid \text{\rm continuous and non-increasing} \}.
\end{align*}
For and any interval $I \subset \R$,
let
\begin{align*}
C^\up_I &:= \{ f \in C^\up \mid f([0,\infty)) \subset I \}, \\
C^\down_I &:= \{ f \in C^\down \mid f([0,\infty)) \subset I \}.
\end{align*}
Moreover,
we define
\[
C^\up_\dl := \big \{f \in C^\up \mid f(0) = 0, \; f([0,2\dl]) \subset \big[0, \frac 12 \big] \big\}
\]
for any $\dl > 0$
and
\[
C^\up_0 := \bigcup_{\dl \in (0,1]} C_\dl^\up.
\]
We denote
\[
\D_{< a} := \{ \psi \in \D (\R) \mid \supp \psi \subset (-\infty, a)\}
\]
for $a \in \R$.

\begin{definition}
For any $m \in [0,\infty)$, we define
\[
\|\psi\|_m^+
:= \max_{\substack{n \in \N_0 \\ n \le m}} \sup_{\xi \in [0, \infty)} |\pd^n \psi(\xi)|.
\]
Moreover, for any $l, m \in [0,\infty)$, and $F \in \D'(\R)$, we define
\begin{equation}
\rho_l^m(F) := \sup \{ \Re \dual{F}{\psi} \mid \psi \in \D_{< l}, \
\| \psi \|_m^+ \le 1 \}.
\label{rholm1}
\end{equation}
\end{definition}

We construct Fourier half space $\X$, using $\rho_l^m$ defined above.
\begin{definition}
Let $\mu, \nu \in C^\up$.
Define
\[
\rho_l^{\mu, \nu} (F) 
:=
\inf \{ B \ge 0 \mid 0 < t \le l \implies \rho_t^{\mu(t)} (F) \le B \nu(t) \}
\]
for $l \in (0, \infty)$ and $F \in \D'(\R)$,
and
\[
\rho^{\mu, \nu} (F) 
:=
\sup_{ l > 0} \rho_l^{\mu, \nu} (F).
\]
Moreover,
we set
\[
\X^{\mu, \nu}
:= \{ F \in \D'(\R) \mid \rho^{\mu, \nu} (F) < \infty \}.
\]
Finally, we define
\begin{equation*}
\X := \bigcup_{\mu,\nu \in C_0^\up} \X^{\mu, \nu}.
\end{equation*} 
\end{definition}

For any $F \in \X^{\mu,\nu}$, $l > 0$, and $\psi \in \D_{<l}$,
we have
\begin{equation}
|\dual{F}{\psi}| \le \nu(l) \rho_l^{\mu, \nu}(F) \|\psi\|_{\mu(l)}^+,
\label{norm}
\end{equation}
where $\rho_l^{\mu, \nu}(F)$ is the optimal non-negative number satisfying this inequality.

\begin{remark}
\rm
\begin{enumerate}
\item
For any $\mu, \nu \in C^\up$,
the space $\X^{\mu,\nu}$ is the Banach space.

\item
The definition of $\X$ above is equivalent to that in Definition \ref{def:X}.
See Lemma 2.4 in \cite{NaWa25}.

\item
The space $\X$ is endowed with the inductive limit topology of locally convex spaces.
See Definition 2.2 in \cite{NaWa25}.
\end{enumerate}
\end{remark}

The Fourier image
\[
\F \D(\R)
:=
\{ \ft \psi \mid \psi \in \D(\R) \}
\]
has the natural topology induced by $\Ft$ from $\D(\R)$.
Then,
$\Ft^{-1} \D'(\R)$ defined as the dual space:
\[
\Ft^{-1} \D'(\R)
:= \{ F : \Ft \D(\R) \to \C \mid \text{linear and continuous} \}.
\]
The Fourier transform $\Ft : \Ft^{-1} \D'(\R) \to \D'(\R)$ is defined by
\[
\dual{\Ft F}{\psi} :=
\dual{F}{\ft \psi}
\]
for $F \in \Ft^{-1} \D'(\R)$ and $\psi \in \D(\R)$.
Moreover,
for any topological linear space $X \hookrightarrow \D'(\R)$,
its Fourier inverse image is denoted by
\[
\Ft^{-1} X
:= \{ F \in \Ft^{-1} \D'(\R) \mid \ft F \in X \},
\]
with the natural topology induced by $\Ft^{-1}$.

We define a solution to \eqref{dNLS}.

\begin{definition}
Let $T > 0$. We say that $u \in C([0,T]; \Ft^{-1} \X)$ is a solution to \eqref{dNLS} when $u$ satisfies
\[
u(t)= e^{-itD^\al} \phi + \int_0^t e^{-i(t-t')D^\al}(u D^\be u(t')) dt'
\]
in $\Ft^{-1} \X$ for all $t \in [0,T]$.
\end{definition}

The result in \cite{NaWa25}
yields the following global well-posedness of \eqref{dNLS} in $\Ft^{-1} \X$.

\begin{theorem}[Corollary 4.2 in \cite{NaWa25}]
\label{thm:NaWa4.2}
The Cauchy problem \eqref{dNLS} is globally well-posed in $\Ft^{-1} \X$.
More precisely, the followings hold:
\begin{enumerate}
\item
For any $\phi \in \Ft^{-1} \X$, there is a unique solution $u \in C([0, \infty); \Ft^{-1} \X)$.
\item 
For any sequence $\{\phi_n\}_{n \in \N} \subset \Ft^{-1} \X$ convergent to some $\phi_\infty \in \Ft^{-1} \X$,
let $u_j \in C([0, \infty); \Ft^{-1} \X)$ be the corresponding solutions for $j \in \N \cup \{\infty\}$. 
Then, for any $T > 0$,  $u_n(t) \to u_\infty (t) \in \Ft^{-1} \X$ uniformly for $0 \le t \le T$ as 
$n \to \infty$.
\end{enumerate}
\end{theorem}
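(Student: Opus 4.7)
The plan is to recast \eqref{dNLS} as the Duhamel fixed-point equation
\[
\Phi[u](t) := e^{-itD^\al}\phi + \int_0^t e^{-i(t-t')D^\al}(u D^\be u)(t')\,dt'
\]
and run a contraction argument on a closed ball of $C([0,T];\X^{\mu,\nu})$ for suitable $\mu,\nu \in C_0^\up$. The choice of $\X^{\mu,\nu}$ (rather than the inductive limit $\X$) is essential because the contraction needs a Banach-space structure; once a local solution is produced in $\X^{\mu,\nu}$ one enlarges $\mu,\nu$ if necessary and extends in time to obtain global existence in $\Ft^{-1}\X$.

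The first check is that $\Phi$ maps $\Ft^{-1}\X$ into itself. On the Fourier side, the linear flow is the pointwise multiplier $e^{-it|\xi|^\al}$, which is smooth on $[0,\infty)$ and preserves the condition $|\fm|((-\infty,0])=0$ from Definition \ref{def:X}. For the nonlinearity, $\Ft(u D^\be u) = \ft u \ast (|\cdot|^\be \ft u)$, and the convolution of two distributions whose restrictions are measures on $(-\infty,a)$ vanishing on $(-\infty,0]$ is again of this form. Hence the Fourier half-space is preserved by both operations, and in fact the nonlinear interaction shifts the effective support strictly to the right, which is the structural mechanism that drives everything.

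The technical heart is the bilinear estimate
\[
\rho^{\mu,\nu}\bigl(\Ft(uD^\be v)\bigr) \les \rho^{\mu,\nu}(\ft u)\,\rho^{\mu,\nu}(\ft v),
\]
proved by testing against $\psi \in \D_{<l}$ and rewriting
\[
\dual{\ft u \ast (|\cdot|^\be \ft v)}{\psi}
= \Dual{\ft u(\eta)}{\int |\zeta|^\be \ft v(\zeta)\,\psi(\eta+\zeta)\,d\zeta}.
\]
Because $\supp \ft u, \supp \ft v \subset [0,\infty)$ and $\psi$ is supported in $(-\infty,l)$, only $\eta \in [0,l)$ and $\zeta \in [0,l)$ contribute, so the inner integrand is itself a test function in $\D_{<l}$. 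Applying \eqref{norm} to each factor yields a bound with a prefactor $\nu(l)^2$; the derivative loss $|\zeta|^\be$ is absorbed into $\|\psi\|_{\mu(l)}^+$ by using smoothness of $\psi$ together with the bound $|\zeta|<l$. The defining property $\nu(0)=0$ and $\nu([0,2\dl])\subset[0,\tfrac12]$ of $C_0^\up$ then provides precisely the smallness that closes the fixed point.

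Once the bilinear estimate is in hand, local existence, uniqueness, and Lipschitz dependence (and hence (ii)) follow from the standard contraction and difference-equation arguments applied to $\Phi$. Globalization uses the fact that the nonlinear estimate carries no time-factor from the linear propagator (which is an isometry on each Fourier fiber), so the solution may be iterated across the real time axis, with $\mu,\nu$ adjusted within $C_0^\up$ at each step to accommodate the current $\X$-norm; this is compatible with the inductive-limit topology used to define $\X$. The main obstacle I anticipate is the bookkeeping of the derivative order in $\mu(l)$ through repeated bilinear composition: each application of the estimate ingests one derivative on the test function and adds a factor of $l^\be$, so one must carefully engineer $\mu$ and $\nu$ in $C_0^\up$ so that these cumulative losses are dominated by the vanishing of $\nu$ at zero — this interplay between the support-shift mechanism and the admissible classes $C_0^\up$ is what makes the scheme work uniformly in time.
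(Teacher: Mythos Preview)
The paper does not itself prove this theorem; it is quoted as Corollary~4.2 of \cite{NaWa25}. The paper does, however, reproduce the essential contraction-mapping argument in the special case it needs (Theorem~\ref{thm:wel}, with $\mu\equiv 0$), so one can compare your sketch against that.

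Your overall strategy is the right one and matches the paper's: contraction on a Banach subspace $\X^{\mu,\nu}$, the linear flow as a norm-preserving multiplier (Lemma~\ref{lem:bd1}), and convolution structure of the nonlinearity. But the bilinear mechanism you describe is not quite the one that makes the argument close. You write the estimate with the \emph{same} weight $\rho^{\mu,\nu}$ on both sides, and propose to absorb the factor $|\zeta|^\be$ into $\|\psi\|_{\mu(l)}^+$ ``by using smoothness of $\psi$.'' That is a mis-identification: multiplication of the test function by $|\zeta|^\be$ is a size issue, not a derivative issue; it costs a factor $l^\be$ and nothing about $\mu$ helps. In the actual argument (Lemma~\ref{lem:convo} together with \eqref{nusR}--\eqref{xiudua}) one introduces $\kappa(l)=(1+l^\be)^{-1}$ and a map $\CC$ producing $\nu_1=\CC(\nu_*,\kappa)$ so that the convolution estimate reads
\[
\rho_l^{0,\kappa\nu_1}(F\ast G)\le \rho_l^{0,\nu_1}(F)\,\rho_l^{0,\nu_1}(G),
\]
i.e.\ with a \emph{different} (smaller) weight on the output; the extra $\kappa$ is exactly what swallows the $l^\be$ from $D^\be$. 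Likewise, the smallness that closes the fixed point does not come from the property $\nu([0,2\dl])\subset[0,\tfrac12]$ of $C_0^\up$ --- that only controls small $l$, whereas the contraction must hold uniformly in $l>0$ --- but from a free parameter $\eps$ inserted into the weight $\nu=\eps\kappa\nu_1$ (see \eqref{eps1}, \eqref{nusR}). The missing ingredient in your outline is precisely this two-weight structure produced by $\CC$; without it the ``bookkeeping'' you flag at the end cannot be carried out.
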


In our setting, we consider \eqref{dNLS} in $H^s(\R)$. The following lemma yields that
we can take $\mu \equiv 0$.

\begin{lemma}
\label{lem:supp}
Let $s \in \R$ and define
\begin{equation}
\nu_0 (t) := \Big( \int_0^t \jb{\xi}^{-2s} d\xi \Big)^{\frac 12}
\label{nu}
\end{equation}
for $t \ge 0$.
Suppose that $F \in H^s(\R)$ satisfies $\supp \ft F \subset [0, \infty)$. 
Then, we have $\ft F \in \X^{0,\nu_0}$.
In particular, $F \in \Ft^{-1} \X$. 
\end{lemma}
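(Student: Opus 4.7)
The goal is to bound the functional $\rho^{0,\nu_0}(\ft F)$ directly using the $H^s$-norm of $F$, by unfolding the definition of $\rho_t^m$ and exploiting the support condition $\supp \ft F \subset [0,\infty)$. Fix $t > 0$ and take any test function $\psi \in \D_{<t}$ with $\|\psi\|_0^+ = \sup_{\xi \ge 0}|\psi(\xi)| \le 1$. Since $F \in H^s(\R)$, the function $\ft F$ lies in $L^2(\langle \xi \rangle^{2s} d\xi)$, so the distributional pairing $\dual{\ft F}{\psi}$ is realized as an absolutely convergent integral. The support hypothesis on $\ft F$ and the support condition on $\psi$ then reduce this pairing to the interval $[0,t)$:
\[
\dual{\ft F}{\psi} = \int_0^t \ft F(\xi) \psi(\xi)\, d\xi.
\]

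The next step is a weighted Cauchy--Schwarz estimate. Splitting the integrand as $(\langle \xi \rangle^s \ft F(\xi)) \cdot (\langle \xi \rangle^{-s} \psi(\xi))$ and applying Cauchy--Schwarz on $[0,t)$ yields
\[
|\dual{\ft F}{\psi}|
\le
\Big( \int_0^t |\ft F(\xi)|^2 \langle \xi \rangle^{2s} d\xi \Big)^{\frac 12}
\Big( \int_0^t |\psi(\xi)|^2 \langle \xi \rangle^{-2s} d\xi \Big)^{\frac 12}
\le
\|F\|_{H^s} \cdot \|\psi\|_0^+ \cdot \nu_0(t)
\le
\|F\|_{H^s}\, \nu_0(t),
\]
where in the middle factor one pulls out $\|\psi\|_{L^\infty([0,t))} \le \|\psi\|_0^+ \le 1$ and recognizes the remaining integral as $\nu_0(t)^2$. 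Taking the supremum over admissible $\psi$ gives $\rho_t^0(\ft F) \le \|F\|_{H^s}\, \nu_0(t)$ for every $t > 0$, which is exactly the condition $\rho^{0,\nu_0}(\ft F) \le \|F\|_{H^s} < \infty$. Hence $\ft F \in \X^{0,\nu_0}$.

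To conclude $F \in \Ft^{-1}\X$, it remains to verify $0, \nu_0 \in C_0^\up$, so that $\X^{0,\nu_0}$ is one of the spaces in the inductive union defining $\X$. The zero function trivially lies in $C_\dl^\up$ for every $\dl > 0$. For $\nu_0$: it is continuous and non-decreasing by construction, and $\nu_0(0)=0$. Since $\nu_0$ is continuous at $0$ with $\nu_0(0)=0$, we may choose $\dl \in (0,1]$ small enough that $\nu_0(2\dl) \le \frac 12$, which places $\nu_0 \in C_\dl^\up \subset C_0^\up$. This gives $\X^{0,\nu_0} \subset \X$ and completes the argument.

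The proof is essentially a weighted duality computation, with no real obstacle beyond making the distributional pairing rigorous (which is immediate from $F \in H^s$) and checking the mild behavior of $\nu_0$ near the origin needed for membership in $C_0^\up$.
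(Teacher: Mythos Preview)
Your proof is correct and follows essentially the same route as the paper: reduce the pairing $\dual{\ft F}{\psi}$ to an integral over $[0,t)$ using the support conditions, then apply a weighted Cauchy--Schwarz (H\"older) inequality with weight $\jb{\xi}^{s}$ to extract $\|F\|_{H^s}\,\nu_0(t)\,\|\psi\|_0^+$. The only cosmetic difference is the order in which you pull out $\|\psi\|_0^+$ versus apply Cauchy--Schwarz, and you spell out the verification that $\nu_0\in C_0^\up$ a bit more explicitly than the paper does.
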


\begin{proof}
Note that $\nu_0$ defined in \eqref{nu} belongs to $C_0^\up$.
Let $l > 0$ and $\psi \in \D_{< l}(\R)$.
From $F \in H^s(\R)$, we have $\ft F \in L^2_{loc}(\R)$.
H\"older's inequality, $\supp \ft F \subset [0,\infty)$, and \eqref{nu} imply that
\begin{align*}
|\dual {\ft F}\psi |
&\le
\int_\R |\ft F(\xi)| \ind_{[0,l)} |\psi(\xi)| d\xi
\\
&\le
\Big( \int_0^l \jb{\xi}^{-s} \jb{\xi}^s |\ft F(\xi)| d\xi \Big) \|\psi\|_0^+ 
\\
&\le
\nu_0 (l)
\|F\|_{H^s} \|\psi\|_0^+. 
\end{align*}
With \eqref{norm}, we obtain that
\[
\rho_l^{0, \nu_0}(\ft F) 
\le \|F\|_{H^s}
\]
for $l >0$.
Hence, we have
\[
\rho^{0, \nu_0}(\ft F)
= 
\sup_{l > 0} \rho_l^{0, \nu_0}(\ft F) 
\le 
\|F \|_{H^s} 
< \infty.
\]
Then, $\ft F \in \X^{0, \nu_0}$.
\end{proof}

\subsection{Global well-posedness}

In the proof of Theorem \ref{thm:ill},
we need a result that follows from the proof of Theorem \ref{thm:NaWa4.2}.
In particular, we use the fact that a solution can be obtained by a contraction mapping on a suitable subspace of $\X$.
Although this is essentially a repetition of the argument of \cite{NaWa25},
it is required in our proof, so we summarize their argument in our setting.

\begin{theorem}
\label{thm:wel}
Let $\al,\be >0$.
For any $\nu_0 \in C_0^\up$ and $b_0>0$,
there exists
$\nu \in C_0^\up$
such that the followings hold:

\begin{enumerate}
\item
$\nu(l) \ge \nu_0(l)$ for $l \ge 0$.

\item
For any $\phi \in \Ft^{-1} \X^{0, \nu_0}$
with $\rho^{0, \nu_0}(\ft \phi) \le b_0$,
there exists a unique solution $u \in C([0,1]; \Ft^{-1} \X^{0,\nu})$ to \eqref{dNLS}
satisfying
\[
\sup_{0 \le t \le 1}
\rho^{0, \nu}( \ft u(t) ) \le 2 b_0.
\]
\end{enumerate}
\end{theorem}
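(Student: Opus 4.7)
The plan is to apply Banach's fixed point theorem to the Duhamel map
\[
\Phi_\phi(u)(t) := e^{-itD^\al}\phi + \int_0^t e^{-i(t-t')D^\al}\bigl(u D^\be u\bigr)(t')\,dt'
\]
on a closed ball of radius $2b_0$ in $C([0,1]; \Ft^{-1}\X^{0,\nu})$ for some $\nu \in C_0^\up$ with $\nu \ge \nu_0$, to be constructed from $\nu_0$ and $b_0$ following the Nakanishi--Wang scheme. Once the self-mapping and contraction estimates are in hand, the uniqueness and the bound $\sup_t \rho^{0,\nu}(\ft u(t))\le 2b_0$ in (ii) follow immediately, while continuity in $t$ comes from the integrated Duhamel identity.

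For the linear part, $e^{-itD^\al}$ is multiplication by the unimodular phase $e^{-it|\xi|^\al}$ in Fourier. For any $\psi \in \D_{<l}$, the pairing $\dual{\ft{e^{-itD^\al}\phi}}{\psi} = \dual{\ft\phi}{e^{it|\xi|^\al}\psi}$ uses a test function with the same support in $(-\infty, l)$ and the same $\|\cdot\|_0^+$-norm (only the sup-norm matters since $\mu \equiv 0$). Hence $\sup_{t \ge 0}\rho^{0,\nu}(\ft{e^{-itD^\al}\phi}) \le \rho^{0,\nu_0}(\ft\phi) \le b_0$ whenever $\nu \ge \nu_0$.

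The main step is a bilinear estimate on the nonlinear integrand, whose Fourier transform is (a constant multiple of) $\ft u * (|\eta|^\be \ft u)$. By Definition \ref{def:X}, elements of $\X$ restrict to Borel measures on some $(-\infty, a)$ with support in $(0, a)$, so for $F, G$ so supported and $\psi \in \D_{<l}$ the pairing
\[
\dual{F * (|\eta|^\be G)}{\psi} = \iint |\xi_2|^\be \psi(\xi_1 + \xi_2)\, dF(\xi_1)\, dG(\xi_2)
\]
is an honest integral, with both variables automatically confined to $[0, l)$ by the support conditions. Iterating the defining inequality \eqref{norm} in $\xi_1$ and then in $\xi_2$, and exploiting the derivative weight $|\xi_2|^\be$ together with the vanishing of $\nu$ near $0$, yields a schematic bound
\[
\rho_l^0\bigl(\ft u(t') * (|\eta|^\be \ft v(t'))\bigr) \le K(l)\, \rho^{0,\nu}(\ft u(t'))\, \rho^{0,\nu}(\ft v(t'))
\]
for a weight $K$ depending on $\nu$ and $\be$. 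Time integration over $[0, t] \subset [0, 1]$ is harmless since $|e^{-i(t-t')|\xi|^\al}| = 1$, so the nonlinear contribution to $\Phi_\phi$ inherits the same bound up to an absolute constant.

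The last step is to construct $\nu$. Following the Nakanishi--Wang scheme, one enlarges $\nu_0$ into $\nu \in C_0^\up$ so that $K(l) \le \tfrac{1}{8 b_0}\, \nu(l)$ uniformly in $l > 0$. This gives the self-mapping bound $\rho^{0,\nu}(\ft{\Phi_\phi(u)}(t)) \le b_0 + (2b_0)^2/(8b_0) = \tfrac{3}{2} b_0 \le 2b_0$ on the ball and an analogous Lipschitz factor $\le \tfrac{1}{2}$ on the difference $\Phi_\phi(u) - \Phi_\phi(v)$, so Banach's theorem produces the unique fixed point. The main obstacle is precisely this construction of $\nu$: it must simultaneously dominate $\nu_0$, belong to $C_0^\up$ (in particular $\nu(0) = 0$ and $\nu([0, 2\dl]) \subset [0, \tfrac{1}{2}]$ for some $\dl \in (0, 1]$), and absorb the bilinear loss $K(l)$ uniformly on $[0, \infty)$. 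This is exactly the flexibility provided by the abstract framework of \cite{NaWa25}, which transfers essentially verbatim to our quadratic derivative setting once the bilinear estimate above is in place.
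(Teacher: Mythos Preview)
Your overall strategy matches the paper's: contraction on the ball of radius $2b_0$ in $C([0,1];\Ft^{-1}\X^{0,\nu})$, with the linear part handled by unimodularity of $e^{-it|\xi|^\al}$ (this is Lemma~\ref{lem:bd1}) and the nonlinear part by a bilinear convolution estimate. However, the heart of the proof---the bilinear estimate and the construction of $\nu$---is left as a black box, and your description is circular: you define $K(l)$ in terms of $\nu$ and then ask for a $\nu$ satisfying $K(l) \le \tfrac{1}{8b_0}\nu(l)$. It is not at all clear that such a $\nu \in C_0^\up$ exists. Moreover, the derivative weight $|\xi_2|^\be$ is a \emph{loss} of order $l^\be$ for $\xi_2$ near $l$, not something cured by ``the vanishing of $\nu$ near $0$'' as you suggest.

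The paper breaks the circularity via Lemma~\ref{lem:convo} (Lemma~3.4 of \cite{NaWa25}), which supplies an explicit operator $\CC : C_0^\up \times C_{(0,1]}^\down \to C_0^\up$ with the key property \eqref{convo0}: for $\nu_1 = \CC(\nu_\ast,\kappa)$ one has $\rho^{0,\kappa\nu_1}_l(F*G) \le \rho^{0,\nu_1}_l(F)\,\rho^{0,\nu_1}_l(G)$, so convolution \emph{gains} the small factor $\kappa$. The paper then takes $\kappa(l) = (1+l^\be)^{-1}$, $\nu_\ast = \eps^{-1}\nu_0$ with $\eps < (8b_0)^{-1}$, $\nu_1 = \CC(\nu_\ast,\kappa)$, and finally $\nu := \eps\kappa\nu_1$. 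The specific choice of $\kappa$ absorbs the derivative loss via $l^\be\kappa(l) \le 1$ (see \eqref{xiudua}); the factor $\eps$ supplies the smallness for contraction (see \eqref{udual} and \eqref{estcon}); and Lemma~\ref{lem:convo}\,(i) gives $\nu \ge \nu_0$. Without naming this mechanism---in particular the decoupling of the construction of $\nu_1$ from $\nu$ through $\CC$, which is what makes the argument non-circular---your sketch does not close.
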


The idea of the proof of Theorem \ref{thm:wel} is to solve
the corresponding integral equation to \eqref{dNLS}
as the fixed point of the operator
\begin{equation}
\G(\ft \phi, \ft u) (t,\xi)
:= e^{-it |\xi|^\al} \ft \phi (\xi) + \int_0^t e^{-i(t-t') |\xi|^\al} \Ft[u D^\be u](t', \xi) dt'
\label{ftint}
\end{equation}
on
\[
\Big\{
u \in C([0, 1]; \X^{0, \nu}) \mid
\sup_{0 \le t \le 1} \rho^{0,\nu}(u(t)) \le 2 b_0 \Big\}
\]
for some $\nu \in C^\up_0$.
Note that $\nu$ in Theorem \ref{thm:wel} depends on $\nu_0$ and $b_0$.

The following lemma plays an important role 
to prove Theorem \ref{thm:wel}.

\begin{lemma}
\label{lem:convo}
There exists a mapping
\[
\CC : C_0^\up \times C_{(0,1]}^\down \to C_0^\up
\]
satisfying the following:
For any $\nu_0 \in C_0^\up$, $\kappa \in C_{(0,1]}^\down$, set
\begin{equation*}
\nu_1 := \CC (\nu_0, \kappa).
\end{equation*}
Then, we have the following three conditions:
\begin{enumerate}
\item
$\kappa \nu_1 \in C_0^\up$
and $\nu_0 \le \kappa \nu_1$.

\item
For any $F, G \in \X^{0, \nu_1}$ and $l > 0$, we have
\begin{equation}
\rho^{0, \kappa \nu_1}_l (F \ast G) \le \rho^{0, \nu_1}_l (F) \rho^{0, \nu_1}_l (G).
\label{convo0}
\end{equation}

\item
Let $b \in C^\up_{[1,\infty)}$ satisfy $b(l) = b(0)$ as long as $\nu_0(l) \le \kappa(l)$.
Then,
$\CC ( b \nu_0, \kappa) \ge b \CC (\nu_0,\kappa)$.
\end{enumerate}
\end{lemma}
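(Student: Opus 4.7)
The plan is to construct $\nu_1 := \CC(\nu_0, \kappa)$ so that, in addition to $\kappa \nu_1 \ge \nu_0$, one has the pointwise Volterra-type inequality
\[
\int_0^l \nu_1(l-s)\, d\nu_1(s) \le \kappa(l)\,\nu_1(l), \qquad l > 0.
\]
A natural candidate is to define $\nu_1$ as the continuous non-decreasing solution vanishing at $0$ of the equation $\kappa(l)\,\nu_1(l) = \nu_0(l) + \int_0^l \nu_1(l-s)\,d\nu_1(s)$, built by the monotone iteration $\nu_1^{(0)} := \nu_0/\kappa$, $\nu_1^{(k+1)} := (\nu_0 + \nu_1^{(k)} * d\nu_1^{(k)})/\kappa$. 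On a small interval $[0,\dl]$ the iteration contracts because the Stieltjes convolution $\nu_1 * d\nu_1$ is of strictly higher order than $\nu_1$ itself near $l = 0$, producing a limit in $C_\dl^\up$. One then extends the definition to $[0, \infty)$ by continuation, majorizing as necessary to keep $\nu_1 \in C_0^\up$. Property (i) is built in: $\nu_0 \le \kappa \nu_1$ follows from the identity, while $\kappa \nu_1 \in C_0^\up$ follows from $\nu_0(0) = \nu_1(0) = 0$ and continuity.

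For (ii), observe first that any $F \in \X^{0, \nu_1}$ supported in $[0, \infty)$ is a complex Borel measure there with $|F|([0, r)) \le \rho_l^{0,\nu_1}(F)\, \nu_1(r)$ for all $r \le l$, and the bound $\nu_1(0) = 0$ rules out an atom at $0$ (and likewise for $G$). Starting from the standard convolution estimate $|F*G|([0, l)) \le \int_0^l |F|([0, l-\eta))\, d|G|(\eta)$, substitute the $|F|$-bound and apply Lebesgue--Stieltjes integration by parts to transfer the growth weight onto the $G$-integrator:
\[
|F*G|([0, l)) \le \rho_l^{0,\nu_1}(F)\,\rho_l^{0,\nu_1}(G) \int_0^l \nu_1(l-s)\, d\nu_1(s) \le \rho_l^{0,\nu_1}(F)\,\rho_l^{0,\nu_1}(G)\, \kappa(l)\,\nu_1(l),
\]
where the last step uses the defining Volterra inequality. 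Since the same reasoning applies at every scale $t \le l$ (with $\rho_t^{0,\nu_1} \le \rho_l^{0,\nu_1}$), we obtain $\rho_t^0(F*G) \le \rho_l^{0,\nu_1}(F)\, \rho_l^{0,\nu_1}(G)\, \kappa(t)\,\nu_1(t)$ for all $t \in (0, l]$, which is exactly the definition of $\rho_l^{0, \kappa\nu_1}(F*G) \le \rho_l^{0,\nu_1}(F)\, \rho_l^{0,\nu_1}(G)$.

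For (iii), I would compare the iterates defining $\CC(\nu_0, \kappa)$ and $\CC(b\nu_0, \kappa)$ step by step. The hypothesis $b(l) = b(0)$ on $\{l : \nu_0(l) \le \kappa(l)\}$ means $b$ is constant precisely on the regime where the iteration is safely controlled by $\nu_0/\kappa$, so scaling by $b$ commutes with the iteration there; outside that regime $b$ may grow, but this only pushes $\CC(b\nu_0, \kappa)$ further above $b\,\CC(\nu_0, \kappa)$ by monotone dependence of the Stieltjes convolution on its input. Passing the inequality through each iterate and then to the limit gives (iii).

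The principal difficulty is ensuring the global existence of $\nu_1 \in C_0^\up$ for arbitrary $\nu_0, \kappa$: the iteration can in principle blow up at finite $l$ when $\nu_0$ is not sufficiently small compared with $\kappa$, since the convolution integral is quadratic in $\nu_1$ while the left-hand side of the Volterra identity is only linear. Overcoming this likely requires either a cleverly truncated majorant, relaxing the Volterra equality to inequality at scales where blow-up threatens, or a direct variational construction of $\nu_1$ as the smallest element of $C_0^\up$ satisfying both the domination $\kappa \nu_1 \ge \nu_0$ and the Volterra inequality. Once $\nu_1$ has been secured, all three properties reduce to the integration-by-parts identity and a monotone-iteration argument.
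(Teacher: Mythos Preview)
The paper does not give an independent proof of this lemma: its entire argument is the citation ``See Lemma 3.4 in \cite{NaWa25} with $\mu_0 \equiv 0$. Note that $\mu_1 \equiv 0$ by (3.20) in \cite{NaWa25}.'' So there is no detailed proof in the paper to compare against; the construction of $\CC$ lives entirely in the Nakanishi--Wang paper.

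Your sketch is in a plausible direction---building $\nu_1$ so that a self-convolution inequality $\nu_1 * d\nu_1 \le \kappa \nu_1$ holds, then reading off (ii) from a Stieltjes integration-by-parts on the total-variation bound for $F*G$---and the observation that $\mu \equiv 0$ forces elements of $\X^{0,\nu_1}$ to be measures on each $[0,l)$ is correct and is exactly what makes the $\mu_0 \equiv 0$ specialization simpler. However, you yourself identify the real gap: the monotone iteration $\nu_1^{(k+1)} = (\nu_0 + \nu_1^{(k)} * d\nu_1^{(k)})/\kappa$ is quadratic in $\nu_1$ and there is no a priori reason it stays finite past the small interval where $\nu_0 \ll \kappa$. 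Your suggested fixes (truncated majorant, relaxing the Volterra equality to an inequality, or a variational ``smallest $\nu_1$'' construction) are not fleshed out, and this is precisely the nontrivial content of the Nakanishi--Wang construction. In their Lemma~3.4 the map $\CC$ is built explicitly and globally (in the more general setting with a nonzero regularity weight $\mu$), and the three properties are verified from that explicit formula rather than from an abstract fixed-point argument. So while your outline captures the right heuristics for why such a $\nu_1$ should exist and why (ii) follows, it does not close the existence gap, and a complete proof would have to either reproduce or cite the explicit construction in \cite{NaWa25}.
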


\begin{proof}
See Lemma 3.4 in \cite{NaWa25} with $\mu_0 \equiv 0$.
Note that $\mu_1 \equiv 0$ by (3.20) in \cite{NaWa25}.
\end{proof}

The key of this lemma for solving \eqref{dNLS} is that the left-hand side of \eqref{convo0}
include the small factor $\kappa$.  

The following lemma states that the boundedness of the linear evolution operator in $\Ft^{-1} \X^{0,\nu}$ for $\nu \in C_0^\up$.

\begin{lemma}
\label{lem:bd1}
Let $\nu \in C_0^\up$, $F \in \X^{0,\nu}$, $\al>0$, and $t \in \R$.
Set
\[
M(F):= e^{it |\cdot|^\al} F.
\]
Then, we have $M(F) \in \X^{0,\nu}$ and
\[
\rho_l^{0,\nu}(M(F))
= \rho_l^{0,\nu}(F)
\]
for any $l>0$.
\end{lemma}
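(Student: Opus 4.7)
The plan is to use that $|e^{it|\cdot|^\al}| \equiv 1$, so that multiplication by $e^{it|\cdot|^\al}$ preserves the total variation of any measure, combined with the identification of $\rho_t^0$ with a total variation on $[0,t)$.

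First I would unpack the measure-theoretic structure of $F \in \X^{0,\nu}$. Since $\rho_t^0(F) \le \rho^{0,\nu}(F)\,\nu(t) < \infty$ for every $t > 0$, homogeneity (with rotation of $\psi$ by unimodular constants) gives $|\dual{F}{\psi}| \le \rho_t^0(F)\|\psi\|_0^+$ for all $\psi \in \D_{<t}$. Hence $F$ annihilates test functions supported in $(-\infty,0)$, and via the density of $\D_{<t}|_{[0,\infty)}$ in $C_c([0,t))$ together with Riesz representation, $F|_{\D_{<t}}$ is integration against a finite $\C$-valued Borel measure $\fm_t$ on $[0,t)$. Uniqueness in Riesz then glues these into a locally finite Borel measure $\fm$ on $[0,\infty)$ supported on $(0,\infty)$ (the null mass at $\{0\}$ coming from Definition~\ref{def:X}), and the same duality gives
\[
\rho_t^0(F) = |\fm|([0,t)) \quad \text{for every } t > 0.
\]

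Next I would set $M(F) := e^{it|\cdot|^\al}\fm$. Because $\fm$ is supported on $(0,\infty)$, where $e^{it|\cdot|^\al}$ is smooth, bounded, and of modulus one, this product is a bona fide Borel measure with $|M(F)| = |\fm|$ as measures; in particular $M(F)$ fulfils the conditions of Definition~\ref{def:X} and lies in $\X$. Applying the same Riesz duality to $M(F)$ yields
\[
\rho_t^0(M(F)) = |M(F)|([0,t)) = |\fm|([0,t)) = \rho_t^0(F)
\]
for every $t > 0$. The identity at the level of $\rho_l^{0,\nu}$ is then immediate from its definition as $\inf\{B \ge 0 : \rho_s^0(\cdot) \le B\nu(s) \text{ for } 0 < s \le l\}$: the admissible sets for $B$ coincide for $F$ and $M(F)$, yielding $\rho_l^{0,\nu}(M(F)) = \rho_l^{0,\nu}(F)$, and in particular $M(F) \in \X^{0,\nu}$.

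The main obstacle I anticipate is the global measure identification in the first step, since Definition~\ref{def:X} only supplies a measure on some initial interval $(-\infty,a)$; one must upgrade this to a locally finite Borel measure on $[0,\infty)$ using the uniform control $\rho_t^0(F) < \infty$ for all $t$. Once this identification together with $\rho_t^0(F) = |\fm|([0,t))$ is in place, the conclusion is driven entirely by the unimodularity of $e^{it|\cdot|^\al}$ and does not depend on the specific exponent $\al > 0$.
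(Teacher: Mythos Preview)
Your argument is correct but takes a genuinely different route from the paper. The paper's proof is a short duality trick at the level of the seminorms: for $\psi\in\D_{<l}$ one writes $\dual{M(F)}{\psi}=\lim_{\dl\to0^+}\dual{F}{X_\dl\,e^{it|\cdot|^\al}\psi}$ (the cutoff $X_\dl$ near the origin makes the transposed test function smooth), and since $\|X_\dl\,e^{it|\cdot|^\al}\psi\|_0^+\le\|\psi\|_0^+$ by unimodularity one obtains $\rho_l^{0,\nu}(M(F))\le\rho_l^{0,\nu}(F)$ directly from \eqref{norm}; the reverse inequality then follows by applying the same bound to $\wt M=e^{-it|\cdot|^\al}$ and writing $F=\wt M(M(F))$. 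Your approach instead invests in a Riesz identification of every element of $\X^{0,\nu}$ with a locally finite Borel measure on $(0,\infty)$ together with the formula $\rho_s^0(F)=|\fm|([0,s))$, after which the result is immediate from the invariance of total variation under multiplication by a unimodular function. The paper's argument is shorter and needs no structure theory; yours buys a pleasant characterization of $\X^{0,\nu}$ as a space of measures, at the cost of heavier machinery for this particular lemma. One small cleanup: the vanishing of $|\fm|(\{0\})$ is most cleanly obtained from $\nu\in C_0^\up$ (so $\rho_s^0(F)\le\rho^{0,\nu}(F)\,\nu(s)\to0$ as $s\to0^+$) rather than by appeal to Definition~\ref{def:X}, which a priori only supplies the measure on some initial interval.
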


\begin{proof}
For $l>0$ and $\psi \in \D_{<l}$,
we have
\[
\dual{M(F)}{\psi}
=\lim_{\dl \to +0}
\dual{F}{X_\dl M(\psi)},
\]
where $X_\dl$ is defined in \eqref{X^_l}.
It follows from \eqref{norm} that
\[
|\dual{F}{X_\dl M(\psi)}|
\le
\nu (l) \rho_l^{0,\nu}(F) \| X_\dl M(\psi) \|_0^+
\le
\nu (l) \rho_l^{0,\nu}(F) \| \psi \|_0^+
\]
for any $l>0$ and $\dl>0$.
Hence, we obtain that
\[
\rho_l^{0,\nu}(M(F))
\le \rho_l^{0,\nu}(F)
\]
for $l>0$.
Moreover,
we set $\wt M(F) := e^{-it |\cdot|^\al} F$.
Since $F = \wt M (M(F))$,
we have
\[
\rho_l^{0,\nu}(F)
=
\rho_l^{0,\nu} ( \wt M (M(F)))
\le
\rho_l^{0,\nu} (M(F)).
\]
Therefore, we obtain that
\[
\rho_l^{0,\nu} (M(F))
=
\rho_l^{0,\nu}(F),
\]
which concludes the proof.
\end{proof}

\begin{proof}[Proof of Theorem \ref{thm:wel}]
We fix $\eps > 0$ small enough such that
\begin{equation}
\eps < \frac 1{8 b_0}.
\label{eps1}
\end{equation}
Define $\nu_\ast, \nu_1, \nu \in C_0^\up$ and $\kappa \in C^\down_{(0,1]}$ as 
\begin{equation}
\begin{aligned}
&\nu_\ast (l) := \eps^{-1} \nu_0(l), &&
\kappa (l) := \frac 1{1 + l^\be},
\\
&\nu_1(l) := \CC (\nu_\ast, \kappa) (l), &&
\nu (l) := \eps \kappa (l) \nu_1(l)
\end{aligned}
\label{nusR}
\end{equation}
for $l \ge 0$,
where $\CC$ is defined in Lemma \ref{lem:convo}. 

Set
\begin{equation}
Z := \Big\{\ft u \in C([0, 1]; \X^{0,\nu}) \mid \sup_{0 \le t \le 1} \rho^{0, \nu} (\ft u(t)) \le 2 b_0 \Big\}.
\label{spite}
\end{equation}
Let $\phi \in \Ft^{-1} \X^{0, \nu_0}$
satisfy $\rho^{0, \nu_0}(\ft \phi) \le b_0$.
We will show that
$\G(\ft \phi, \cdot)$ defined in \eqref{ftint} is a contraction mapping on $Z$.

For any $\ft u \in C([0,1], \X^{0,\nu})$ and $\psi \in \D_{<l}$,
it follows from \eqref{norm} and Lemma \ref{lem:bd1} that
\begin{equation*}
\begin{aligned}
\Re \dual{\G(\ft \phi, \ft u) (t)}{\psi} 
&\le
| \dual{e^{-it|\cdot|^\al} \ft \phi}{\psi} | + 
\Big|\Dual {\int_0^t e^{-i(t-t')|\cdot|^\al} \Ft[u D^\be u](t') dt'}{\psi}\Big| 
\\
&\le
\nu_0 (l) \rho_l^{0,\nu_0} ( \ft \phi) \| \psi \|_0^+
\\
&\quad
+
\int_0^t
\kappa (l) \nu_1 (l) \rho_l^{0,\kappa \nu_1}(\Ft [u D^\be u](t'))
\| \psi \|_0^+
dt'
.
\end{aligned}
\end{equation*}
With \eqref{rholm1} and $\rho^{0,\nu_0}(\ft \phi) \le b_0$, we obtain
\begin{equation}
\begin{aligned}
\rho_l^0 (\G(\ft \phi, \ft u) (t))
\le
\nu_0 (l) b_0 
+
\int_0^t
\kappa(l) \nu_1(l)
\rho_l^{0, \kappa \nu_1}(\Ft [u D^\be u](t')) dt'.
\end{aligned}
\label{estDuh}
\end{equation}

From \eqref{convo0} and \eqref{eps1}, we have
\begin{equation}
\begin{aligned}
\rho_l^{0, \kappa \nu_1}(\Ft [u D^\be u](t))
&=
\rho_l^{0, \kappa \nu_1} ( \ft u(t) \ast (|\cdot|^\be \ft u(t)) )
\\
&\le
\rho_l^{0, \nu_1} (\ft u(t)) \rho_l^{0, \nu_1} (|\cdot|^\be \ft u(t)).
\end{aligned}
\label{convo}
\end{equation}
By \eqref{norm} and \eqref{nusR}, we obtain
\begin{align*}
| \dual {\ft u(t)}{\psi} |
&\le
\nu(l) \rho_l^{0,\nu} (\ft u (t)) \|\psi\|_{0}^+
\\
&=
\eps \kappa (l) \nu_1 (l) \rho_l^{0,\nu} (\ft u (t)) \|\psi\|_0^+
\\
&\le
\eps \nu_1(l) \rho_l^{0,\nu} (\ft u (t)) \|\psi\|_0^+
\end{align*}
for $l>0$ and $\psi \in \D_{<l}$.
Hence,
it holds that
\begin{equation}
\rho_l^{0,\nu_1} (\ft u(t))
\le
\eps \rho_l^{0,\nu} (\ft u (t)).
\label{udual}
\end{equation}
Moreover, \eqref{norm} and \eqref{nusR} yield that
\begin{align*}
| \dual {|\cdot|^\be \ft u(t)}{\psi} |
&\le
\nu(l) \rho_l^{0,\nu} (\ft u(t)) \big\| |\cdot|^\be \psi \big\|_0^+
\\
&\le
l^\be \eps \kappa (l) \nu_1 (l) \rho_l^{0,\nu} (\ft u (t)) \|\psi\|_0^+
\\
&\le
\eps \nu_1(l) \rho_l^{0,\nu} (\ft u (t)) \|\psi\|_0^+
\end{align*}
for $l>0$ and $\psi \in \D_{<l}$.
Hence, it holds that
\begin{equation}
\rho_l^{0,\nu_1}(|\cdot|^\be \ft u(t))
\le
\eps \rho_l^{0,\nu} (\ft u (t)).
\label{xiudua}
\end{equation}
From \eqref{convo}--\eqref{xiudua},
we have
\begin{equation}
\rho_l^{0, \kappa \nu_1}(\Ft [u D^\be u](t))
\le
\eps^2 \big(\rho_l^{0, \nu} (\ft u(t)) \big)^2
\label{estcon}
\end{equation}
for any $l>0$ and $t \in [0,1]$.

It follows from \eqref{estDuh}, \eqref{estcon}, and \eqref{nusR}
that
\begin{equation*}
\begin{aligned}
\rho_l^0 (\G(\ft \phi, \ft u) (t))
&\le
\nu_0 (l) b_0 
+ 
\int_0^t \kappa(l) \nu_1(l)
\eps^2 \big(\rho^{0, \nu} (\ft u(t')) \big)^2 dt'
\\
&=
\nu_0 (l) b_0 
+
\eps \int_0^t \nu(l) \big(\rho^{0, \nu} (\ft u(t')) \big)^2 dt'.
\end{aligned}
\end{equation*}
Lemma \ref{lem:convo} (i) and \eqref{nusR} yield that
\[
\nu
= \eps \kappa \nu_1
\ge \eps \nu_\ast
= \nu_0.
\]
Hence,
we have
\[
\rho^{0, \nu}(\G(\ft \phi, \ft u) (t))
\le
b_0 + 4 \eps b_0^2.
\]
for $\ft u \in Z$ and $t \in [0,1]$,
where $Z$ is defined in \eqref{spite}.
By \eqref{eps1},
we obtain that
$\G(\ft \phi, \cdot)$ is a mapping on $Z$.

Let $\ft u_1, \ft u_2 \in Z$.
The same calculation as in \eqref{estDuh} yields that
\begin{equation}
\begin{aligned}
&\rho_l^0 (\G (\ft \phi, \ft u_1) (t) - \G (\ft \phi, u_2)(t))
\\
&\le
\int_0^t
\kappa(l) \nu_1(l)
\rho^{0, \kappa \nu_1}(\Ft [u_1 D^\be u_1 - u_2 D^\be u_2](t')) dt'.
\end{aligned}
\label{difDuh}
\end{equation}
By similar calculations as in \eqref{convo}--\eqref{estcon}, we have
\begin{equation}
\begin{aligned}
&\rho^{0, \kappa \nu_1}( \Ft [u_1 D^\be u_1 - u_2 D^\be u_2](t))
\\
&=
\rho^{0, \kappa \nu_1} (\Ft [(u_1 - u_2) D^\be u_1](t)) 
+ \rho^{0, \kappa \nu_1} (\Ft [u_2 D^\be (u_1 - u_2)](t)) 
\\
&\le
4 b_0 \eps^2 \rho^{0, \nu} (\ft u_1 (t) - \ft u_2 (t)).
\end{aligned}
\label{difcv}
\end{equation}
It follows \eqref{difDuh} and \eqref{difcv} that
\[
\sup_{0 \le t \le 1} \rho^{0, \nu} (\G (\ft \phi, \ft u_1) (t) - \G (\ft \phi, \ft u_2) (t))
\le
4 b_0 \eps \sup_{0 \le t \le 1} \rho^{0, \nu} (\ft u_1 (t) - \ft u_2 (t))
\]
for any $\ft u_1, \ft u_2 \in Z$.
By \eqref{eps1},
we obtain that
$\G (\ft \phi, \cdot)$ is a contraction mapping on $Z$.
Therefore,
there exists a unique solution $u$ to \eqref{dNLS} in $\Ft^{-1} Z$.
\end{proof}

%
\section{Ill-posedness on the real-line}\label{sec:illR}

In this section, we prove Theorem \ref{thm:ill}.
From \eqref{cond:be},
let $\ta$ satisfy
\begin{equation}
\max \Big( 0 , \al -1, \frac 23 (\be-1) \Big)
<\ta
< 2 \be .
\label{d:ta1}
\end{equation}
For $s \in \R$, $N \in \N$, and $0 < \eps \ll 1$, we set
\begin{equation}
\begin{aligned}
\phi 
&= \eps N^{\frac {\ta}2} \Ft^{-1} \big[ \ind_{[\frac 1{N^{\ta}}, \frac 2{N^{\ta}}]} \big]
+ \eps N^{- s+\frac {\ta}2} \Ft^{-1} \big[ \ind_{[N, N + \frac 1{N^{\ta}}]} \big]
\\
&:=
\phi_1 + \phi_2.
\end{aligned}
\label{defphi}
\end{equation}
A direct calculation yields that
\begin{equation}
\begin{aligned}
\|\phi\|_{H^s}
&\le
\eps \Big(\int_\R \jb{\xi}^{2s} \big(N^{\ta} \ind_{[\frac 1{N^{\ta}}, \frac 2{N^{\ta}}]}(\xi) 
+ N^{-2 s + \ta} \ind_{[N, N+\frac 1{N^{\ta}}]}(\xi) \big) d\xi \Big)^{\frac 12}
\\
&\les
\eps.
\end{aligned}
\label{phiHs}
\end{equation}
Thus, we obtain $\phi \in H^s(\R)$.
It follows from Lemma \ref{lem:supp} that $\phi \in \F^{-1} \X^{0,\nu_0}$,
where $\nu_0$ is defined in \eqref{nu}.

Set 
\begin{equation}
u^{(1)}(t) := e^{-itD^\al} \phi.
\label{defu1}
\end{equation}
For $k \ge 2$, we define
\begin{equation}
u^{(k)}(t):= \sum_{\substack {k_1, k_2 \in \N \\ k_1 + k_2 = k}} 
\int_0^t e^{-i(t-t')D^\al} (u^{(k_1)} D^\be u^{(k_2)}) dt'.
\label{defuk}
\end{equation}
By Theorem \ref{thm:wel},
there exists $\nu \in C_0^\up$ and
a solution
\[
u \in C([0,1]; \Ft^{-1} \X^{0,\nu})
\]
to \eqref{dNLS}.
Moreover,
$u$ is written as
\begin{equation}
u= \sum_{k=1}^\infty u^{(k)}.
\label{iter0}
\end{equation}
in $\Ft^{-1} Z$, where $Z$ defined in \eqref{spite} with $b_0 = \rho^{0,\nu_0}(\ft \phi)$.

For $\supp \Ft[u^{(k)}] (t,\cdot)$, the following lemma holds.

\begin{lemma}
\label{lem:suppuk}
Let $k \in \N$. For any $t \in [0, \infty)$, we have
\begin{equation}
\supp \Ft[u^{(k)}](t,\cdot) \subset 
\bigcup_{\substack{j_1, j_2 \in \N_0 \\ j_1 + j_2 = k}}
\Big[ j_2 N + \frac{j_1}{N^{\ta}} , \; j_2 N + \frac{k+ j_1}{N^{\ta}} \Big]. 
\label{suppuk}
\end{equation}
\end{lemma}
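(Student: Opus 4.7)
I would prove the lemma by induction on $k$, using the fact that the Fourier support of a product is contained in the Minkowski sum of the Fourier supports of the factors, and that neither the linear propagator $e^{-itD^\al}$, the fractional derivative $D^\be$, nor integration in time can enlarge the Fourier support.

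For the base case $k=1$, observe that $\Ft[u^{(1)}](t,\xi) = e^{-it|\xi|^\al}\ft\phi(\xi)$, so the support equals $\supp \ft\phi$, which by \eqref{defphi} is contained in $[\frac{1}{N^\ta},\frac{2}{N^\ta}]\cup [N, N+\frac{1}{N^\ta}]$. This is exactly the union on the right-hand side of \eqref{suppuk} when $k=1$, corresponding to $(j_1,j_2) \in \{(1,0),(0,1)\}$.

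For the inductive step, assume \eqref{suppuk} holds for all $k' < k$, and fix $k_1,k_2 \in \N$ with $k_1+k_2 = k$. Since $e^{-i(t-t')|\xi|^\al}$ is a modulus-one Fourier multiplier and the time integral in \eqref{defuk} does not enlarge the Fourier support, it suffices to control $\supp \Ft[u^{(k_1)} D^\be u^{(k_2)}](t',\cdot)$ for each $t' \in [0,t]$. Writing this Fourier transform (up to a constant) as the convolution $\Ft[u^{(k_1)}](t',\cdot) \ast \big(|\cdot|^\be \Ft[u^{(k_2)}](t',\cdot)\big)$ and using that multiplication by $|\xi|^\be$ preserves support, the induction hypothesis gives the containment in the Minkowski sum
\[
\bigcup_{\substack{j_1+j_2 = k_1 \\ j_3+j_4 = k_2}} \Big[j_2 N + \tfrac{j_1}{N^\ta},\, j_2 N + \tfrac{k_1 + j_1}{N^\ta}\Big] + \Big[j_4 N + \tfrac{j_3}{N^\ta},\, j_4 N + \tfrac{k_2 + j_3}{N^\ta}\Big].
\]
A direct computation shows each such summand equals $\big[(j_2+j_4)N + \frac{j_1+j_3}{N^\ta},\, (j_2+j_4)N + \frac{k+(j_1+j_3)}{N^\ta}\big]$. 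Setting $J_1 := j_1+j_3$ and $J_2 := j_2+j_4$, we have $J_1+J_2 = k_1+k_2 = k$, so this interval is one of the intervals on the right-hand side of \eqref{suppuk}. Summing over $k_1+k_2=k$ in \eqref{defuk} therefore keeps the Fourier support inside the claimed union, completing the induction.

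The argument is essentially bookkeeping; the only point requiring any care is to justify the support-of-a-convolution statement at the level of the distribution $\Ft[u^{(k)}](t,\cdot) \in \X$, but this follows because the iterates are constructed precisely in the space $\Ft^{-1} Z$ from Theorem \ref{thm:wel}, where convolutions are controlled by Lemma \ref{lem:convo} and hence the standard Minkowski-sum property of supports applies. No genuine obstacle arises beyond correctly tracking the parametrization $(j_1,j_2,j_3,j_4)\mapsto(J_1,J_2)$.
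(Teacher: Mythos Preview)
Your proposal is correct and follows essentially the same approach as the paper: both argue by induction on $k$, verify the base case directly from the definition of $\phi$, and in the inductive step compute the Minkowski sum of the Fourier supports of $u^{(k_1)}$ and $u^{(k_2)}$ to land in the claimed union. The only difference is cosmetic---the paper parametrizes the intervals by a single index $l_j$ (writing $l_j N + \frac{k_j - l_j}{N^\ta}$ for the left endpoint) rather than your pair $(j_1,j_2)$---and your closing remark about justifying the convolution-support property in $\X$ is a bit more careful than the paper, which takes this for granted.
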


\begin{proof}
We prove \eqref{suppuk} by an induction argument.
When $k=1$, it follows from \eqref{defphi} and \eqref{defu1} that \eqref{suppuk} holds.
Suppose that \eqref{suppuk} holds up to $k-1$ for $k \ge 2$.
Let $k_1, k_2 \in \N$ satisfy $k_1+k_2=k$.
By assumption,
\begin{align*}
\supp \Ft[u^{(k_j)}](t,\cdot)
&\subset
\bigcup_{l_j=0}^{k_j}
\Big[ l_j N + \frac {k_j -l_j}{N^{\ta}}, \; l_j N + \frac{2 k_j - l_j}{N^{\ta}} \Big]
\end{align*}
for $j=1,2$.
It follows that
\begin{equation*}
\begin{aligned}
&\supp
\F \big[ u^{(k_1)} D^\be u^{(k_2)} \big] (t, \cdot)
\\
&\subset
\bigcup_{\substack{0 \le l_1 \le k_1 \\ 0 \le l_2 \le k_2}} 
\Big[ (l_1 + l_2) N + \frac{k - (l_1 + l_2)}{N^{\ta}}, (l_1 + l_2) N + \frac{2k - (l_1 + l_2)}{N^{\ta}} \Big]
\\
&=
\bigcup_{\substack{j_1, j_2 \in \N_0 \\ j_1 + j_2 = k}}
\Big[ j_2 N + \frac{j_1}{N^{\ta}} , \; j_2 N + \frac{k+ j_1}{N^{\ta}} \Big].
\end{aligned}
\end{equation*}
With \eqref{defuk},
we have
\begin{equation*}
\begin{aligned}
\supp \Ft[u^{(k)}](t,\cdot)
&\subset
\bigcup_{\substack {k_1, k_2 \in \N \\ k_1 + k_2 = k}}
\F \big[ u^{(k_1)} D^\be u^{(k_2)} \big] (t, \cdot)
\\
&\subset
\bigcup_{\substack{j_1, j_2 \in \N_0 \\ j_1 + j_2 = k}}
\Big[ j_2 N + \frac{j_1}{N^{\ta}} , \; j_2 N + \frac{k+ j_1}{N^{\ta}} \Big]
\end{aligned}
\end{equation*}
for any $t \in [0,\infty)$.
This yields that \eqref{suppuk} holds.
\end{proof}

We also define
\begin{equation}
u_1^{(1)}(t) := e^{-itD^\al} \phi_1,
\label{defuk11}
\end{equation}
where $\phi_1$ is defined in \eqref{defphi}.
For $k \ge 2$, we define
\begin{equation}
u_1^{(k)}(t):= \sum_{\substack {k_1, k_2 \in \N \\ k_1 + k_2 = k}} 
\int_0^t e^{-i(t-t')D^\al} (u_1^{(k_1)} D^\be u_1^{(k_2)}) dt'.
\label{defuk12}
\end{equation}
Let $k \in \N$ and $\xi \in \big[ N + \frac {k-1}{N^{\ta}}, N + \frac k{N^{\ta}} \big)$.
Lemma \ref{lem:suppuk} with \eqref{iter0} yields that 
\begin{equation}
\begin{aligned}
\ft u(t,\xi)
&= \sum_{l = \floor {\frac k2}}^k 
\Ft [u^{(l)}](t,\xi)
+
\sum_{l = \floor{\frac{k-1+N^{\ta + 1}}2}}^{\ceil{k+N^{\ta + 1}}} \Ft[u_1^{(l)}](t,\xi)
\end{aligned}
\label{inter}
\end{equation}
for $N \gg 1$.

We estimate $|\Ft [u_1^{(k)}] (t,\xi)|$ for any $k \in \N$.

\begin{proposition}
\label{prop:u1k}
For $t > 0$, $\xi \in \R$, $k \in \N$, and $N \in \N$,
we have
\[
\begin{aligned}
| \Ft[u_1^{(k)}](t,\xi)|
&\le
\eps
(\pi^2 2^\be \eps t )^{k-1}
((k-1)!)^{\max (0, \be-1)}
\\
&\quad
\times
N^{\ta \be + \ta ( -\be + \frac 12) k}
\ind_{[\frac 1{N^{\ta}}, \frac {2}{N^{\ta}}]}^{\ast (k)}(\xi).
\end{aligned}
\]
\end{proposition}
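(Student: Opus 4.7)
The natural strategy is induction on $k$. For the base case $k=1$, since $u_1^{(1)}(t) = e^{-itD^\al}\phi_1$, the formula \eqref{defphi} yields $|\ft{u_1^{(1)}}(t,\xi)| = \eps N^{\ta/2}\,\ind_{[1/N^\ta, 2/N^\ta]}(\xi)$, which matches the right-hand side of the proposition at $k=1$ because $\ta\be + \ta(-\be + \tfrac{1}{2}) = \tfrac{\ta}{2}$.

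For the inductive step, fix $k \ge 2$ and assume the bound holds for all $1 \le j \le k-1$. Taking the Fourier transform of \eqref{defuk12} and using $\ft{fg} = \frac{1}{\sqrt{2\pi}}\,\ft{f}\ast\ft{g}$, I would obtain
\[
\ft{u_1^{(k)}}(t,\xi) = \frac{1}{\sqrt{2\pi}} \sum_{\substack{k_1+k_2=k\\ k_1,k_2\ge 1}} \int_0^t e^{-i(t-t')|\xi|^\al}\,\Bigl(\ft{u_1^{(k_1)}}(t',\cdot)\ast\bigl(|\cdot|^\be\ft{u_1^{(k_2)}}(t',\cdot)\bigr)\Bigr)(\xi)\,dt'.
\]
Discarding the unimodular phase, applying the induction hypothesis, and using that $\supp \ft{u_1^{(k_2)}}(t',\cdot) \subset [k_2/N^\ta, 2k_2/N^\ta]$ (established by the same induction as Lemma \ref{lem:suppuk}, specialized to iterates built from $\phi_1$ alone), I bound $|\eta|^\be \le (2k_2)^\be N^{-\ta\be}$ on the relevant support. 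Combined with $\ind^{\ast(k_1)}\ast\ind^{\ast(k_2)} = \ind^{\ast(k)}$ and the time integral $\int_0^t (t')^{k-2}\,dt' = t^{k-1}/(k-1)$, the $N$-exponents collapse (since $2\ta\be - \ta\be + \ta(-\be+\tfrac{1}{2})k = \ta\be + \ta(-\be+\tfrac{1}{2})k$) to yield
\[
|\ft{u_1^{(k)}}(t,\xi)| \le \frac{\eps(\pi^2 2^\be \eps t)^{k-1} N^{\ta\be + \ta(-\be+\frac{1}{2})k}}{\sqrt{2\pi}\,(k-1)\pi^2}\,\ind^{\ast(k)}(\xi)\,\Sigma_k,
\]
where $\Sigma_k := \sum_{k_1+k_2=k,\; k_1,k_2\ge 1} k_2^\be\bigl((k_1-1)!(k_2-1)!\bigr)^{\max(0,\be-1)}$.

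The proof thus reduces to the combinatorial inequality $\Sigma_k \le \sqrt{2\pi}\,\pi^2\,(k-1)\,((k-1)!)^{\max(0,\be-1)}$, which is the main obstacle. When $\be > 1$, this is the easier regime: writing $(k_1-1)!(k_2-1)! = (k-2)!/\binom{k-2}{k_1-1}$ and raising to the power $\be-1$, the sum is dominated by the endpoint pairs $k_1 \in \{1,k-1\}$, which contribute on the order of $(1 + (k-1)^\be)((k-2)!)^{\be-1}$, while the interior terms are strongly damped by $\binom{k-2}{k_1-1}^{-(\be-1)}$; comparing with $((k-1)!)^{\be-1} = (k-1)^{\be-1}((k-2)!)^{\be-1}$ then gives the desired bound with ample room under the constant $\sqrt{2\pi}\,\pi^2$. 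When $\be \le 1$, the factorial factor on the right-hand side is trivial and one needs a more delicate estimate; I expect the key is to sharpen the pointwise bound on $|\eta|^\be\,\ind^{\ast(k_2)}(\eta)$ beyond the crude support estimate $(2k_2/N^\ta)^\be$, exploiting the concentration of $\ind^{\ast(k_2)}$ near its midpoint $3k_2/(2N^\ta)$ with width of order $\sqrt{k_2}/N^\ta$, so as to recover the additional smallness needed to close the induction uniformly in $k$.
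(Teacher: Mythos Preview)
Your inductive framework is right, but the direct induction with the final constant does not close. The difficulty is most transparent when $0 < \be \le 1$: there the factorial factor is absent and your required inequality becomes
\[
\Sigma_k \;=\; \sum_{k_2=1}^{k-1} k_2^{\be} \;\le\; \sqrt{2\pi}\,\pi^2\,(k-1),
\]
which is false for large $k$ since the left side grows like $(k-1)^{1+\be}$. Your proposed fix---refining the pointwise bound on $|\eta|^\be \ind^{\ast(k_2)}(\eta)$ via the concentration of $\ind^{\ast(k_2)}$---cannot help: on the support of $\ind^{\ast(k_2)}$ one has $|\eta|^\be \ge (k_2/N^\ta)^\be$, so $\ind^{\ast(k_1)} \ast \bigl(|\cdot|^\be \ind^{\ast(k_2)}\bigr) \ge (k_2/N^\ta)^\be\,\ind^{\ast(k)}$ pointwise, and the factor $k_2^\be$ in $\Sigma_k$ is unavoidable. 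The same obstruction persists for $\be$ slightly above $1$ by continuity, so your endpoint-dominance heuristic is not uniform in $\be$ either.

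The underlying phenomenon is that a bound $a_k \le C^{k-1}$ cannot be obtained by direct induction from a quadratic recursion $a_k \le C_0 \sum_{k_1+k_2=k} a_{k_1} a_{k_2}$: substituting the ansatz forces $C \ge C_0(k-1)$, which fails for any fixed $C$. The paper avoids this by decoupling the two tasks. First one proves, by exactly your induction, that
\[
|\Ft[u_1^{(k)}](t,\xi)| \;\le\; a_k\,\eps\,(\eps t)^{k-1}\,N^{\ta\be + \ta(-\be+\frac12)k}\,\ind_{[\frac{1}{N^\ta},\frac{2}{N^\ta}]}^{\ast(k)}(\xi),
\]
where $a_1 = 1$ and $a_k := \sum_{k_1+k_2=k} \frac{(2k_2)^\be}{k-1}\,a_{k_1}a_{k_2}$. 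Then one bounds $a_k$ separately: setting $b_k := a_k/((k-1)!)^{\max(0,\be-1)}$ and using $\frac{k_2}{k-1}\le 1$ together with $(k_1-1)!\,k_2! \le (k-1)!$, one checks that $b_k \le 2^\be \sum_{k_1+k_2=k} b_{k_1} b_{k_2}$. The exponential bound $b_k \le (\pi^2 2^\be)^{k-1}$ then follows from the Catalan-type Lemma~\ref{lem:bdda}, whose proof is \emph{not} a naive induction but exploits the global structure of the recursion. This two-step decoupling is the missing ingredient.
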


For the proof, we use the following lemma.

\begin{lemma}
\label{lem:bdda}
Let $\{ a_k \}_{k \in \N}$ be a sequence of positive real numbers.
Assume that there exists $C_0>0$ such that
\[
a_k \le C_0 \sum_{\substack{k_1,k_2 \in \N \\ k_1+k_2=k}} a_{k_1} a_{k_2}
\]
holds for $k \ge 2$.
Then, we have
\[
a_k \le \Big( \frac 23 \pi^2 C_0 \Big)^{k-1} a_1^k
\]
for any $k \in \N$.
\end{lemma}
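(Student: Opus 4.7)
The plan is to prove a slightly stronger estimate by induction, namely
\[
a_k \leq \frac{(\tfrac{2}{3}\pi^2 C_0)^{k-1} a_1^k}{k^2},
\]
from which the claimed inequality follows since $k^2 \geq 1$. A quick rescaling $b_k := a_k/(C_0^{k-1} a_1^k)$ reduces matters to the normalized case $C_0 = a_1 = 1$, in which $b_1 = 1$ and $b_k \leq \sum_{k_1+k_2=k} b_{k_1} b_{k_2}$ for $k \geq 2$; there one wants $b_k \leq M^{k-1}/k^2$ with $M = \tfrac{2}{3}\pi^2$.

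The base case $k=1$ is an equality. In the inductive step, substituting the hypothesis into the recursion immediately reduces everything to the purely combinatorial estimate
\begin{equation}
\sum_{\substack{k_1,k_2 \in \N \\ k_1+k_2=k}} \frac{1}{k_1^2 k_2^2} \leq \frac{\tfrac{2}{3}\pi^2}{k^2} \qquad (k \geq 2).
\label{plan:key}
\end{equation}
This is where all the actual work lives, and where I expect the main (mild) obstacle.

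To prove \eqref{plan:key} I would exploit the partial fraction $\tfrac{1}{k_1 k_2} = \tfrac{1}{k}\bigl(\tfrac{1}{k_1}+\tfrac{1}{k_2}\bigr)$, valid when $k_1+k_2=k$. Squaring and summing yields the closed form
\[
k^2 \sum_{k_1+k_2=k} \frac{1}{k_1^2 k_2^2} \;=\; 2 H_{k-1}^{(2)} + \frac{4 H_{k-1}}{k},
\]
with $H_n := \sum_{j=1}^n 1/j$ and $H_n^{(2)} := \sum_{j=1}^n 1/j^2$; the cross term is evaluated via the standard identity $\sum_{k_1=1}^{k-1} \tfrac{1}{k_1(k-k_1)} = \tfrac{2H_{k-1}}{k}$, itself an application of the same partial fraction. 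The first summand is controlled by the Basel bound $H_{k-1}^{(2)} \leq \zeta(2) = \pi^2/6$, contributing $\pi^2/3$. For the second, I would use the elementary inequality $H_{k-1} \leq k/2$ for $k \geq 2$ (equality at $k = 2, 3$, and thereafter a one-line induction using $1/k \leq 1/2$), which gives $\tfrac{4H_{k-1}}{k} \leq 2$. Since $2 \leq \pi^2/3$, the two contributions add up to the target constant $\tfrac{2}{3}\pi^2$.

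Once \eqref{plan:key} is in hand, the induction closes cleanly, and undoing the rescaling $a_k = C_0^{k-1} a_1^k b_k$ yields the lemma. No conceptual difficulty is expected beyond verifying the two one-line numerical estimates above.
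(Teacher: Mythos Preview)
Your proof is correct; the rescaling, the $1/k^2$-weighted induction, and the closed-form evaluation via partial fractions all check out, and the numerical bounds $H_{k-1}^{(2)}\le\pi^2/6$ and $H_{k-1}\le k/2$ combine exactly to the constant $\tfrac23\pi^2$. The paper itself does not supply a proof but cites \cite{MaOk16,Kis19}, and your argument is precisely the standard one found in those references---indeed the very appearance of $\tfrac23\pi^2$ in the statement is a fingerprint of this $1/k^2$-weighting method.
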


See, for example, Lemma 4.2 in \cite{MaOk16}.
See also Lemma 3.5 in \cite{Kis19}.

\begin{proof}[Proof of Proposition \ref{prop:u1k}]
Let $\{ a_k \}_{k \in \N}$ be a sequence of positive real numbers such that
$a_1=1$ and
\begin{equation}
a_k
:=
\sum_{\substack{k_1,k_2 \in \N \\ k_1+k_2=k}}
\frac{(2 k_2)^\be}{k-1} a_{k_1} a_{k_2}
\label{seq:ak1}
\end{equation}
for $k \ge 2$.
We show that
\begin{equation}
\begin{aligned}
| \Ft[u_1^{(k)}](t,\xi)|
&\le
a_k
\eps
(\eps t )^{k-1}
N^{\ta \be + \ta ( -\be + \frac 12) k}
\ind_{[\frac 1{N^{\ta}}, \frac {2}{N^{\ta}}]}^{\ast (k)}(\xi)
\end{aligned}
\label{u1bd2}
\end{equation}
for $t >0$, $\xi \in \R$, and $k \in \N$.

When $k=1$,
\eqref{u1bd2} holds by \eqref{defuk11} and \eqref{defphi}.
Suppose that \eqref{u1bd2} holds up to $k-1$ for $k \ge 2$.
It follows from \eqref{defuk12}, \eqref{seq:ak1}, and the induction hypothesis that
\begin{equation*}
\begin{aligned}
&|\Ft[u_1^{(k)}](t,\xi)|
\\
&\le
\sum_{\substack {k_1, k_2 \in \N \\ k_1 + k_2 = k}}
\int_0^t \int_\R |\Ft[u_1^{(k_1)}](t',\xi - \eta) \cdot \eta^\be \Ft[u_1^{(k_2)}](t', \eta)| d \eta dt'
\\
&\le
\sum_{\substack {k_1, k_2 \in \N \\ k_1 + k_2 = k}}
\int_0^t \int_\R
a_{k_1}
\eps
( \eps t')^{k_1-1}
N^{\ta \be + \ta ( -\be + \frac 12) k_1}
\ind_{[\frac 1{N^{\ta}}, \frac {2}{N^{\ta}}]}^{\ast (k_1)}(\xi-\eta)
\\
&\hspace*{50pt} \times
\eta^\be
a_{k_2}
\eps
( \eps t')^{k_2-1}
N^{\ta \be + \ta ( -\be + \frac 12) k_2}
\ind_{[\frac 1{N^{\ta}}, \frac {2}{N^{\ta}}]}^{\ast (k_2)}(\eta)
d\eta dt'
\\
&\le
a_k \eps (\eps t)^{k-1}
N^{\ta \be + \ta ( -\be + \frac 12) k}
\ind_{[\frac 1{N^{\ta}}, \frac {2}{N^{\ta}}]}^{\ast (k)}(\xi)
.
\end{aligned}
\end{equation*}
Hence, we obtain \eqref{u1bd2} for $k \in \N$.

From \eqref{u1bd2},
it remains to show that
\begin{equation}
a_k
\le
( \pi^2 2^\be )^{k-1}
((k-1)!)^{\max (0,\be-1)}
\label{eq:adbbb0}
\end{equation}
for $k \in \N$.
Set
\[
b_k := \frac{a_k}{((k-1)!)^{\max (0,\be-1)}}.
\]
With \eqref{seq:ak1}, we have
\begin{align*}
b_k
&=
2^\be
\sum_{\substack{k_1,k_2 \in \N \\ k_1+k_2=k}}
\frac{k_2^\be}{k-1} 
\Big( \frac{(k_1-1)! \cdot (k_2-1)!}{(k-1)!} \Big)^{\max (0,\be-1)}
b_{k_1} b_{k_2}
\\
&\le
2^\be
\sum_{\substack{k_1,k_2 \in \N \\ k_1+k_2=k}}
\frac{k_2}{k-1}
\Big( \frac{(k_1-1)! \cdot k_2!}{(k-1)!} \Big)^{\max (0,\be-1)}
b_{k_1} b_{k_2}.
\end{align*}
When $k_1, k_2 \in \N$ with $k_1 + k_2 = k$,
we have
\[
(k_1-1)! \cdot k_2!
\le (k-1)!.
\]
Hence, it holds that
\begin{align*}
b_k
&\le
2^\be
\sum_{\substack{k_1,k_2 \in \N \\ k_1+k_2=k}}
b_{k_1} b_{k_2}.
\end{align*}
Lemma \ref{lem:bdda} with $C_0= 2^\be$ yields that
\[
b_k
\le
( \pi^2 2^\be )^{k-1}.
\]
Therefore, we obtain that
\[
a_k
\le
( \pi^2 2^\be )^{k-1}
((k-1)!)^{\max (0,\be-1)},
\]
which shows \eqref{eq:adbbb0}.
This concludes the proof of Proposition \ref{prop:u1k}.
\end{proof}

The following proposition is the key to get the lower bound of the first term in \eqref{inter}. 

\begin{proposition}
\label{prop:Ftuk}
Let $\al>0$ and $k \in \N$.
Then,
we have
\begin{equation}
\begin{aligned}
&\bigg|
\Ft[u^{(k)}](t,\xi) 
\\
&\quad
-
e^{-it\xi^\al} 
\frac {\eps^k t^{k-1}}{(k-1)!} N^{- s+ (\frac \ta2 + \be) k- \be}
\big( \ind_{[\frac 1{N^{\ta}}, \frac 2{N^{\ta}}]}^{\ast(k-1)} 
\ast \ind_{[N, N+\frac 1{N^{\ta}}]} \big) (\xi)
\bigg|
\\
&\les
\max \big( N^{-(\ta + 1)}, N^{-(\ta + 1) \be}, t \big)
\eps^k t^{k-1} N^{-s + (\frac {\ta}2 + \be) k - \be}
\\
&\hspace*{150pt}
\times
\big( \ind_{[\frac 1{N^{\ta}}, \frac 2{N^{\ta}}]}^{\ast(k-1)} 
\ast \ind_{[N, N+\frac 1{N^{\ta}}]}\big) (\xi)
\end{aligned}
\label{orduk}
\end{equation}
for $N \gg 1$, $0< t \ll 1$,
and $\xi \in [ N + \frac{k-1}{N^{\ta}}, N + \frac {2k-1}{N^{\ta}}]$.
Here, the implicit constant in \eqref{orduk} depends on $k$.
\end{proposition}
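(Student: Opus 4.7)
The plan is to proceed by induction on $k$. The base case $k=1$ is immediate from \eqref{defu1} and \eqref{defphi}: for $\xi\in[N,N+1/N^{\ta}]$ we have $\ft u^{(1)}(t,\xi)=e^{-it|\xi|^\al}\ft\phi_2(\xi)=e^{-it|\xi|^\al}\eps N^{-s+\ta/2}\ind_{[N,N+1/N^{\ta}]}(\xi)$, which coincides with the claimed main term, so the error is zero.

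For the inductive step, assuming the proposition holds up to $k-1$, I would start from \eqref{defuk} and expand
\[
\ft u^{(k)}(t,\xi)=\sum_{k_1+k_2=k}\int_0^t e^{-i(t-t')|\xi|^\al}\bigl(\ft u^{(k_1)}(t')\ast(|\cdot|^\be\ft u^{(k_2)}(t'))\bigr)(\xi)\,dt'.
\]
For $\xi$ in the designated interval, Lemma \ref{lem:suppuk} forces the convolution variable $\eta$ to lie either in a small neighborhood of $0$ (``Term A'', with $\xi-\eta$ near $N$) or of $N$ (``Term B'', with $\xi-\eta$ near $0$). In each case, the near-zero factor is the $j_2=0$ component of the iterated expansion, which by construction equals $\ft u_1^{(\cdot)}$ and is controlled by Proposition \ref{prop:u1k}; the near-$N$ factor is handled by the induction hypothesis.

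The leading contribution arises from Term B with $(k_1,k_2)=(1,k-1)$: here $\ft u^{(1)}(t',\xi-\eta)=e^{-it'|\xi-\eta|^\al}\ft\phi_1(\xi-\eta)$ is explicit, and $\ft u^{(k-1)}(t',\eta)$ on $\eta$ near $N$ is replaced by its inductive main term. After pulling out $e^{-it|\xi|^\al}$, the residual phase $e^{it'(|\xi|^\al-|\xi-\eta|^\al-|\eta|^\al)}$ differs from $1$ by $O(t'(N^{\al-1-\ta}+N^{-\ta\al}))=O(t)$, using $\ta>\al-1$ from \eqref{d:ta1}. Since $\eta=N+O(k/N^{\ta})$, we also have $\eta^\be=N^\be(1+O(N^{-(\ta+1)}))$. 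Replacing by the leading values, the $t'$-integration yields $\int_0^t(t')^{k-2}/(k-2)!\,dt'=t^{k-1}/(k-1)!$ and the $\eta$-convolution collapses to $\eps N^{\ta/2}(\ind_{[1/N^{\ta},2/N^{\ta}]}^{\ast(k-1)}\ast\ind_{[N,N+1/N^{\ta}]})(\xi)$; collecting powers of $N$ reproduces the exponent $-s+(\ta/2+\be)k-\be$ of \eqref{orduk}.

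The error is then the sum of three types of contributions, each bounded by the main term times one of $N^{-(\ta+1)}$, $N^{-(\ta+1)\be}$, or $t$: the $\eta^\be\to N^\be$ replacement yields $N^{-(\ta+1)}$; the phase replacement yields $t$; and every other $(k_1,k_2)$-pair (all of Term A, together with Term B for $k_1\ge2$) gains a relative factor $\le N^{-\be(\ta+1)}$, as a direct comparison of $N$-exponents shows, using the inductive main term for the near-$N$ factor and Proposition \ref{prop:u1k} for the $u_1$-factor (specifically the difference between the $N$-power of the $(k_1,k_2)$-contribution and that of the main term evaluates to $-(k_1-1)\be(\ta+1)$ in Term B and to $-k_2\be(\ta+1)$ in Term A). The principal technical obstacle is this last step: one must tabulate the $N$-exponents for each $(k_1,k_2)$ and each of Terms A/B, verify that only Term B with $k_1=1$ is leading, and absorb the factorial losses $((k_j-1)!)^{\max(0,\be-1)}$ from Proposition \ref{prop:u1k} into the $k$-dependent implicit constant promised by the statement.
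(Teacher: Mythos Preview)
Your proposal is correct and follows essentially the same route as the paper: both proceed by induction on $k$, both identify the single interaction $u_1^{(1)} D^\be u_2^{(k-1)}$ (your Term B with $(k_1,k_2)=(1,k-1)$, the paper's term $\I$) as the leading contribution, approximate its phase by $1$ and its multiplier $\eta^\be$ by $N^\be$ to produce the errors $t$ and $N^{-(\ta+1)}$, and bound all remaining pairings (your Term A and Term B with $k_1\ge 2$, the paper's terms $\III$ and $\II$) via Proposition \ref{prop:u1k} plus the inductive hypothesis to gain the relative factor $N^{-(\ta+1)\be}$. The paper formalizes your ``near-$0$''/``near-$N$'' split by writing $u^{(l)}=u_1^{(l)}+u_2^{(l)}$ and noting that $u_2\times u_2$ does not contribute at $\xi$ near $N$, but otherwise the decomposition, the exponent bookkeeping, and the absorption of $k$-dependent constants are identical.
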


\begin{proof}
We prove \eqref{orduk} by an induction argument.
From \eqref{defu1} and \eqref{defphi}, we have that \eqref{orduk} with $k=1$ holds.
Suppose that \eqref{orduk} holds up to $k-1$ for $k \ge 2$.

Set
\[
u^{(k)}_2 := u^{(k)} - u^{(k)}_1.
\]
When $N \gg 1$
and $\xi \in [ N + \frac{k-1}{N^{\ta}}, N + \frac {2k-1}{N^{\ta}}]$,
we have
\begin{equation}
\begin{aligned}
\Ft[ u^{(k)}] (t,\xi)
&=\Ft[ u^{(k)}_2] (t,\xi)
\\
&=
\sum_{l=1}^{k-1}
\int_0^t e^{-i(t-t') \xi^\al} \Ft \big[ u_1^{(l)} D^\be u_2^{(k-l)} \big](t', \xi) dt'
\\
&\quad
+
\sum_{l=1}^{k-1}
\int_0^t e^{-i(t-t') \xi^\al} \Ft \big[ u_2^{(l)} D^\be u_1^{(k-l)} \big](t', \xi) dt'
\\
&=
\int_0^t e^{-i(t-t') \xi^\al} \Ft \big[ u_1^{(1)} D^\be u_2^{(k-1)} \big](t', \xi) dt'
\\
&\quad
+
\sum_{l=2}^{k-1}
\int_0^t e^{-i(t-t') \xi^\al} \Ft \big[ u_1^{(l)} D^\be u_2^{(k-l)} \big](t', \xi) dt'
\\
&\quad
+
\sum_{l=1}^{k-1}
\int_0^t e^{-i(t-t') \xi^\al} \Ft \big[ u_2^{(l)} D^\be u_1^{(k-l)} \big](t', \xi) dt'
\\
&=:
\I (t,\xi)
+ \II (t,\xi)
+ \III (t,\xi).
\end{aligned}
\label{Ftukaa}
\end{equation}

First, we prove that
\begin{equation}
\begin{aligned}
&\bigg|
\I(t,\xi)
-
e^{-it\xi^\al}
\frac {\eps^k t^{k-1}}{(k-1)!} N^{- s+ (\frac \ta2 + \be) k- \be}
\big( \ind_{[\frac 1{N^{\ta}}, \frac 2{N^{\ta}}]}^{\ast(k-1)} 
\ast \ind_{[N, N+\frac 1{N^{\ta}}]} \big) (\xi)
\bigg|
\\
&\les
\max \big( N^{-(\ta + 1)}, N^{-(\ta + 1) \be}, t \big)
\eps^{k} t^{k-1}
N^{-s + (\frac {\ta}2 + \be) k - \be}
\\
&\hspace*{150pt}
\times
\big( \ind_{[\frac 1{N^{\ta}}, \frac 2{N^{\ta}}]}^{\ast(k-1)} 
\ast \ind_{[N, N+\frac 1{N^{\ta}}]}\big) (\xi)
\end{aligned}
\label{ordukaa}
\end{equation}
for $0< t \ll 1$ and $\xi \in [ N + \frac{k-1}{N^{\ta}}, N + \frac {2k-1}{N^{\ta}}]$.
Set
\[
\begin{aligned}
\I_1(t,\xi)
&:=
e^{-it\xi^\al}
\int_0^t \int_\R e^{it'(\xi^\al -(\xi-\eta)^\al - \eta^\al)} 
\eps N^{\frac {\ta}2} \ind_{[\frac 1{N^{\ta}}, \frac 2{N^{\ta}}]}(\xi-\eta)
\\
&\hspace*{50pt}
\times
\eta^\be
\frac {\eps^{k-1} {t'}^{k-2}}{(k-2)!} N^{- s + (\frac \ta 2 + \be) (k-1) -\be} 
\\
&\hspace*{50pt}
\times
\big( \ind_{[\frac 1{N^{\ta}}, \frac 2{N^{\ta}}]}^{\ast(k-2)} 
\ast \ind_{[N, N+\frac 1{N^{\ta}}]} \big) (\eta) 
d\eta dt'.
\end{aligned}
\]
It follows from
\eqref{Ftukaa}, \eqref{defuk11}, and the induction hypothesis
that
\begin{equation}
\begin{aligned}
|\I(t,\xi) - \I_1 (t,\xi)|
&\les
\int_0^t \int_\R
\eps N^{\frac {\ta}2} \ind_{[\frac 1{N^{\ta}}, \frac 2{N^{\ta}}]}(\xi-\eta)
\\
&\hspace*{50pt}
\times
\eta^\be
\max \big( N^{-(\ta + 1)}, N^{-(\ta + 1) \be}, t' \big)
\\
&\hspace*{50pt}
\times
\eps^{k-1} t'^{k-2} 
N^{- s + (\frac \ta 2 + \be) (k - 1) -\be}
\\
&\hspace*{50pt}
\times
\big( \ind_{[\frac 1{N^{\ta}}, \frac 2{N^{\ta}}]}^{\ast(k-2)} 
\ast \ind_{[N, N+\frac 1{N^{\ta}}]}\big) (\eta)
d\eta dt'
\\
&\les
\max \big( N^{-(\ta + 1)}, N^{-(\ta + 1) \be}, t \big)
\eps^{k} t^{k-1}
\\
&\quad
\times
N^{-s+(\frac \ta2+\be)k -\be}
\big( \ind_{[\frac 1{N^{\ta}}, \frac 2{N^{\ta}}]}^{\ast(k-1)} 
\ast \ind_{[N, N+\frac 1{N^{\ta}}]} \big) (\xi).
\end{aligned}
\label{Idec1}
\end{equation}
We decompose $\I_1$ into two parts:
\begin{equation}
\begin{aligned}
\I_1 (t,\xi)
&=
e^{-it\xi^\al}
\int_0^t \int_\R
\eps N^{\frac {\ta}2} \ind_{[\frac 1{N^{\ta}}, \frac 2{N^{\ta}}]}(\xi-\eta)
\\
&\hspace*{30pt}
\times
\eta^\be
\frac {\eps^{k-1} {t'}^{k-2}}{(k-2)!} N^{- s + (\frac \ta 2 + \be) (k-1) -\be} 
\\
&\hspace*{30pt}
\times
\big( \ind_{[\frac 1{N^{\ta}}, \frac 2{N^{\ta}}]}^{\ast(k-2)} 
\ast \ind_{[N, N+\frac 1{N^{\ta}}]} \big) (\eta) 
d\eta dt'
\\
&\quad
+
e^{-it\xi^\al}
\int_0^t \int_\R
\big( e^{it'( \xi^\al -(\xi-\eta)^\al - \eta^\al)} -1 \big)
\\
&\hspace*{30pt}
\times
\eps N^{\frac {\ta}2} \ind_{[\frac 1{N^{\ta}}, \frac 2{N^{\ta}}]}(\xi-\eta)
\eta^\be
\frac {\eps^{k-1} {t'}^{k-2}}{(k-2)!} N^{- s + (\frac \ta 2 + \be) (k-1) -\be} 
\\
&\hspace*{30pt}
\times
\big( \ind_{[\frac 1{N^{\ta}}, \frac 2{N^{\ta}}]}^{\ast(k-2)} 
\ast \ind_{[N, N+\frac 1{N^{\ta}}]} \big) (\eta) 
d\eta dt'
\\
&=:
\I_{1,1}(t,\xi)
+
\I_{1,2}(t,\xi).
\end{aligned}
\label{I1dec1}
\end{equation}
It follows from
the fundamental theorem of calculus
that
\[
|\eta^\be - N^{\be}| 
= \be (\eta -N)
\int_0^1 (N + \tau (\eta-N))^{\be-1} d\tau
\les 
N^{- \ta + \be - 1}
\]
for $ \eta \in [N + \frac{k-1}{N^{\ta}}, N + \frac {2(k-1)}{N^{\ta}}]$.
With \eqref{I1dec1}, we have
\begin{equation}
\begin{aligned}
&\bigg|
\I_{1,1}(t,\xi)
-
e^{-it\xi^\al}
\frac {\eps^{k} {t}^{k-1}}{(k-1)!} N^{- s + (\frac \ta 2 + \be) k - \be} 
\big( \ind_{[\frac 1{N^{\ta}}, \frac 2{N^{\ta}}]}^{\ast(k-1)} 
\ast \ind_{[N, N+\frac 1{N^{\ta}}]} \big) (\xi)
\bigg|
\\
&\les
N^{-(\ta + 1)}
\cdot
\eps^k t^{k-1}
N^{- s + (\frac \ta 2 + \be) k - \be} 
\big( \ind_{[\frac 1{N^{\ta}}, \frac 2{N^{\ta}}]}^{\ast(k-1)} 
\ast \ind_{[N, N+\frac 1{N^{\ta}}]} \big) (\xi)
.
\end{aligned}
\label{I11aa}
\end{equation}
Moreover,
the fundamental theorem of calculus with \eqref{d:ta1} and $\al>0$ yields that
\begin{equation*}
\begin{aligned}
|\xi^\al -(\xi-\eta)^\al - \eta^\al |
&\le
(\xi-\eta)^\al + (\xi-\eta) \al \int_0^1 (\eta + \tau (\xi -\eta))^{\al-1} d\tau
\\
&\les
N^{-\al \ta}
+ \frac 1{N^{\ta}} N^{\al-1}
\les 1
\end{aligned}
\end{equation*}
for $\eta \in [N + \frac {k-1}{N^{\ta}}, N + \frac {2(k-1)}{N^{\ta}}]$
and $\xi -\eta \in [ \frac 1{N^\ta}, \frac 2{N^\ta} ]$
.
From \eqref{I1dec1}, we have
\begin{equation}
\begin{aligned}
|\I_{1,2}(t,\xi)|
&\les
\int_0^t
t'
\eps N^{\frac \ta2}
N^\be \eps^{k-1} t'^{k-2}
N^{- s + (\frac \ta 2 + \be) (k-1) - \be}
dt'
\\
&\hspace*{30pt}
\times
\big( \ind_{[\frac 1{N^{\ta}}, \frac 2{N^{\ta}}]}^{\ast(k-1)} 
\ast \ind_{[N, N+\frac 1{N^{\ta}}]} \big) (\xi)
\\
&\les
(\eps t)^{k}
N^{- s + (\frac \ta 2 + \be) k - \be}
\big( \ind_{[\frac 1{N^{\ta}}, \frac 2{N^{\ta}}]}^{\ast(k-1)} 
\ast \ind_{[N, N+\frac 1{N^{\ta}}]} \big) (\xi)
\end{aligned}
\label{I11-3}
\end{equation}
for $0<t \ll 1$.
From \eqref{Idec1}--\eqref{I11-3},
we obtain
\eqref{ordukaa}
for $0< t \ll 1$.

Second, we show that
\begin{equation}
|\II(t,\xi)|
\les
N^{-(\ta+1)\be}
\cdot
\eps^k t^{k-1}
N^{-s+(\frac \ta2+\be)k -\be}
\big( \ind_{[\frac 1{N^{\ta}}, \frac 2{N^{\ta}}]}^{\ast(k-1)} 
\ast \ind_{[N, N+\frac 1{N^{\ta}}]} \big) (\xi)
\label{II1}
\end{equation}
for $0< t \ll 1$.
With \eqref{Ftukaa},
Proposition \ref{prop:u1k} and the induction hypothesis yield that
\begin{align*}
&|\II(t,\xi)|
\\
&\les
\sum_{l=2}^{k-1}
\int_0^t
\int_\R
\eps ( \pi^2 2^\be \eps t')^{l-1}
((l-1)!)^{\max (0,\be-1)}
N^{\ta \be + \ta (-\be+\frac 12)l}
\ind_{[\frac 1{N^{\ta}}, \frac {2}{N^{\ta}}]}^{\ast (l)} (\xi-\eta)
\\
&\quad
\times
\eta^\be
\eps^{k-l} t'^{k-l-1}
N^{- s+ (\frac \ta2 + \be) (k-l)- \be}
\big( \ind_{[\frac 1{N^{\ta}}, \frac 2{N^{\ta}}]}^{\ast(k-l-1)} 
\ast \ind_{[N, N+\frac 1{N^{\ta}}]} \big) (\eta)
d\eta dt'
\\
&\les
N^{-(\ta+1)\be}
\cdot
\eps^k t^{k-1}
N^{- s+ (\frac \ta2 + \be) k -\be}
\big( \ind_{[\frac 1{N^{\ta}}, \frac 2{N^{\ta}}]}^{\ast(k-1)} 
\ast \ind_{[N, N+\frac 1{N^{\ta}}]} \big) (\xi).
\end{align*}
Hence,
we have
\eqref{II1} for $0<t \ll 1$.

Finally, we prove that
\begin{equation}
|\III(t,\xi)| 
\les
N^{-(\ta+1)\be}
\cdot
\eps^k t^{k-1}
N^{-s+(\frac \ta2+\be)k -\be}
\big( \ind_{[\frac 1{N^{\ta}}, \frac 2{N^{\ta}}]}^{\ast(k-1)} 
\ast \ind_{[N, N+\frac 1{N^{\ta}}]} \big) (\xi)
\label{III1}
\end{equation}
for $0 <t \ll 1$.
Proposition \ref{prop:u1k} and the induction hypothesis imply that
\begin{equation*}
\begin{aligned}
|\III(t,\xi)| 
&\les
\sum_{l=1}^{k-1} 
\int_0^t
\int_\R
\eps^{l} t'^{l-1}
N^{- s+ (\frac \ta2 + \be) l- \be}
\big( \ind_{[\frac 1{N^{\ta}}, \frac 2{N^{\ta}}]}^{\ast(l-1)} 
\ast \ind_{[N, N+\frac 1{N^{\ta}}]} \big) (\xi-\eta)
\\
&\hspace*{40pt}
\times
\eta^\be
\eps ( \pi^2 2^\be \eps t')^{k-l-1}
((k-l-1)!)^{\max (0,\be-1)}
\\
&\hspace*{60pt}
\times
N^{\ta \be + \ta (-\be+\frac 12)(k-l)}
\ind_{[\frac 1{N^{\ta}}, \frac {2}{N^{\ta}}]}^{\ast (k-l)}
(\eta) d \eta dt'
\\
&\les
N^{-(\ta+1)\be}
\cdot
\eps^k t^{k-1}
N^{-s+(\frac \ta2+\be)k -\be} 
\big( \ind_{[\frac 1{N^{\ta}}, \frac 2{N^{\ta}}]}^{\ast(k-1)} 
\ast \ind_{[N, N+\frac 1{N^{\ta}}]} \big) (\xi).
\end{aligned}
\end{equation*}
Therefore, we obtain \eqref{III1} for $0 <t \ll 1$.

From \eqref{Ftukaa}, \eqref{ordukaa}, \eqref{II1}, and \eqref{III1},
we obtain \eqref{orduk}.
\end{proof}

Using Propositions \ref{prop:u1k} and \ref{prop:Ftuk}, we prove Theorem \ref{thm:ill}.

\begin{proof}[Proof of Theorem \ref{thm:ill}]
We define $k \in \N$ as
\begin{equation}
k :=
\Ceil{
\frac{|\s-s|+1}{\be-\frac \ta 2}}+1.
\label{ch:k}
\end{equation}
It follows from \eqref{d:ta1} that $k$ is well-defined and $k \ge 2$.
Set
\begin{equation}
T:= \frac 1{\log N}.
\label{time}
\end{equation}
From \eqref{inter},
we have
\begin{equation}
\begin{aligned}
\| u(T) \|_{H^\s}
&\ge
\big\| \jb{\cdot}^\s \ind_{[N + \frac {k-1}{N^{\ta}}, N + \frac k{N^{\ta}})} \ft u(T) \big\|_{L^2}
\\
&\ge
\Big\| \sum_{l = \floor{\frac k2}}^k \jb{\cdot}^\s \ind_{[N + \frac {k-1}{N^{\ta}}, N + \frac k{N^{\ta}})} \Ft [u^{(l)}] (T) \Big\|_{L^2}
\\
&\quad
-
\sum_{l = \floor{\frac{k-1+N^{\ta+1}}2}}^{\ceil{k+N^{\ta + 1}}}
\big\| \jb{\cdot}^\s  \ind_{[N + \frac {k-1}{N^{\ta}}, N + \frac k{N^{\ta}})} \Ft[u_1^{(l)}](T) \big\|_{L^2}.
\end{aligned}
\label{uTbda}
\end{equation}
It follows from
Proposition \ref{prop:Ftuk} with \eqref{time}
that
\begin{align*}
&\Big| \sum_{l = \floor{\frac k2}}^k \Ft[u^{(l)}](T,\xi) \Big|
\\
&\ge
\frac 12
\sum_{l = \floor{\frac k2}}^k
\frac{\eps^l T^{l-1}}{(l-1)!} N^{-s+ (\frac \ta 2+\be)l-\be}
\big( \ind_{[\frac 1{N^{\ta}}, \frac 2{N^{\ta}}]}^{\ast(l-1)}
\ast \ind_{[N, N+\frac 1{N^{\ta}}]} \big) (\xi)
\\
&\ge
\frac 12
\frac{\eps^k T^{k-1}}{(k-1)!} N^{-s+ (\frac \ta 2+\be)k-\be}
\big( \ind_{[\frac 1{N^{\ta}}, \frac 2{N^{\ta}}]}^{\ast(k-1)}
\ast \ind_{[N, N+\frac 1{N^{\ta}}]} \big) (\xi)
\end{align*}
for $N \gg 1$ and $\xi \in [ N + \frac{k-1}{N^{\ta}}, N + \frac {k}{N^{\ta}})$.
Hence, we obtain that
\begin{equation}
\begin{aligned}
&\Big\| \sum_{l = \floor{\frac k2}}^k \jb{\cdot}^\s \ind_{[N + \frac {k-1}{N^{\ta}}, N + \frac k{N^{\ta}})} \Ft [u^{(l)}] (T) \Big\|_{L^2}
\\
&\ges
\eps^k T^{k-1}
N^{-s+ (\frac \ta 2+\be)k-\be}
N^{\s - (k-1) \ta - \frac \ta 2}
\\
&=
\eps^k (\log N)^{-(k-1)}
N^{\s-s+ (-\frac \ta 2+\be) (k-1)}
. 
\end{aligned}
\label{ukHs}
\end{equation}
Moreover,
from Proposition \ref{prop:u1k}, Stirling's approximation, \eqref{time}, and \eqref{d:ta1},
we have
\begin{equation}
\begin{aligned}
&
\sum_{l = \floor{\frac{k-1+N^{\ta+1}}2}}^{\ceil{k+N^{\ta + 1}}}
\big\| \jb{\cdot}^\s  \ind_{[N + \frac {k-1}{N^{\ta}}, N + \frac k{N^{\ta}})} \Ft[u_1^{(l)}](T) \big\|_{L^2}
\\
&\les
\sum_{l = \floor{\frac{k-1+N^{\ta+1}}2}}^{\ceil{k+N^{\ta + 1}}}
\eps
(\pi^2 2^\be \eps T )^{l-1}
\big( (l-1)^{l-\frac 12} e^{-l+2} \big)^{\max (0, \be-1)}
\\
&\hspace*{70pt}
\times
N^{\ta \be + \ta ( -\be + \frac 12) l}
N^{-(l-1)\ta}
N^{\s - \frac \ta 2}
\\
&\les
\eps
\end{aligned}
\label{ukHs2}
\end{equation}
for $N \gg 1$.

It follows from \eqref{ch:k} that, for any $0<\eps \ll 1$,
there exists $N \in \N$ with $N \gg 1$ and
\begin{align*}
\frac 1{\log N} < \frac \eps 2,
&
&(\log N)^{-(k-1)} N^{\s - s + (- \frac \ta 2 + \be) (k-1)} \ges \eps^{-k-1}.
\end{align*}
Then, from \eqref{uTbda}--\eqref{ukHs2},
we have
\[
\| u(T) \|_{H^\s}
\ges
\eps^k (\log N)^{-(k-1)}
N^{\s-s+ (-\frac \ta 2+\be)(k-1)}
- \eps
\ge
\eps^{-1}.
\]
With \eqref{phiHs} and \eqref{time}, we obtain Theorem \ref{thm:ill}.
\end{proof}

\section{Ill-posedness on the torus}
\label{sec:illT}

In this section, we prove Theorem \ref{thm:illT}.
We begin by establishing the torus analogue of Lemma \ref{lem:supp}.

\begin{lemma}
\label{lem:suppT}
Let $s \in \R$ and define
\begin{equation}
\wt \nu_0 (t)
:=
\bigg(
\jb{\floor{t}+1}^{2|s|} (t-\floor{t})
+
\sum_{n=1}^{\floor{t}} \jb{n}^{2|s|}
\bigg)^{\frac 12}
\label{nu00}
\end{equation}
for $t \ge 0$,
where $\sum_{n=1}^{\floor{t}} \jb{n}^{2|s|} :=0$ if $t \in [0,1)$.
Suppose that $F \in H^s(\T)$ satisfies $\supp \ft F \subset \N$. 
Then, we have $\ft F \in \X^{0,\wt \nu_0}$.
In particular, $F \in \Ft^{-1} \X$. 
\end{lemma}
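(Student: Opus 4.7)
The plan is to mirror the proof of Lemma \ref{lem:supp}, replacing the integral of $\jb{\xi}^{-2s}$ on the real line by a weighted $\ell^2$-sum over the positive integers. Since $F \in H^s(\T)$ is a periodic distribution and $\supp \ft F \subset \N$, the distribution $\ft F$ on $\R$ is a sum of Dirac masses concentrated at positive integers, and $\|F\|_{H^s}^2$ is comparable to the weighted $\ell^2(\N)$-norm of its coefficients with weight $\jb{n}^{2s}$. For $l > 0$ and $\psi \in \D_{<l}$, only the integers $n < l$ contribute to $\dual{\ft F}{\psi}$.

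I would then apply Cauchy-Schwarz after inserting $\jb{n}^{-s} \cdot \jb{n}^s$ and using $|\psi(n)| \le \|\psi\|_0^+$:
\[
|\dual{\ft F}{\psi}| \les \|\psi\|_0^+ \cdot \bigg( \sum_{1 \le n < l} \jb{n}^{-2s} \bigg)^{1/2} \|F\|_{H^s}.
\]
Next I would bound the remaining weighted counting sum by $\wt \nu_0(l)^2$. Since $-2s \le 2|s|$ for every $s \in \R$, we have $\jb{n}^{-2s} \le \jb{n}^{2|s|}$, and then
\[
\sum_{1 \le n < l} \jb{n}^{2|s|} \le \sum_{n=1}^{\floor{l}} \jb{n}^{2|s|} \le \wt \nu_0(l)^2,
\]
the last inequality being immediate from \eqref{nu00}: the piecewise-linear correction $\jb{\floor{l} + 1}^{2|s|}(l - \floor{l})$ is non-negative and interpolates between successive integer values. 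Combined with \eqref{norm} this gives $\rho_l^{0, \wt \nu_0}(\ft F) \les \|F\|_{H^s}$ for every $l > 0$, and hence $\ft F \in \X^{0, \wt \nu_0}$.

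It remains to deduce $F \in \Ft^{-1} \X$ by verifying $\wt \nu_0 \in C_0^\up$. Monotonicity is transparent, and continuity at every integer $l = k \in \N$ follows because $\jb{\floor{l} + 1}^{2|s|}(l - \floor{l}) \to \jb{k}^{2|s|}$ as $l \to k^-$, which exactly matches the new summand $\jb{k}^{2|s|}$ appearing at $l = k$. Moreover $\wt \nu_0(0) = 0$ and $\wt \nu_0(l)^2 = \jb{1}^{2|s|} l$ on $[0, 1]$, so choosing $\delta \in (0, 1]$ small enough that $\wt \nu_0(2\delta) \le \frac 12$ places $\wt \nu_0$ in $C_\delta^\up \subset C_0^\up$. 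I expect the main care point to be this book-keeping around the continuous interpolation and the $C_0^\up$ membership; the Cauchy-Schwarz estimate itself is routine once $\ft F$ has been represented as a discrete measure on $\N$.
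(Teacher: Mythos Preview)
Your proposal is correct and follows essentially the same route as the paper's proof: both represent $\ft F$ as a discrete measure on $\N$, apply Cauchy--Schwarz (the paper calls it H\"older) with the weight $\jb{n}^{\pm s}$, and bound the resulting sum $\sum_{1\le n<l}\jb{n}^{-2s}\le\sum_{n=1}^{\floor{l}}\jb{n}^{2|s|}\le\wt\nu_0(l)^2$. The only difference is cosmetic: the paper asserts $\wt\nu_0\in C_0^\up$ in one line, whereas you spell out the continuity at integers and the choice of $\dl$ explicitly.
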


\begin{proof}
Note that $\wt \nu_0$ defined in \eqref{nu00} belongs to $C_0^\up$.
Let $l > 0$ and $\psi \in \D_{< l}(\R)$.
From $F \in H^s(\T)$, we have $\ft F \in \l^2$.
H\"older's inequality, $\supp \ft F \subset \N$, and \eqref{nu00} imply that
\begin{align*}
|\dual {\ft F}\psi |
&\le
\sum_{n=1}^\infty
|\ft F(n)| |\psi(n)|
\\
&\le
\Big( \sum_{n=1}^{\floor{l}} \jb{n}^{-s} \jb{n}^s |\ft F(n)| \Big) \|\psi\|_0^+ 
\\
&\le
\Big( \sum_{n=1}^{\floor{l}} \jb{n}^{2|s|} \Big)^{\frac 12}
\|F\|_{H^s} \|\psi\|_0^+
\\
&\le
\wt \nu(l)
\|F\|_{H^s} \|\psi\|_0^+. 
\end{align*}
With \eqref{norm}, we obtain that
\[
\rho_l^{0, \wt \nu_0}(\ft F) 
\le \|F\|_{H^s}
\]
for $l >0$.
Hence, we have
\[
\rho^{0, \wt \nu_0}(\ft F)
= 
\sup_{l > 0} \rho_l^{0, \wt \nu_0}(\ft F) 
\le 
\|F \|_{H^s} 
< \infty.
\]
Then, $\ft F \in \X^{0, \wt \nu_0}$.
\end{proof}

\begin{remark}
\rm
We can choose $\wt \nu_0$ as
\[
\wt \nu_0 (t)
=
\bigg(
\jb{\floor{t}+1}^{-2s} (t-\floor{t})
+
\sum_{n=1}^{\floor{t}} \jb{n}^{-2s}
\bigg)^{\frac 12},
\]
that is,
we can replace ``$|s|$'' in \eqref{nu00} with ``$-s$''.
In this case,
we have
$\sup_{t \ge 0} \wt \nu_0 (t) \le 1$ for large $s$.
However,
this would require us to replace $\wt \nu_0$ in the proof of Theorem \ref{thm:wel-T}.
For this reason,
we choose $\wt \nu_0$ as in \eqref{nu00}.
\end{remark}

\begin{remark}
\label{rem:Xn1}
\rm
In the assumptions of Lemma \ref{lem:suppT},
we can not replace 
\[
\supp \ft F \subset \N
\]
with
\[
\supp \ft F \subset \N_0.
\]
More precisely,
if $F \in C^\infty (\T)$ with
$\supp \ft F \subset \N_0$ and $\ft F(0) \neq 0$,
we have
\[
F \notin \Ft^{-1} \X.
\]
Indeed,
we have
\[
| \dual{\ft F}{X^l 1} | 
= |\ft F(0)| |(X^l1)(0)|
= |\ft F(0)| \neq 0
\]
for $l \in (0, \frac 12)$,
where $X^l$ is defined in \eqref{X^_l}.
Moreover,
when $\mu \in C_0^\up$, we have
\[
| \dual{\ft F}{X^l 1} | 
= |\ft F(0)| \| X^l 1 \|_{\mu (l)}^+
\]
for $0<l \ll 1$.
It follows from \eqref{norm} that
\[
\rho^{\mu,\nu}(\ft F)
\ge
\rho_l^{\mu, \nu} (\ft F)
\ge \nu(l)^{-1} |\ft F(0)|
\]
for $\mu, \nu \in C_0^\up$ and $0< l \ll 1$.
Since $\nu \in C_0^\up$, in particular, $\nu(0) =0$ and $\nu$ is continuous,
we get $\rho^{\mu,\nu}(\ft F)= \infty$.
We thus obtain $F \notin \Ft^{-1} \X$.
\end{remark}

For the Cauchy problem \eqref{dNLS+}, we obtain the following theorem.
Although this essentially repeats the argument of \cite{NaWa25},
we include a proof for the sake of completeness.

\begin{theorem}
\label{thm:wel-T}
Let $\al, \be > 0$.
Let $\wt \nu_0$ be as in \eqref{nu00}.
For any $b_0>0$,
there exists
\[
\wt \nu \in C([0,\infty); C_0^\up)
\]
such that the followings hold:

\begin{enumerate}
\item
$\wt \nu (t,l) \ge \wt \nu_0(l)$ for all $t,l \ge 0$.

\item
For any $\wt \phi \in \Ft^{-1} \X^{0, \wt \nu_0}$
with $\rho^{0, \wt \nu_0}(\Ft [\wt \phi]) \le b_0$,
there exists a unique solution $v \in C([0,1]; \Ft^{-1} \X)$ to \eqref{dNLS+}
satisfying
\[
\sup_{0 \le t \le 1}
\rho^{0, \wt \nu (t)}( \ft v(t) ) \le 2 b_0.
\]
\end{enumerate}
\end{theorem}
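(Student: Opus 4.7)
The plan is to adapt the contraction argument in the proof of Theorem \ref{thm:wel} to the modified equation \eqref{dNLS+}. The new feature is the constant-coefficient linear term $\ft\phi(0) D^\be v$, which I would absorb into the free propagator
\[
S(t) := e^{-itD^\al + t\ft\phi(0)D^\be},
\]
so that \eqref{dNLS+} becomes the integral equation
\[
\ft v(t,\xi) = e^{-it|\xi|^\al + t\ft\phi(0)|\xi|^\be}\Ft[\wt\phi](\xi) + \int_0^t e^{-i(t-t')|\xi|^\al + (t-t')\ft\phi(0)|\xi|^\be}\Ft[vD^\be v](t',\xi)\,dt'.
\]
To accommodate the amplification of Fourier modes produced by $S(t)$, the weight $\wt\nu$ must depend on $t$ and carry an exponential in $l$.

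The key ingredients are a modified linear estimate and a suitable choice of $\wt\nu$. For the first, multiplication by $e^{-it|\cdot|^\al + t\ft\phi(0)|\cdot|^\be}$ on the Fourier side has modulus $e^{t\Re\ft\phi(0)|\xi|^\be}\le e^{t|\Re\ft\phi(0)|l^\be}$ on $[0,l)$. Since every distribution under consideration is Fourier-supported in $\N$ (Lemma \ref{lem:suppT}), only values on $[0,\infty)$ matter, and the same duality argument as in Lemma \ref{lem:bd1} yields
\[
\rho_l^0(S(t)F) \le e^{t|\Re\ft\phi(0)|l^\be}\rho_l^0(F).
\]
For the second, set $C := |\Re\ft\phi(0)|$, fix $\eps>0$ small, and define
\[
\nu_\ast(l) := \eps^{-1} e^{Cl^\be}\wt\nu_0(l), \quad \kappa(l) := \frac{1}{1+l^\be}, \quad \nu_1 := \CC(\nu_\ast,\kappa), \quad \wt\nu(t,l) := e^{Ctl^\be}\cdot\eps\kappa(l)\nu_1(l).
\]
Lemma \ref{lem:convo}(i) applied to $(\nu_\ast,\kappa)$ gives $\eps\kappa\nu_1 \ge \eps\nu_\ast = e^{Cl^\be}\wt\nu_0$, hence $\wt\nu(t,l) \ge \wt\nu_0(l)$, establishing (i). Continuity in $t$ and the $C_0^\up$-property of each slice $\wt\nu(t,\cdot)$ follow from the definitions after possibly shrinking the $\dl$-threshold in $C_\dl^\up$.

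The contraction is then performed on
\[
\wt Z := \Big\{\ft v \in C([0,1];\X) : \sup_{t \in [0,1]}\rho^{0,\wt\nu(t,\cdot)}(\ft v(t)) \le 2b_0\Big\},
\]
mirroring the real-line argument: the modified linear estimate combined with $\rho^{0,\wt\nu_0}(\Ft[\wt\phi])\le b_0$ handles the inhomogeneous term, while Lemma \ref{lem:convo}(ii), together with the transitions analogous to \eqref{udual} and \eqref{xiudua} adapted to the time-dependent weight, controls the nonlinear Duhamel integrand; the smallness of $\eps$ absorbs the remaining prefactors and supplies the contraction constant.

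The principal technical obstacle is closing the nonlinear estimate despite the exponential growth carried by $\wt\nu$. Transitioning from $\rho^{0,\wt\nu(t',\cdot)}$ to $\rho^{0,\nu_1}$ introduces a factor $\eps e^{Ct'l^\be}\kappa(l)$; squaring this in the convolution inequality \eqref{convo0} and multiplying by the propagator amplification $e^{(t-t')Cl^\be}$ produces $e^{(t+t')Cl^\be}\kappa(l)^2$, which after the $t'$-integration naively exceeds the single exponential available in $\wt\nu(t,l) = e^{Ctl^\be}\eps\kappa(l)\nu_1(l)$. Resolving this requires either enlarging $\nu_\ast$ via Lemma \ref{lem:convo}(iii) with an additional growth factor $b(l) = e^{Cl^\be}$, so that $\nu_1$ itself already encodes the required exponential room, or constructing the solution iteratively on a sequence of short subintervals whose lengths shrink with the frequency scale and bootstrapping to $[0,1]$. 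Either route reproduces, in the present setting, the construction carried out in \cite{NaWa25} for non-autonomous evolutions; its translation is notation-heavy but introduces no new ideas.
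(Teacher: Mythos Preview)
Your overall strategy matches the paper's: absorb the linear term $\ft\phi(0)D^\be v$ into the propagator $S(t)$, introduce a time-dependent weight, and rerun the contraction from Theorem \ref{thm:wel}. You also correctly locate the central difficulty---the extra exponential from $S(t-t')$ compounding with the one already present in $\wt\nu(t',\cdot)$.

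The gap is in the resolution. Your fix (1), applying Lemma \ref{lem:convo}(iii) with $b(l)=e^{Cl^\be}$, violates the hypothesis of that lemma: it requires $b(l)=b(0)$ on the set $\{l:\nu_\ast(l)\le\kappa(l)\}$, and since $\wt\nu_0(0)=0$ this set contains a neighbourhood of $0$ on which $e^{Cl^\be}$ is not constant. The paper handles this with two specific choices you did not anticipate. First, the time dependence is placed \emph{inside} the $\CC$-construction: one sets $\wt\nu_\ast(t,l)=\eps^{-1}\wt\nu_0(l)e^{tL(l)}$, then $\wt\nu_1(t)=\CC(\wt\nu_\ast(t),\kappa)$ and $\wt\nu(t)=\eps\kappa\,\wt\nu_1(t)$, rather than fixing a single time-independent $\nu_1$ and multiplying by $e^{Ctl^\be}$ afterwards. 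Second---and this is the point---$Cl^\be$ is replaced by $L(l)=\jb{\Re\ft\phi(0)}\max(l_\ast,l)^\be$ with $l_\ast=\sup\wt\nu_0^{-1}([0,1])$, which is \emph{constant} on $[0,l_\ast]$. The required inequality $e^{(t-t')L(l)}\wt\nu(t',l)\le\wt\nu(t,l)$ then follows from Lemma \ref{lem:convo}(iii) with $b(l)=e^{(t-t')L(l)}$: the constancy condition on $b$ holds because $\wt\nu_\ast(t',l)\le\kappa(l)$ forces $\wt\nu_0(l)<1$, hence $l<l_\ast$, where $L$ and therefore $b$ are constant. This single inequality converts your diverging factor $e^{(t+t')Cl^\be}$ into the tautological bound $\wt\nu(t,l)$, and the contraction closes on $[0,1]$ directly, with no subinterval iteration needed.
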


\begin{proof}
We solve the corresponding integral equation to \eqref{dNLS+} as the fixed point of
the operator
\begin{equation}
\begin{aligned}
\wt \G(\Ft [\wt \phi], \ft v) (t,n)
&:= e^{-it |n|^\al + t \ft \phi(0) |n|^\be} \Ft [\wt \phi ] (n)
\\
&\qquad
+ \int_0^t e^{-i(t-t') |n|^\al + (t-t')\ft \phi (0) |n|^\be} \Ft[v D^\be v](t', n) dt'.
\end{aligned}
\label{int-T}
\end{equation}

Set
\begin{equation}
l_\ast := \sup \wt \nu_0^{-1}([0,1]).
\label{eq:last}
\end{equation}
Define
\begin{equation}
L(l)
:=
\jb{\Re \ft \phi(0)}
\max (l_\ast, l)^\be
\label{eq:Ll1}
\end{equation}
for $l \ge 0$.
For any $t \in [0,1]$, $l > 0$, and $f \in L^\infty((-\infty,l))$, we have
\begin{equation}
\| e^{-it |\cdot|^\al + t \ft \phi(0) |\cdot|^\be} f \|_{0}^+
\le
e^{ t |\Re \ft \phi(0)| l^\be } \|f \|_0^+
\le
e^{ t L(l)} \|f \|_0^+.
\label{LNhat-T}
\end{equation}

We also fix $\eps > 0$ small enough such that
\begin{equation}
\eps < \frac 1{8 b_0}.
\label{esp2}
\end{equation}
Define $\nu_\ast, \nu_1, \nu : [0, \infty) \to C_0^\up$ and $\kappa \in C^\down_{(0,1]}$ as 
\begin{equation}
\begin{aligned}
&\wt \nu_\ast (t,l) := \eps^{-1} \wt \nu_0 (l)
e^{t L(l)},
&&
\kappa (l) := \frac 1{1 + l^\be},
\\
&\wt \nu_1 (t,l) := \CC (\wt \nu_\ast (t), \kappa) (l),
&&
\wt \nu(t,l) := \eps \kappa (l) \wt \nu_1(t,l), 
\end{aligned}
\label{nus}
\end{equation}
for $t, l \ge 0$,
where $\wt \nu_\ast (t) := \wt \nu_\ast (t, \cdot)$ and $\CC$ is defined in Lemma \ref{lem:convo}. 

Let
$\wt \phi \in \Ft^{-1} \X^{0, \wt \nu_0}$
satisfy $\rho^{0, \wt \nu_0}(\Ft [\wt \phi]) \le b_0$.
We will prove that
$\wt \G(\Ft [\wt \phi], \ft v)$ is a contraction mapping on
\begin{equation}
\wt Z := \Big\{ \ft v \in C ([0,1]; \X) \mid \sup_{0 \le t \le 1} \rho^{0, \wt \nu (t)} (\ft v(t)) \le 2 b_0  \Big\}.
\label{sp-T}
\end{equation}
For any $\ft v \in \wt Z$, and $\psi \in \D_{<l}$,
it follows from \eqref{int-T}, \eqref{norm}, \eqref{LNhat-T}, and Lemma \ref{lem:bd1} that
\begin{equation*}
\begin{aligned}
&\Re \dual{\wt \G(\Ft [\wt \phi], \ft v) (t)}{\psi} 
\\
&\le
| \dual{e^{-it|\cdot|^\al + t \ft \phi (0) |\cdot|^\be} \Ft [\wt \phi]}{\psi} |
\\
&\quad
+
\Big|\Dual {\int_0^t e^{-i(t-t')|\cdot|^\al + (t-t') \ft \phi (0) |\cdot|^\be} \Ft[v D^\be v](t') dt'}{\psi}\Big| 
\\
&\le
\wt \nu_0 (l) e^{t L(l)} \rho_l^{0,\wt \nu_0} (\Ft [\wt \phi]) \| \psi \|_0^+
\\
&\quad
+
\int_0^t
\kappa (l) \wt \nu_1 (t',l) e^{ (t-t') L(l)} \rho_l^{0,\kappa \wt \nu_1 (t')}(\Ft [v D^\be v](t'))
\| \psi \|_0^+
dt'
.
\end{aligned}
\end{equation*}

With \eqref{rholm1} and $\rho^{0, \wt \nu_0}(\Ft [\wt \phi]) \le b_0$,
we obtain
\begin{equation}
\begin{aligned}
&\rho_l^0 (\G(\Ft [\wt \phi], \ft u) (t))
\\
&\le
\wt \nu_0 (l) e^{t L(l)} b_0
+
\int_0^t
\kappa(l) \wt \nu_1(t',l)
e^{ (t-t') L(l)}
\rho_l^{0, \kappa \wt \nu_1 (t')}(\Ft [v D^\be v](t')) dt'.
\end{aligned}
\label{estDuht}
\end{equation}

The same calculation as in \eqref{estcon} with Lemma \ref{lem:convo} and \eqref{nus}
yields that
\begin{equation}
\rho_l^{0, \kappa \wt \nu_1(t)}(\Ft [v D^\be v](t))
\le
\eps^2 \big(\rho_l^{0, \wt \nu (t)} (\ft v(t)) \big)^2
\label{estcont}
\end{equation}
for $t,l>0$.
From \eqref{estDuht}, \eqref{estcont}, and \eqref{nus}, 
we have
\begin{equation}
\begin{aligned}
\rho_l^0 (\wt \G(\Ft [\wt \phi], \ft v) (t))
&\le
\wt \nu_0 (l) e^{t L(l)} b_0
\\
&\quad
+
\int_0^t
\kappa(l) \wt \nu_1(t',l) e^{(t-t') L(l)}
\eps^2
\big(\rho_l^{0, \wt \nu (t')} (\ft v(t')) \big)^2 dt'
\\
&\le
\wt \nu_0 (l) e^{t L(l)} b_0
\\
&\quad
+
\int_0^t
\wt \nu (t',l) e^{(t-t') L(l)}
\eps
\big(\rho_l^{0, \wt \nu (t')} (\ft v(t')) \big)^2 dt'
\end{aligned}
\label{estDuh1}
\end{equation}
for $\ft v \in \wt Z$.

Lemma \ref{lem:convo} (i) and \eqref{nus} yield that
\[
\wt \nu (t,l)
\ge \eps \kappa (l) \wt \nu_1 (t,l)
\ge \eps \wt \nu_\ast  (t,l)
= \wt \nu_0 (l) e^{t L(l)}.
\]
With \eqref{estDuh1} and $\ft v \in \wt Z$,
we have
\[
\rho^{0, \wt \nu (t)}(\wt \G(\Ft [\wt \phi], \ft v) (t))
\le
b_0
+
4\eps b_0^2
\sup_{l>0}
\int_0^t e^{(t-t') L(l)}
\frac{\wt \nu(t',l)}{\wt \nu (t,l)} dt'
\]
for $0<t<1$.
Here,
from \eqref{nus},
\[
\wt \nu_\ast (t',l) \le \kappa (l)
\]
is equivalent to
\[
\wt \nu_0(l) e^{t' L(l)}
\le \frac{\eps}{1+l^\be}.
\]
By \eqref{eq:last} and \eqref{eq:Ll1},
this inequality holds only for $l \in (0,l_\ast)$.
Then,
Lemma \ref{lem:convo} (iii) and \eqref{nus} implies that
\begin{align*}
e^{(t-t') L(l)}
\frac{\wt \nu(t',l)}{\wt \nu (t,l)}
&=
e^{(t-t') L(l)}
\frac{\CC (\wt \nu_\ast (t'), \kappa) (l)}{\CC (\wt \nu_\ast (t), \kappa) (l)}
\\
&\le
\frac{\CC (e^{(t-t') L} \wt \nu_\ast (t'), \kappa) (l)}{\CC (\wt \nu_\ast (t), \kappa) (l)}
=1
\end{align*}
for $0 \le t' \le t$.
Hence, we obtain that
\[
\sup_{0 \le t \le 1}
\rho^{0, \wt \nu (t)}(\wt \G(\Ft [\wt \phi], \ft v) (t))
\le
b_0
+
4\eps b_0^2.
\]
It follows from \eqref{esp2}
that
$\wt \G(\Ft [\wt \phi], \cdot)$ is a mapping on $\wt Z$.

A similar calculation yields that
$\wt \G(\Ft [\wt \phi], \cdot)$ is a contraction mapping on $\wt Z$.
Therefore, we have a unique solution $\ft v \in \wt Z$ by using fixed point argument.
\end{proof}

By using Theorem \ref{thm:wel-T}, we prove Theorem \ref{thm:illT}.
\begin{proof}[Proof of Theorem \ref{thm:illT}]

For $s \in \R$ and $N \in \N$ with $N \ge 3$, we take the initial data as
\begin{equation*}
\phi(x) := \frac {1 + \jb{N}^{-s} e^{iNx}}{\log N}.
\end{equation*}
Then, we have 
\begin{equation}
\| \phi \|_{H^s} \le \frac 2{\log N}.
\label{Hsini}
\end{equation}

Define
\begin{equation}
\wt \phi (x) :=
\phi(x) - \ft \phi (0)
= \frac{\jb{N}^{-s} e^{iNx}}{\log N}.
\label{eq:wtphi}
\end{equation}
Lemma \ref{lem:suppT} yields that
$\Ft [\wt \phi] \in \X^{0, \wt \nu_0}$,
where $\wt \nu_0$ is defined in \eqref{nu00}.
By Theorem \ref{thm:wel-T},
there exists a solution
$v \in C([0,1]; \Ft^{-1} \X)$ to \eqref{dNLS+}.
Moreover,
$v$ satisfies
\[
\ft v(t,n)
= 
\wt \G(\Ft [\wt \phi], \ft v) (t,n)
\]
in $\wt Z$ defined in \eqref{sp-T},
where
$\wt \G(\Ft [\wt \phi], \ft v)$ is defined in \eqref{int-T}.
Hence,
\begin{equation}
\begin{aligned}
&\dt \ft v(t,n)
+ i |n|^\al \ft v(t,n)
\\
&\quad
=
\ft \phi(0) |n|^\be \ft v(t,n)
+
\sum_{\substack{n_1,n_2 \in \N \\ n_1+n_2=n}} \ft v(t,n_1) |n_2|^\be \ft v(t,n_2)
\end{aligned}
\label{eq:vv1}
\end{equation}
holds for $t \in [0,1]$ and $n \in \N$.

We set
\begin{align*}
v^{(1)}(t)
&:= e^{-it D^\al + t \ft \phi (0) D^\be} \wt \phi,
\\
v^{(k)}(t)
&:= \sum_{\substack{k_1,k_2 \in \N \\ k_1+k_2=k}}
\int_0^t e^{-i (t-t') D^\al + (t-t') \ft \phi (0) D^\be}
(v^{(k_1)} D^\be v^{(k_2)}) (t') dt'
\end{align*}
for $k \ge 2$.
By
(the proof of) Theorem \ref{thm:wel-T},
we have
\[
v = \sum_{k=1}^\infty v^{(k)}
\]
in $\Ft^{-1} \wt Z$,
where $\wt Z$ is defined in \eqref{sp-T}.
It follows from \eqref{eq:wtphi} that
\[
\supp \Ft [v^{(k)}] (t, \cdot) \subset \{kN\}
\]
for $t \in [0,1]$ and $k \in \N$.
Hence, we have
\begin{equation}
|\ft v (t,N) | 
= |\ft v^{(1)} (t,N) | 
= e^{t (\Re \ft \phi(0)) N^\be} \frac {\jb{N}^{-s}}{\log N}.
\label{ftvN}
\end{equation}

Set
\[
u := v + \ft \phi(0).
\]
With \eqref{eq:vv1},
$u$ is a solution in the sense of Theorem \ref{thm:illT} (ii).
Let $\s \in \R$.
We set
\begin{equation}
T:= (|\s-s| + 1) \frac{(\log N)^2}{N^{\be}}.
\label{time:T}
\end{equation}
From \eqref{ftvN} and \eqref{time:T}, we obtain
\begin{equation}
\begin{aligned}
\| u(T) \|_{H^\s} 
& 
\ge
\jb{N}^\s | \ft v(T, N) | 
=
\jb{N}^\s e^{T (\Re \ft \phi(0)) N^\be} \frac {\jb{N}^{-s}}{\log N}
\\
&=
\frac{\jb{N}^{\s-s}}{\log N} N^{|\s-s|+1}
\ges
\frac {N}{\log N}.
\end{aligned}
\label{ulow}
\end{equation}

For any $\be > 0$ and $\eps > 0$, there exists $N \in \N$ with $N \gg 1$ and
\[
\frac 2 {\log N} < \eps, 
\quad (|\s-s| + 1) \frac{(\log N)^2}{N^{\be}} < \eps,
\quad \frac N{\log N} > \eps^{-1}.
\]
By \eqref{Hsini}, \eqref{time:T}, and \eqref{ulow}, we have Theorem \ref{thm:illT}.
\end{proof}

\appendix

\section{Well-posedness on the real-line}
\label{app:WP}

In this appendix, on the real-line, we use the smoothing effect due to dispersion to show that when $\be \le \frac{\al-1}2$,
the Cauchy problem \eqref{dNLS} is well-posed in a Sobolev space.

\begin{theorem}
\label{thm:WP}
Let $\al, \be , s \in \R$ satisfy
\begin{equation}
\al >1, \quad
0\le \be \le \frac{\al-1}2, \quad
s>
\max \Big(
\be + \frac 12,
\frac \al 4
\Big)
.
\label{eq:WP1}
\end{equation}
Then,
the Cauchy problem
\eqref{dNLS} is well-posed in $H^s(\R)$.
\end{theorem}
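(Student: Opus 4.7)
The strategy is the standard Kenig--Ponce--Vega contraction argument applied to the Duhamel formulation
\[
u(t) = e^{-itD^\al}\phi + \int_0^t e^{-i(t-t')D^\al}\bigl(u\, D^\be u\bigr)(t')\,dt',
\]
carried out in a resolution space that combines the energy norm with the local smoothing and maximal function norms supplied by Theorems \ref{thm:locs} and \ref{thm:KVP2}. Specifically, for $T \in (0,1]$ I would introduce
\[
\|u\|_{X_T} := \|u\|_{L^\infty_T H^s_x} + \bigl\|D^{s+\frac{\al-1}{2}} u\bigr\|_{L^\infty_x L^2_T} + \|D^s u\|_{L^2_x L^\infty_T},
\]
and seek a fixed point in a closed ball of $X_T$ of radius comparable to $\|\phi\|_{H^s}$.

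The linear step uses unitarity of $e^{-itD^\al}$ on $H^s$ together with the KPV local smoothing estimate (which gains exactly $\frac{\al-1}{2}$ derivatives in $L^\infty_x L^2_T$), its dual form, and the KPV maximal function estimate (valid for $s > \frac{\al}{4}$, which is part of the hypothesis). These combine to give $\|e^{-itD^\al}\phi\|_{X_T} \les \|\phi\|_{H^s}$ together with Duhamel bounds whose right-hand sides involve $\|D^s(uD^\be u)\|_{L^1_x L^2_T}$ (via dual local smoothing) and $\|uD^\be u\|_{L^1_T H^s_x}$ (via the energy estimate).

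The core of the argument is the bilinear estimate
\[
\bigl\|D^s(u\, D^\be u)\bigr\|_{L^1_x L^2_T} + \|u\, D^\be u\|_{L^1_T H^s_x} \les T^{\ta}\, \|u\|_{X_T}^2
\]
for some $\ta > 0$. Distributing the derivatives via a fractional Leibniz rule produces two extremal contributions. When the derivative weight falls on $D^\be u$, the total order is at most $s + \be \le s + \frac{\al-1}{2}$ by hypothesis, so this factor is controlled by the local smoothing component of $\|u\|_{X_T}$; the remaining undifferentiated factor $u$ is placed in $L^2_x L^\infty_T$ via the maximal function norm, and H\"older in the form $\|fg\|_{L^1_x L^2_T} \le \|f\|_{L^2_x L^\infty_T}\|g\|_{L^2_x L^2_T}$ together with Fubini $\|g\|_{L^2_x L^2_T} \le T^{1/2}\|g\|_{L^\infty_T L^2_x}$ closes the estimate. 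When the derivative weight instead falls on the bare factor $u$, one uses the maximal function norm of $D^s u$ and places $D^\be u$ in $L^\infty$ via the Sobolev embedding $H^{s-\be} \embed L^\infty$, for which $s > \be + \frac12$ is exactly what is needed.

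The main obstacle is this bilinear estimate: one must arrange for the $\frac{\al-1}{2}$ derivatives gained from local smoothing to absorb the $\be$ derivatives produced by the nonlinearity while keeping a positive power of $T$, and the restriction $\be \le \frac{\al-1}{2}$ is precisely the sharp threshold that makes this possible. The auxiliary conditions $s > \frac{\al}{4}$ and $s > \be + \frac12$ enter only as the natural regularity thresholds for the maximal function estimate and the Sobolev embedding used above. Once the bilinear estimate is in hand, applying it to the difference $u_1 D^\be u_1 - u_2 D^\be u_2$ gives Lipschitz dependence, and contraction on a sufficiently small time interval together with standard persistence arguments yields local well-posedness in $H^s(\R)$.
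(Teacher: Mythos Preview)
There is a genuine gap in the linear step. Your resolution space contains the component $\|D^s u\|_{L^2_x L^\infty_T}$, and you assert that $\|e^{-itD^\al}\phi\|_{X_T}\les\|\phi\|_{H^s}$. But the maximal function estimate of Theorem~\ref{thm:KVP2} \emph{loses} $\frac{\al}{4}+$ derivatives: applying it to $D^s\phi$ gives only
\[
\|D^s e^{-itD^\al}\phi\|_{L^2_x L^\infty_T}\les\|D^s\phi\|_{H^{\frac{\al}{4}+}}=\|\phi\|_{H^{s+\frac{\al}{4}+}},
\]
so this component is not controlled by $\|\phi\|_{H^s}$ for any $\al>0$. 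The same overshoot reappears in the Duhamel term, so the iteration does not close in $X_T$ as defined. The difficulty is not cosmetic: with the ordinary fractional Leibniz rule, the term $(D^s u)(D^\be u)$ is unavoidable, and placing $D^s u$ in $L^2_xL^\infty_T$ is exactly what forces the excess regularity.

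The paper circumvents this by two ingredients you do not use. First, it invokes the \emph{higher-order} Kato--Ponce expansion (Theorem~\ref{thm:fracLeib}), which replaces the crude splitting of $D^s(uD^\be u)$ by an expansion whose correction terms $(\dx^j u)(D^{s,j}D^\be u)$ and $(D^{s,k}u)(\dx^k D^\be u)$ put only a \emph{bounded} number of integer derivatives on the factor destined for the maximal norm; thus the maximal function is applied to $\dx^j u$ with $j<\be-\frac{\al-2}{4}$, for which $j+\frac{\al}{4}<\be+\frac12<s$ and Theorem~\ref{thm:KVP2} is legitimate. Second, the remainder in this expansion carries $D^{s+\frac{\al-2}{4}}u$ in $L^\infty_x$, which is handled by the Strichartz estimate (Theorem~\ref{thm:Str}) rather than by the maximal function. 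This is also why the paper splits into the cases $\be>\max(\frac{\al-2}{4},0)$ and $0<\be\le\frac{\al-2}{4}$, choosing different resolution spaces in each. Your scheme, using only local smoothing and the maximal estimate together with the standard Leibniz rule, cannot reproduce this balance.
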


The third condition in \eqref{eq:WP1} is probably not optimal, but we do not pursue optimal regularity here.

We use the following notation:
for $p,q \in [1,\infty]$ and $T>0$,
\[
\| f \|_{L_T^p L_x^q} := \| f \|_{L^p([0,T];  L^q(\R))},
\quad
\| f \|_{L_x^q L_T^p} := \| f \|_{L^q(\R;  L^p([0,T]))}.
\]
First, we recall the Strichartz estimate;
see Theorem 2.1 in \cite{KPV91}.

\begin{theorem}
\label{thm:Str}
Let $\al>1$.
Then, we have
\[
\| D^{\frac{\al-2}4} e^{-it D^\al} \phi \|_{L_t^4 L_x^\infty}
\les \| \phi \|_{L^2}
\]
for any $\phi \in L^2(\R)$.
\end{theorem}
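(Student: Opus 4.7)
The plan is to deduce the Strichartz bound from a pointwise dispersive decay estimate via the $TT^*$ method and the one-dimensional Hardy--Littlewood--Sobolev inequality. Set $Sf(t,x) := D^{(\al-2)/4} e^{-itD^\al} f(x)$. By duality and $TT^*$, the desired inequality $\|Sf\|_{L^4_tL^\infty_x} \les \|f\|_{L^2}$ is equivalent to
\[
\Bigl\|\int_\R D^{(\al-2)/2} e^{-i(t-s)D^\al} F(s,\cdot)\,ds \Bigr\|_{L^4_t L^\infty_x} \les \|F\|_{L^{4/3}_t L^1_x}.
\]
If I can show the pointwise dispersive estimate
\[
\bigl\| D^{(\al-2)/2} e^{-itD^\al} g \bigr\|_{L^\infty_x} \les |t|^{-1/2} \|g\|_{L^1_x} \qquad (t \neq 0),
\]
then inserting it under the time integral and applying HLS in one dimension (with indices $\tfrac14 = \tfrac34 - \tfrac12$) closes the $TT^*$ step.

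The next step is to prove the dispersive estimate. Writing $D^{(\al-2)/2} e^{-itD^\al} g = K_t \ast g$ with kernel
\[
K_t(x) = \frac{1}{2\pi} \int_\R |\xi|^{(\al-2)/2} e^{i(x\xi - t|\xi|^\al)}\,d\xi,
\]
it suffices to show $\|K_t\|_{L^\infty_x} \les |t|^{-1/2}$. The change of variable $\xi = |t|^{-1/\al} \eta$ yields the scaling identity $K_t(x) = |t|^{-1/2} K_{\sgn(t)}(x|t|^{-1/\al})$, because the Jacobian contributes $|t|^{-1/\al}$ and the weight contributes $|t|^{-(\al-2)/(2\al)}$, which sum to $-\tfrac12$. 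The problem then reduces to the uniform bound $\sup_{y \in \R} |K_{\pm 1}(y)| < \infty$.

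For this uniform bound on the model kernel I would dyadically decompose the $\eta$-integral. With phase $\Phi_y(\eta) := y\eta \mp |\eta|^\al$ one has $|\pd_\eta^2 \Phi_y(\eta)| = \al(\al-1) |\eta|^{\al-2}$ for $\eta \neq 0$. On shells $|\eta| \sim 2^j$ with $j \le 0$ the integrand is absolutely integrable because $(\al-2)/2 > -1$ (as $\al > 0$), contributing $O(1)$ in total. On shells $|\eta| \sim 2^j$ with $j > 0$, rescaling to unit scale and applying van der Corput's second-derivative lemma with parameter $\lambda = 2^{j\al}$ gives a per-shell bound of $2^{j(\al-2)/2} \cdot 2^j \cdot 2^{-j\al/2} = O(1)$ at the shell containing the stationary point $|\eta_0|^{\al-1} \sim |y|$, while non-stationary integration by parts yields rapid decay off this shell, so the dyadic sum converges uniformly in $y$.

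The main obstacle is the stationary-phase region, where both the weight $|\eta|^{(\al-2)/2}$ and the phase $|\eta|^\al$ are non-smooth at the origin for non-integer $\al$. One must work with smooth bump functions cutting $\eta$ into dyadic shells away from the origin and verify that the van der Corput gain $|\pd_\eta^2 \Phi_y(\eta_0)|^{-1/2} \sim |\eta_0|^{-(\al-2)/2}$ cancels the weight $|\eta_0|^{(\al-2)/2}$ uniformly in $y$. This cancellation is precisely what pins down the fractional-derivative exponent $(\al - 2)/4$ in Theorem~\ref{thm:Str} and explains the restriction $\al > 1$ (rather than merely $\al > 0$), since it is required for the non-degeneracy of $\pd_\eta^2 \Phi_y$ that underlies the van der Corput bound.
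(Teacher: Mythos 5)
The paper does not prove this theorem at all: it simply cites Theorem 2.1 of Kenig--Ponce--Vega \cite{KPV91}, so there is no in-paper proof to compare against. Your $TT^*$ reduction $\|SS^*F\|_{L^4_tL^\infty_x}\les\|F\|_{L^{4/3}_tL^1_x}$, followed by the pointwise dispersive bound $\|D^{(\al-2)/2}e^{-itD^\al}\|_{L^1\to L^\infty}\les|t|^{-1/2}$ and one-dimensional Hardy--Littlewood--Sobolev with indices $\tfrac14=\tfrac34-\tfrac12$, is the standard route and is essentially what that reference carries out. Your scaling computation is correct: the change of variables $\xi=|t|^{-1/\al}\eta$ produces the factor $|t|^{-1/\al}\cdot|t|^{-(\al-2)/(2\al)}=|t|^{-1/2}$ exactly, reducing matters to $\sup_y|K_{\pm1}(y)|<\infty$; and your dyadic analysis of $K_{\pm1}$ is also sound, with $\al>0$ giving absolute integrability near $\eta=0$, van der Corput giving $O(1)$ on the stationary shell $|\eta_0|^{\al-1}\sim|y|$ (the exponent cancellation $j(\al-2)/2+j-j\al/2=0$ is the heart of the matter and is exactly why the gradient gain is $(\al-2)/4$), and integration by parts giving summable decay off that shell.

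One small quibble with your closing remark: you attribute the restriction $\al>1$ to the non-degeneracy of $\pd_\eta^2\Phi_y$, but $\pd_\eta^2\Phi_y=\mp\al(\al-1)|\eta|^{\al-2}\sgn(\eta)$ is already non-vanishing for every $\al\neq1$, including $\al\in(0,1)$. What truly fails at $\al=1$ is that $\Phi''\equiv0$ on $\eta>0$ (linear phase, no dispersion whatsoever); for $\al\in(0,1)$ the degeneracy is of a different nature (the stationary point migrates to high frequency as $y\to0$ and the off-stationary decay / summability must be re-examined). Since the theorem assumes $\al>1$ and your argument is correct under that hypothesis, this only affects the informal commentary, not the proof.
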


We also use the local smoothing estimate;
see Theorem 4.1 in \cite{KPV91}.

\begin{theorem}
\label{thm:locs}
Let $\al>1$.
Then, we have
\[
\| D^{\frac{\al-1}2} e^{-it D^\al} \phi \|_{L_x^\infty L_t^2}
\les \| \phi \|_{L^2}
\]
for any $\phi \in L^2(\R)$.
\end{theorem}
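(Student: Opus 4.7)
The approach is a contraction-mapping argument in a resolution space tailored to the dispersive smoothing of $e^{-itD^\al}$. I would work in
\[
\|u\|_{X^s_T} := \|u\|_{L^\infty_T H^s_x} + \|u\|_{L^2_x L^\infty_T},
\]
with the two components tracking the energy and the maximal-function norm controlled by Theorem \ref{thm:KVP2} (valid for $s > \frac \al 4$). If needed, one can also include the local-smoothing norm $\|D^{s+\frac{\al-1}2} u\|_{L^\infty_x L^2_T}$ from Theorem \ref{thm:locs}. The free evolution $e^{-itD^\al}\phi$ is bounded in $X^s_T$ by $\|\phi\|_{H^s}$ via Plancherel and Theorem \ref{thm:KVP2}.

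The Duhamel term $Iu(t) := \int_0^t e^{-i(t-t')D^\al}(u D^\be u)(t') dt'$ is controlled in each norm by the dual of the corresponding linear estimate. In particular, the dual of Theorem \ref{thm:locs} yields
\[
\Big\|\int_0^T e^{-itD^\al} F(t',\cdot)\, dt'\Big\|_{L^2_x} \lesssim \|D^{-\frac{\al-1}2} F\|_{L^1_x L^2_T},
\]
so after applying $D^s$ the $L^\infty_T L^2$ norm of $Iu$ is controlled by $\|D^{s-\frac{\al-1}2}(u D^\be u)\|_{L^1_x L^2_T}$; a similar $TT^*$ composition handles the maximal-function component of $\|Iu\|_{X^s_T}$. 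Up to a routine energy contribution bounded by $\|u D^\be u\|_{L^1_T H^s}$, we thus reduce the closure of the fixed point to the bilinear estimate
\[
\|D^{s-\frac{\al-1}2}(u D^\be u)\|_{L^1_x L^2_T} + \|u D^\be u\|_{L^1_T H^s} \lesssim T^\theta \|u\|_{X^s_T}^2.
\]

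This bilinear estimate is the main obstacle. For the $L^1_T H^s$ piece, I would apply the fractional Leibniz (Kato--Ponce) rule and use the Sobolev embedding $H^s \hookrightarrow W^{\be,\infty}$ --- which holds precisely for $s > \be + \frac 12$ --- to place $D^\be u$ in $L^\infty_x$; the other factor is then in $H^s$, and integrating in $t$ contributes a factor of $T$. For the $L^1_x L^2_T$ piece, I would again distribute $D^{s-\frac{\al-1}2}$ by fractional Leibniz. The hypothesis $\be \le \frac{\al-1}2$ is precisely what closes the estimate: after distributing, the highest derivative on any single factor is $s + \be - \frac{\al-1}2 \le s$, so the high-derivative factor lies in $L^\infty_T H^s$. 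Pairing the two factors in $L^1_x L^2_T$ via the H\"older splitting $L^2_x \cdot L^2_x$, one factor goes into $L^2_x L^\infty_T$ (controlled by the maximal-function norm, requiring $s > \frac \al 4$) and the other into $L^2_x L^2_T = L^2_T L^2_x$ (controlled by $T^{1/2}\|u\|_{L^\infty_T H^s}$).

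Once the bilinear estimate is in place, $\Phi(u) := e^{-itD^\al}\phi + Iu$ is a contraction on a small ball in $X^s_T$ for sufficiently small $T$, giving a unique fixed point. Uniqueness and Lipschitz dependence on the initial data follow from the parallel difference estimate on $u - v$. The main delicacy lies in the case analysis for the bilinear estimate, where all three hypotheses $\be \le \frac{\al-1}2$, $s > \be + \frac 12$, and $s > \frac \al 4$ play essential and distinct roles in the derivative bookkeeping.
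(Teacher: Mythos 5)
You have proved the wrong statement. Theorem~\ref{thm:locs} is a \emph{linear} smoothing estimate for the free propagator $e^{-itD^\al}$: it says that, after applying $D^{(\al-1)/2}$, the free solution with an $L^2$ datum lands in $L^\infty_x L^2_t$. What you wrote is a sketch of the \emph{nonlinear} well-posedness Theorem~\ref{thm:WP} by contraction mapping in a resolution space $X^s_T$. That is a different theorem, and — worse, from a logical standpoint — your argument explicitly \emph{uses} Theorem~\ref{thm:locs} and its dual as black boxes (``the dual of Theorem~\ref{thm:locs} yields\ldots''), so it cannot possibly serve as a proof of it; the reasoning is circular. A contraction-mapping scheme has nothing to say about the boundedness of a fixed linear operator.

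The actual proof of Theorem~\ref{thm:locs} (Theorem 4.1 of Kenig–Ponce–Vega \cite{KPV91}, which the paper simply cites) is a short Plancherel computation. For fixed $x$ one writes
\[
D^{\frac{\al-1}2} e^{-itD^\al}\phi(x)
= \frac{1}{\sqrt{2\pi}} \int_\R |\xi|^{\frac{\al-1}2} e^{ix\xi - it|\xi|^\al}\,\ft\phi(\xi)\,d\xi,
\]
splits into $\xi>0$ and $\xi<0$, and on each half-line substitutes $\eta=|\xi|^\al$. The Jacobian $d\xi = \al^{-1}\eta^{1/\al-1}d\eta$ exactly cancels the weight $|\xi|^{\frac{\al-1}2}=\eta^{\frac{\al-1}{2\al}}$ up to a factor $\eta^{-\frac{\al-1}{2\al}}$, turning the integral into a Fourier transform in $t$ of the function $\eta\mapsto \al^{-1}\eta^{-\frac{\al-1}{2\al}} e^{\pm ix\eta^{1/\al}}\ft\phi(\pm\eta^{1/\al})$. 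Plancherel in $t$ then gives, uniformly in $x$,
\[
\|D^{\frac{\al-1}2} e^{-itD^\al}\phi(x)\|_{L^2_t}^2 \lesssim \int_0^\infty \eta^{-\frac{\al-1}\al}\bigl(|\ft\phi(\eta^{1/\al})|^2+|\ft\phi(-\eta^{1/\al})|^2\bigr)\,d\eta = \al^{-1}\|\ft\phi\|_{L^2}^2,
\]
after undoing the substitution. Taking the supremum over $x$ finishes the argument; no nonlinear analysis, no maximal function, no Leibniz rule is involved.
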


In addition, we use the maximal function estimate;
see Corollary 2.9 in \cite{KPV91b}.

\begin{theorem}
\label{thm:KVP2}
Let $\al>1$, $s>\frac \al4$, and $T \in (0,1)$.
Then,
we have
\[
\| e^{-it D^\al} \phi \|_{L_x^2 L_T^\infty}
\les \| \phi \|_{H^s}
\]
for any $\phi \in H^s(\R)$.
\end{theorem}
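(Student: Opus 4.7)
The argument follows the Kenig-Ponce-Vega template: a Littlewood-Paley reduction to a frequency-localized maximal estimate, linearization of the supremum in time, and a $TT^\ast$ argument whose kernel is analyzed by stationary phase. First, decompose $\phi = P_{\le 1}\phi + \sum_{N\in 2^{\N}} P_N\phi$ using a smooth Littlewood-Paley partition of unity. The low-frequency piece is absorbed by the trivial Sobolev-in-time estimate $\|e^{-itD^\al}P_{\le 1}\phi\|_{L_x^2 L_T^\infty}\les T^{1/2}\|\phi\|_{L^2}\les\|\phi\|_{L^2}$, since $T<1$. For each dyadic $N\ges 1$, the target is the sharp dyadic estimate
\[
\|e^{-itD^\al}P_N\phi\|_{L_x^2 L_T^\infty}\les N^{\al/4}\|P_N\phi\|_{L^2},
\]
which after summation in $N$ with Cauchy-Schwarz yields the bound in $H^s$ for any $s>\al/4$.

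To prove this dyadic bound I linearize the $L_T^\infty$-norm: for a measurable selector $t:\R\to[0,T]$, set $Sf(x):=e^{-it(x)D^\al}f(x)$, and aim for $\|SP_N\|_{L^2\to L^2}\les N^{\al/4}$ uniformly in $t(\cdot)$. By $TT^\ast$ duality this is equivalent to $\|SP_N S^\ast\|_{L^2\to L^2}\les N^{\al/2}$, where $SP_NS^\ast$ is the integral operator with kernel $\mathcal K(x,y)=K_N(x-y,\,t(x)-t(y))$ and
\[
K_N(z,\tau):=\int|\psi_N(\xi)|^2 e^{iz\xi-i\tau|\xi|^\al}\,d\xi,
\]
with $\psi_N$ a smooth bump on $|\xi|\sim N$.

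The core technical step is the pointwise kernel bound
\[
|K_N(z,\tau)|\les \frac{N}{(1+N^\al|\tau|)^{1/2}}
\]
together with rapid decay of $K_N(z,\tau)$ outside the tube $|z|\sim N^{\al-1}|\tau|$. This is obtained by rescaling $\xi=N\eta$, writing the phase as $N^\al\tau\cdot\bigl(z\eta/(N^{\al-1}\tau)-|\eta|^\al\sgn(\tau)\bigr)$, and invoking stationary phase with large parameter $N^\al|\tau|$; the critical point falls in the support of $\psi_1$ precisely when $z/(N^{\al-1}\tau)\sim 1$, and outside this tube one integrates by parts. The final step is to convert this pointwise information into the $L^2\to L^2$ bound on $SP_NS^\ast$, combining a support restriction in the space variable with a fractional-integration estimate in $\tau$ (arising from the $(N^\al|\tau|)^{-1/2}$ factor).

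The main obstacle will be precisely this last conversion. The naive Schur test, applied with only the size bounds on $K_N$, yields merely $\|SS^\ast\|_{L^2\to L^2}\les N^\al T$, which corresponds to the weaker threshold $s>\al/2$ (the same bound one gets from Sobolev-in-time alone). Gaining the extra factor $N^{\al/2}$ requires exploiting the oscillation in $K_N$, either by a $TT^\ast TT^\ast$ iteration or by a Hardy-Littlewood-Sobolev type bound in the $\tau$-variable. A secondary subtlety is uniformity in the selector $t(\cdot)$: since $t$ is arbitrary measurable, any kernel estimate must depend only on the pair $(x-y,t(x)-t(y))$ and not on the pointwise values of $t$, which is why the $TT^\ast$ framework is essential.
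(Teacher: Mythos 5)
The paper does not prove Theorem~\ref{thm:KVP2}: it is quoted from Corollary~2.9 of \cite{KPV91b}, with the remark that the proof given there for $\al\ge 2$ carries over to $\al>1$. So there is no in-paper argument to match; the comparison is with the Kenig--Ponce--Vega argument your outline reconstructs, and the skeleton you lay out (Littlewood--Paley reduction to a single dyadic block, linearization of the supremum in time by a selector $t(\cdot)$, $TT^\ast$, stationary phase on the kernel $K_N$) is the right one.

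The step you flag as the ``main obstacle,'' however, is not a genuine obstruction, and no $TT^\ast TT^\ast$ iteration or Hardy--Littlewood--Sobolev device is needed. The Schur test closes, provided you use \emph{both} of the kernel facts you already wrote down simultaneously: the size bound $|K_N(z,\tau)|\les N(1+N^\al|\tau|)^{-1/2}$ \emph{and} the concentration of $K_N$ on the tube $|z|\sim N^{\al-1}|\tau|$, with rapid decay off it. For fixed $x$, write the Schur integral as $\int_\R|K_N(z,\tau(z))|\,dz$ with $\tau(z)=t(x)-t(x-z)$. The off-tube part contributes $O(1)$ by the rapid decay. On the tube one has $|\tau(z)|\ges c\,|z|\,N^{1-\al}$, so the size bound degenerates to $|K_N(z,\tau(z))|\les (N/|z|)^{1/2}$ for $|z|\ge N^{-1}$ (and $|K_N|\les N$ for $|z|\le N^{-1}$, contributing $O(1)$); moreover $|\tau(z)|\le T$ forces $|z|\les N^{\al-1}T$, giving
\[
\int_{N^{-1}\le|z|\les N^{\al-1}T}\Big(\frac{N}{|z|}\Big)^{1/2}dz
\les N^{1/2}\big(N^{\al-1}T\big)^{1/2}
= N^{\al/2}T^{1/2}.
\]
This is exactly the extra factor $N^{\al/2}$ you were looking for; hence $\|SP_NS^\ast\|_{L^2\to L^2}\les N^{\al/2}$ and $\|e^{-itD^\al}P_N\phi\|_{L^2_xL^\infty_T}\les N^{\al/4}\|P_N\phi\|_{L^2}$, uniformly in the measurable selector. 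The naive $\|SS^\ast\|\les N^\al T$ that worried you comes from discarding the $|\tau|^{-1/2}$ gain (using only $|K_N|\les N$); keeping it is what buys the improvement. One point worth making explicit in the stationary-phase step is $\pd_\xi^2|\xi|^\al=\al(\al-1)|\xi|^{\al-2}\neq 0$ on $|\xi|\sim N$, which requires $\al>1$ and is precisely the observation behind the appendix's remark that the $\al\ge 2$ argument of \cite{KPV91b} extends to all $\al>1$.
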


Although Corollary 2.9 in \cite{KPV91b} is proved for $\al \ge 2$,
the same argument applies for $\al>1$.

We also use the higher-order fractional Leibniz rule;
see Theorem 1.2 (2) in \cite{Li19}.
We denote by $D^{s,k}$ the Fourier multiplier with the corresponding symbol
\[
(-i \pd_\xi)^k (|\xi|^s)
\]
for $s>0$ and $k \in \N_0$.

\begin{theorem}
\label{thm:fracLeib}
Let $s>0$.
For any $s_1, s_2 \ge 0$ with $s_1+s_2=s$
and any Schwartz functions $f,g$,
we have
\begin{align*}
&\bigg\|
D^s (fg)
- \sum_{\substack{j \in \N_0 \\ j < s_1}} \frac 1{j!} (\dx^j f) (D^{s,j} g)
- \sum_{\substack{k \in \N_0 \\ k \le s_2}} \frac 1{k!} (D^{s,k} f) (\dx^k g)
\bigg\|_{L^2}
\\
&\les
\| D^{s_1} f \|_{L^2} \| D^{s_2} g \|_{L^\infty}.
\end{align*}
\end{theorem}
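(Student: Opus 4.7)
The plan is to work in Fourier variables and Taylor expand the multiplier $|\xi|^s$ along the convolution representation $\Ft[D^s(fg)](\xi) = |\xi|^s \int_\R \ft f(\xi-\eta)\ft g(\eta)\,d\eta$, writing $\xi = (\xi-\eta) + \eta$ and expanding about whichever of the two summands is small. I would first perform a Littlewood--Paley paraproduct decomposition $fg = \Pi_{\mathrm{hl}}(f,g) + \Pi_{\mathrm{lh}}(f,g) + \Pi_{\mathrm{hh}}(f,g)$, where $\Pi_{\mathrm{hl}}$ restricts the convolution to $|\xi-\eta| \gg |\eta|$ (so that $f$ carries the high frequency), $\Pi_{\mathrm{lh}}$ to the symmetric regime $|\eta| \gg |\xi-\eta|$, and $\Pi_{\mathrm{hh}}$ to comparable frequencies. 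These restrictions ensure that the subsequent Taylor expansions of $|\xi|^s$ stay away from the singularity at the origin.

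On $\Pi_{\mathrm{hl}}$ I would Taylor expand $|\xi|^s = |(\xi-\eta)+\eta|^s$ in $\eta$ about $0$ up to order $s_2$. Using $\eta^k \ft g(\eta) = (-i)^k \Ft[\partial_x^k g](\eta)$ together with the definition $D^{s,k} = \Ft^{-1}(-i\partial_\xi)^k(|\xi|^s)\Ft$, the polynomial part of the expansion reproduces exactly $\sum_{k \le s_2}\frac{1}{k!}(D^{s,k}f)(\partial_x^k g)$ after Fourier inversion. Symmetrically, on $\Pi_{\mathrm{lh}}$ the expansion in $\xi-\eta$ about $0$ yields $\sum_{j < s_1}\frac{1}{j!}(\partial_x^j f)(D^{s,j}g)$. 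The slight asymmetry of the index ranges ($j < s_1$ versus $k \le s_2$) reflects how the borderline integer case is handled: on the $f$-side, letting $j$ reach $\lfloor s_1\rfloor$ would saturate the available $L^2$ regularity of $f$, so the corresponding term must be kept inside the remainder.

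To bound the remainders, note that in each dyadic paraproduct region the error multiplier has the integral representation $\frac{1}{s_2!}\int_0^1(1-\tau)^{s_2}\eta^{s_2+1}\partial^{s_2+1}_\zeta(|\zeta|^s)\bigl|_{\zeta=(\xi-\eta)+\tau\eta}\,d\tau$ on $\Pi_{\mathrm{hl}}$ and the mirror version on $\Pi_{\mathrm{lh}}$. The paraproduct support forces $|\zeta| \sim |\xi-\eta|$ throughout the integral, so the symbol is smooth and satisfies $\bigl|\partial^{s_2+1}_\zeta|\zeta|^s\bigr| \les |\zeta|^{s_1-1}$; crucially, the product is weighted by $|\eta|/|\zeta| \ll 1$. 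A Coifman--Meyer type bilinear multiplier estimate applied annulus by annulus, together with the standard Littlewood--Paley square function summation, delivers the bound $\les \|D^{s_1} f\|_{L^2}\|D^{s_2}g\|_{L^\infty}$. The diagonal piece $\Pi_{\mathrm{hh}}$ requires no Taylor expansion and is disposed of directly by Bernstein's inequality combined with frequency localization.

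The main obstacle is the precise bookkeeping at the endpoint cases when $s_1$ or $s_2$ is an integer. The derivatives $\partial^k_\zeta|\zeta|^s$ fail to be smooth at the origin and their pointwise bounds degenerate as $k$ approaches $s$ from below, so one must either halt the expansion one step early on the $f$-side (explaining the strict inequality $j < s_1$) or rewrite the remainder using the fundamental theorem of calculus at fractional order to preserve the Coifman--Meyer structure. Reconciling these two conventions so that the polynomial subtractions on the left-hand side match the remainder estimate uniformly for all admissible pairs $(s_1, s_2)$ with $s_1+s_2=s$ is where the technical weight of the argument lies.
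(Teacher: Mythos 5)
The paper does not prove Theorem \ref{thm:fracLeib} at all: it is imported verbatim as Theorem~1.2(2) of D.~Li, \emph{On Kato--Ponce and fractional Leibniz}, Rev.~Mat.~Iberoam.~\textbf{35} (2019), and the text immediately preceding the statement says exactly this. There is therefore no internal proof to compare your attempt against.

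That said, your outline follows the same strategy that Li actually uses: paraproduct decomposition into $\Pi_{\mathrm{hl}}$, $\Pi_{\mathrm{lh}}$, $\Pi_{\mathrm{hh}}$; Taylor expansion of the symbol $|\xi|^s$ about the dominant frequency in the two off-diagonal pieces so that the polynomial terms reproduce the two correction sums after identifying $\eta^k \ft g(\eta)$ with $\Ft[\pd_x^k g]$ and $(-i\pd_\xi)^k|\xi|^s$ with the symbol of $D^{s,k}$; an integral-form remainder estimated by a bilinear multiplier theorem; and a direct treatment of the diagonal piece. So the method is right and matches the cited source. Two places in your sketch need more than you say. First, the claim that the remainder is handled by ``Coifman--Meyer type bilinear multiplier estimate, annulus by annulus, plus square-function summation'' is not quite enough at the $L^2\times L^\infty\to L^2$ endpoint: the $L^\infty$ factor on $g$ prevents the usual square-function characterization, and the vanilla Coifman--Meyer theorem is stated for $p_2<\infty$; Li devotes specific effort to this endpoint (via vector-valued maximal function/Carleson-type arguments). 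You would need to address that, not just invoke Coifman--Meyer. Second, the $\Pi_{\mathrm{hh}}$ piece is not entirely ``without Taylor expansion'': the subtracted sums $\sum_{j<s_1}\frac1{j!}(\pd_x^j f)(D^{s,j}g)$ and $\sum_{k\le s_2}\frac1{k!}(D^{s,k}f)(\pd_x^k g)$ still have high-high contributions of their own, which need to be estimated term by term in that regime rather than ignored. Neither gap is fatal---both are handled in Li's paper by exactly the devices one would expect---but as written your argument is a correct roadmap rather than a complete proof.
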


\begin{proof}[Proof of Theorem \ref{thm:WP}]
We consider the corresponding integral equation to \eqref{dNLS}:
\[
u(t)
= e^{-it D^\al} \phi
+ \int_0^t e^{-i (t-t') D^\al} (u D^\be u) (t') dt'.
\]
We will prove that
\begin{equation}
\G [u] (t)
:=
e^{-it D^\al} \phi
+ \int_0^t e^{-i (t-t') D^\al} (u D^\be u) (t') dt'
\label{eq:Ga1}
\end{equation}
is a contraction mapping on an appropriate space.
Since the $\be =0$ is straightforward,
we only consider the case $\be >0$.
We divide the proof into two cases:
$\be > \max (\frac{\al-2}4, 0)$ and $0 <\be \le \frac{\al-2}4$.

(i) The case $\be > \max (\frac{\al-2}4, 0)$:
It follows from \eqref{eq:WP1} that
\[
s-\be+\frac{\al-2}4
\in (0,s).
\]
For $T \in (0,1)$,
define
\begin{equation}
\begin{aligned}
\| u \|_{X_T^s}
&:=
\| u \|_{L_T^\infty H^s_x}
+ \| D^{s+\frac{\al-2}4} u \|_{L_T^4 L_x^\infty}
\\
&\quad
+
\sum_{\substack{j \in \N_0 \\ j<\be-\frac{\al-2}4}}
\big( \| \dx^j u \|_{L_x^2 L_T^\infty}
+
\| D^{s,j} D^\be u \|_{L_x^\infty L_T^2} \big)
\\
&\quad
+
\sum_{\substack{k \in \N_0 \\ k \le s-\be+\frac{\al-2}4}}
\big(
\| D^{s,k} u \|_{L_T^\infty L_x^2}
+
\| \dx^k D^\be u \|_{L_T^4 L_x^\infty}
\big).
\end{aligned}
\label{XsT1}
\end{equation}

When $0 \le j<\be-\frac{\al-2}4$,
it follows from \eqref{eq:WP1} that
\begin{align*}
&j+ \frac \al 4
< \be + \frac 12
<s,
\\
&
s-j+\be - \frac{\al-1}2
> s - \frac \al 4 >0,
\\
&-j+ \be - \frac{\al-1}2
\le
\be - \frac{\al-1}2
\le 0.
\end{align*}
In addition,
when $0 \le k \le s-\be+\frac{\al-2}4$,
it follows from \eqref{eq:WP1} that
\begin{align*}
&
0<
\be - \frac{\al-2}4
\le
k+ \be - \frac{\al-2}4
\le
s.
\end{align*}
Hence,
Theorems \ref{thm:Str}, \ref{thm:locs}, and \ref{thm:KVP2} with \eqref{eq:Ga1} and \eqref{XsT1} yield that
\begin{align*}
\| \G [u] \|_{X_T^s}
&\les
\| \phi \|_{H^s}
+
\| u D^\be u \|_{L_T^1 L_x^2}
+
\| D^s (u D^\be u) \|_{L_T^1 L_x^2}
\\
&\les
\| \phi \|_{H^s}
+
T^{\frac 12}
\| u D^\be u \|_{L_{T,x}^2}
+
T^{\frac 12}
\| D^s (u D^\be u) \|_{L_{T,x}^2}.
\end{align*}

Sobolev's embedding yields that
$H^{s_0}(\R) \hookrightarrow L^\infty(\R)$
for $s_0>\frac 12$.
By choosing $s_0 \in (\frac 12 ,s-\be)$,
we have
\[
\| D^{\be} u \|_{L_{T,x}^\infty}
\les
\| D^{\be} u \|_{L_T^\infty H_x^{s_0}}
\les
\| u \|_{L_T^\infty H_x^s}.
\]
Then,
H\"older's inequality with \eqref{XsT1} implies that
\[
\| u D^\be u \|_{L_{T,x}^2}
\le
T^{\frac 12}
\| u \|_{L_T^\infty L_x^2}
\| D^\be u \|_{L_{T,x}^\infty}
\les
T^{\frac 12}
\| u \|_{X_T^s}^2.
\]
Theorem \ref{thm:fracLeib} with \eqref{XsT1} yields that
\begin{align*}
&\bigg\| D^s (u D^\be u)
-
\sum_{\substack{j \in \N_0 \\ j<\be-\frac{\al-2}4}} \frac 1{j!} (\dx^j u) (D^{s,j} D^\be u)
\\
&\hspace*{70pt}
-
\sum_{\substack{k \in \N_0 \\ k \le s-\be+\frac{\al-2}4}} \frac 1{k!} (D^{s,k} u) (\dx^k D^\be u)
\bigg\|_{L_{T,x}^2}
\\
&\les
T^{\frac 14}
\| D^{\be-\frac{\al-2}4} u \|_{L_T^\infty L_x^2}
\| D^{s+\frac{\al-2}4} u \|_{L_T^4L_x^\infty}
\\
&\les
T^{\frac 14}
\| u \|_{X_T^s}^2
.
\end{align*}
For $j <\be-\frac{\al-2}4$,
H\"older's inequality with \eqref{XsT1} yields that
\[
\| (\dx^j u) (D^{s,j} D^\be u) \|_{L_{T,x}^2}
\le
\| \dx^j u \|_{L_x^2 L_T^\infty} \| D^{s,j} D^\be u \|_{L_x^\infty L_T^2}
\le
\| u \|_{X_T^s}^2.
\]
For $k \le s-\be+\frac{\al-2}4$,
H\"older's inequality with \eqref{XsT1} yields that
\[
\| (D^{s,k} u) (\dx^k D^\be u) \|_{L_{T,x}^2}
\le
T^{\frac 14}
\| D^{s,k} u \|_{L_T^\infty L_x^2} \| \dx^k D^\be u \|_{L_T^4 L_x^\infty}
\le
T^{\frac 14}
\| u \|_{X_T^s}^2.
\]
Combining estimates above,
we obtain that
\[
\| \G [u] \|_{X_T^s}
\le
C_0
\| \phi \|_{H^s}
+
C_1
T^{\frac 12}
\| u \|_{X_T^s}^2,
\]
where $C_0, C_1 >0$ are constants.

A similar calculation yields
the difference estimate:
\[
\| \G [u] - \G [v] \|_{X_T^s}
\le
C_2
T^{\frac 12}
( \| u \|_{X_T^s} + \| v \|_{X_T^s})
\| u-v \|_{X_T^s}
.
\]
Hence,
by the contraction mapping theorem,
there exists a solution to
\[
\G[u]=u
\]
in
\[
\{ u \in C([0,T]; H^s(\R)) \mid
\| u \|_{X_T^s} \le 2 C_0 \| \phi \|_{H^s} \}
\]
provided that we choose $0< T \ll \min (1, \| \phi \|_{H^s}^{-2})$.
The uniqueness and continuous dependence on the initial data then follow from a standard argument, and we omit the details.

(ii) The case $0 < \be \le \frac{\al-2}4$:
It follows from \eqref{eq:WP1} that
\[
s+\be- \frac{\al-2}4
\in (0,s].
\]
For $T \in (0,1)$,
define
\begin{equation}
\begin{aligned}
\| u \|_{\wt X_T^s}
&:=
\| u \|_{L_T^\infty H^s_x}
+
\sum_{\substack{j \in \N_0 \\ j<s+\be-\frac{\al-2}4}}
\big( \| \dx^j u \|_{L_T^\infty L_x^2}
+
\| D^{s,j} D^\be u \|_{L_T^4 L_x^\infty} \big)
.
\end{aligned}
\label{XsT2}
\end{equation}

When $0 \le j<s+\be-\frac{\al-2}4$,
it follows from \eqref{eq:WP1} that
\begin{align*}
&
s-j+\be - \frac{\al-2}4
>0,
\\
&-j+ \be - \frac{\al-2}4
\le
\be - \frac{\al-2}4
\le 0.
\end{align*}
Theorems \ref{thm:Str}, \ref{thm:locs}, and \ref{thm:KVP2} with \eqref{eq:Ga1} and \eqref{XsT2} yield that
\begin{align*}
\| \G [u] \|_{\wt X_T^s}
&\les
\| \phi \|_{H^s}
+
\| u D^\be u \|_{L_T^1 L_x^2}
+
\| D^s (u D^\be u) \|_{L_T^1 L_x^2}
\\
&\les
\| \phi \|_{H^s}
+
T^{\frac 12}
\| u D^\be u \|_{L_{T,x}^2}
+
T^{\frac 12}
\| D^s (u D^\be u) \|_{L_{T,x}^2}.
\end{align*}

Theorem \ref{thm:fracLeib} with \eqref{XsT2} yields that
\begin{align*}
&\bigg\| D^s (u D^\be u)
-
\sum_{\substack{j \in \N_0 \\ j<s+\be-\frac{\al-2}4}} \frac 1{j!} (\dx^j u) (D^{s,j} D^\be u)
\\
&\hspace*{70pt}
-
\sum_{\substack{k \in \N_0 \\ k \le \frac{\al-2}4-\be}} \frac 1{k!} (D^{s,k} u) (\dx^k D^\be u)
\bigg\|_{L_{T,x}^2}
\\
&\les
T^{\frac 12}
\| D^{s+\be-\frac{\al-2}4} u \|_{L_T^\infty L_x^2}
\| D^{\frac{\al-2}4} u \|_{L_{T,x}^\infty}
\\
&\les
T^{\frac 12}
\| u \|_{\wt X_T^s}^2
.
\end{align*}
For $j<s+\be-\frac{\al-2}4$,
H\"older's inequality with \eqref{XsT2} yields that
\[
\| (\dx^j u) (D^{s,j} D^\be u) \|_{L_{T,x}^2}
\le
T^{\frac 14}
\| \dx^j u \|_{L_T^\infty L_x^2} \| D^{s,j} D^\be u \|_{L_T^4 L_x^\infty}
\les
\| u \|_{\wt X_T^s}^2.
\]
For $k \le \frac{\al-2}4-\be$,
H\"older's inequality and Sobolev's embedding with \eqref{XsT2} yields that
\[
\| (D^{s,k} u) (\dx^k D^\be u) \|_{L_{T,x}^2}
\le
T^{\frac 12}
\| D^{s,k} u \|_{L_T^\infty L_x^2} \| \dx^k D^\be u \|_{L_{T,x}^\infty}
\les
T^{\frac 12}
\| u \|_{\wt X_T^s}^2.
\]
Combining estimates above,
we obtain that
\[
\| \G [u] \|_{\wt X_T^s}
\le
\wt C_0
\| \phi \|_{H^s}
+ \wt C_1 T^{\frac 12}
\| u \|_{\wt X_T^s}^2,
\]
where $\wt C_0, \wt C_1>0$ are constants.
Since the remaining arguments are identical to those in case (i),
we omit the details.
\end{proof}

\mbox{}

\noindent
{\bf 
Acknowledgements.}
This work was
supported by JSPS KAKENHI Grant numbers
JP23K03182 and JP26K00613 and JST SPRING Grant Number JPMJSP2132.


\begin{thebibliography}{99}
\bibitem{BeTa06}
I. Bejenaru, T. Tao,
{\it Sharp well-posedness and ill-posedness results for a quadratic non-linear Schr\"odinger equation},
J. Funct. Anal. \textbf{233} (2006), no. 1, 228--259.

\bibitem{Chi02}
H. Chihara,
{\it
The initial value problem for Schr\"odinger equations on the torus},
Int. Math. Res. Not. \textbf{2002}, no. 15, 789--820.

\bibitem{Chr03}
M. Christ,
{\it Illposedness of a Schr\"odinger equation with derivative nonlinearity},
preprint
(\url{https://math.berkeley.edu/~mchrist/preprints.html}).


\bibitem{CGKO17}
J. Chung, Z. Guo, S. Kwon, T. Oh,
{\it Normal form approach to global well-posedness of the quadratic derivative nonlinear Schr\"odinger equation on the circle},
Ann. Inst. H. Poincar\'e C Anal. Non Lin\'eaire \textbf{34} (2017), no. 5, 1273--1297.


\bibitem{Gr00}
A. Gr\"unrock,
{\it On the Cauchy- and periodic boundary value problem for a certain class of derivative nonlinear Schr\"odinger equations},
arXiv:math/0006195.


\bibitem{HGM22}
B. Harrop-Griffiths, J. L. Marzuola,
{\it
Local well-posedness for a quasilinear Schr\"odinger equation with degenerate dispersion},
Indiana Univ. Math. J. \textbf{71} (2022), no. 4, 1585--1626.

\bibitem{HIT21}
H. Hirayama, M. Ikeda, T. Tanaka,
{\it
Well-posedness for the fourth-order Schr\"odinger equation with third order derivative nonlinearities},
NoDEA Nonlinear Differential Equations Appl. \textbf{28} (2021), no. 5, Paper No. 46, 72 pp.

\bibitem{KKO25}
T. K. Kato, T. Kondo, M. Okamoto,
{\it Well- and Ill-posedness of the Cauchy problem for derivative fractional nonlinear Schr\"odinger equations on the torus},
arXiv:2508.11866. 
 
\bibitem{KPV91}
C. E. Kenig, G. Ponce, L. Vega,
{\it Oscillatory integrals and regularity of dispersive equations},
Indiana Univ. Math. J. \textbf{40} (1991), no. 1, 33--69.

\bibitem{KPV91b}
C. E. Kenig, G. Ponce, L. Vega,
{\it Well-posedness of the initial value problem for the Korteweg-de Vries equation},
J. Amer. Math. Soc. \textbf{4} (1991), no. 2, 323--347.

\bibitem{KPV93}
C. E. Kenig, G. Ponce, L. Vega,
{\it Small solutions to nonlinear Schr\"odinger equations}
Ann. Inst. H. Poincar\'e C Anal. Non Lin\'eaire \textbf{10} (1993), no. 3, 255--288.


\bibitem{KPV04}
C. E. Kenig, G. Ponce, L. Vega,
{\it The Cauchy problem for quasi-linear Schr\"odinger equations},
Invent. Math. \textbf{158} (2004), no. 2, 343--388.


\bibitem{Kis19}
N. Kishimoto,
{\it
A remark on norm inflation for nonlinear Schr\"odinger equations}
Commun. Pure Appl. Anal. \textbf{18} (2019), no. 3, 1375--1402.

\bibitem{KiTs18}
N. Kishimoto, Y. Tsutsumi,
{\it Ill-posedness of the third order NLS equation with Raman scattering term},
Math. Res. Lett. \textbf{25} (2018), no. 5, 1447--1484.


\bibitem{KoOk25a}
T. Kondo, M. Okamoto,
{\it Norm inflation for a higher-order nonlinear Schr\"odinger equation with a derivative on the circle},
Partial Differ. Equ. Appl. \textbf{6} (2025), no. 2, Paper No. 11, 14 pp.

\bibitem{KoOk25}
T. Kondo, M. Okamoto,
{\it Well- and ill-posedness of the Cauchy problem for semi-linear Schr\"odinger equations on the torus},
to appear in Funkcial. Ekvac.

\bibitem{Li19}
D. Li,
{\it
On Kato-Ponce and fractional Leibniz},
Rev. Mat. Iberoam. \textbf{35} (2019), no. 1, 23--100.

\bibitem{LP02}
W. K. Lim, G. Ponce,
{\it
On the initial value problem for the one dimensional quasi-linear Schr\"odinger equations},
SIAM J. Math. Anal. \textbf{34} (2002), no. 2, 435--459.



\bibitem{MaOk16}
S. Machihara, M. Okamoto,
{\it Sharp well-posedness and ill-posedness for the Chern-Simons-Dirac system in one dimension},
Int. Math. Res. Not. IMRN 2016, no. \textbf{6}, 1640--1694.

\bibitem{Mizo85}
S. Mizohata,
{\it On the Cauchy problem},
Notes Rep. Math. Sci. Engrg., 3
Academic Press, Inc., Orlando, FL; Science Press Beijing, Beijing, 1985, vi+177 pp.


\bibitem{Mizu06}
R. Mizuhara,
{\it
The initial value problem for third and fourth order dispersive equations in one space dimension},
Funkcial. Ekvac. \textbf{49} (2006), no. 1, 1--38.


\bibitem{NaWa25}
K. Nakanishi, B. Wang,
{\it Global wellposedness of general nonlinear evolution equations for distributions on the Fourier half space},
J. Funct. Anal. \textbf{289} (2025), no. 8, Paper No. 111004, 70 pp.

\bibitem{Oz98}
T. Ozawa, 
{\it Finite energy solutions for the Schr\"odinger equations with quadratic nonlinearity in one space dimension}, Funkcial. Ekvac. \textbf{41} (1998), no. 3, 451--468.

\bibitem{Por18}
D. Pornnopparath,
{\it Small data well-posedness for derivative nonlinear Schr\"odinger equations},
J. Differential Equations \textbf{265} (2018), no. 8, 3792--3840.


\bibitem{RLY18}
Y. Ren, Y. Li, W. Yan,
{\it Sharp well-posedness of the Cauchy problem for the fourth order nonlinear Schr\"odinger equation},
Commun. Pure Appl. Anal. \textbf{17} (2018), no. 2, 487--504.


\bibitem{Tar11}
S. Tarama,
{\it
$L^2$-well-posed Cauchy problem for fourth-order dispersive equations on the line},
Electron. J. Differential Equations \textbf{2011}, No. 168, 11 pp.

\end{thebibliography}
\end{document}